\theoremstyle{thmstyleone}%
\newtheorem{theorem}{Theorem}
\theoremstyle{thmstyletwo}%
\newtheorem{lemma}{Lemma}
\theoremstyle{thmstylethree}%
\newtheorem{definition}{Definition}%
\begin{document}

\title[Article Title]{Strongly exposed points in Orlicz-Lorentz spaces equipped with the Orlicz norm}

\author[1]{\fnm{Di} \sur{Wang}}\email{wangd267@mail2.sysu.edu.cn}

\author*[1]{\fnm{Yongjin} \sur{Li}}\email{stslyj@mail.sysu.edu.cn}


\affil[1]{\orgdiv{School of Mathematics}, \orgname{Sun Yat-sen University}, \orgaddress{\city{Guangzhou}, \postcode{510275}, \country{China}}}




\abstract{
	The criterion for a point in the unit ball to be a strongly exposed point is given. The necessity and sufficiency conditions for Orlicz-Lorentz spaces to possess strongly exposed property are given. 
	Besides, some useful methods are obtained to handle issues related to decreasing rearrangement.
}

\keywords{Orlicz space, Orlicz-Lorentz space, strongly extreme point, strongly exposed point, Strongly exposed proeprty}


\pacs[MSC Classification]{46E30, 46A80, 46B20}

\maketitle

\section{Introduction}\label{sec1}
~~~~
Orlicz-Lorentz space $\Lambda_{\varphi, \omega}$ is the generalization of Orlicz space $L_{\varphi}$ and Lorentz space $\Lambda_{\omega}^{p}$.
Both Orlicz space and Lorentz space are the generalization of $L_{p}$ space.
In this article we characterize the strongly exposed points, and give the necessary and sufficient condition for Orlicz-Lorentz function spaces $\Lambda_{\varphi, \omega}^{o}$ equipped with the Orlicz norm generated by arbitrary Orlicz function $\varphi$ and a decreasing weight $\omega$. Besides, according to the expression of Orlicz norm, we discuss the supporting functional of $x$ in Orlicz-Lorentz space $\Lambda_{\varphi, \omega}^{o}$ in two cases: $K(x)=\emptyset$ and $K(x)\neq \emptyset$, and obtain the description of supporting functionals at $x$.

Let $X$ be a Banach space. By $S(X)$ and $B(X)$ we denoted the unit sphere and unit ball of $X$, respectively. By $X^{*}$ and $X^{\prime}$ we denote the dual and K\"{o}the dual of $X$, respectively. Let $x\in S(X)$,
by $Grad(x)=\{f\in S(X^{*}): f(x)=\|x\|\}$ we denote the set of supporting functionals at $x$.
$x\in S(X)$ is called an exposed point of $B(X)$ if there exists a functional $f\in Grad(x)$, such that for $f(y)<1=f(x)$ for all $y\in B(X)\backslash \{x\}$.
$x\in S(X)$ is called an strongly exposed point of $B(X)$ if there exists $f\in Grad(x)$ such that for arbitrary $x_{n}\in B(X)$, $f(x_{n})\rightarrow f(x)=1$ implies $\|x_{n}-x\|\rightarrow 0\:(n\rightarrow \infty)$.
Obviously, a strongly exposed point is an exposed point.

Exposed point and strongly exposed point are important concepts in the theory of Banach space.  They have numerous application, such as in separation theory and control theory.
In 1963, J. Lindenstrauss introduced the conception of strongly exposed point in \cite{Lindenstrauss1963OOWATN}, and proved that every weakly compact convex set in a separable Banach space is the closed convex hull of its strongly exposed points(also see \cite{Troyanski1971OnLu}).
%
In 1974, R. R. Phelps showed that a Banach space E has the Radon-Nikodym Property(equivalently, every bounded subset of E is dentable) if and only if every bounded closed convex subset of E is the closed convex hull of its strongly exposed points\cite{Phelps1974}.
In 1976, J. Bourgain gave a purely geometric proof that an arbitrary convex and weakly-compact subset of $X$ is the closed convex hull of its strongly exposed points, and the set of the exposed functionals is a dense $G_{\delta}$ subset of the unit sphere
$S(X^{*})$\cite{Bourgain1976stronglyEP}.
Besides, in \cite{Greim1983StronglyEP} the necessary and sufficient conditions
for vector-valued $L_{p}$-functions to be strongly exposed were given.


From the definition of exposed point and strongly exposed point, the research of strongly exposed point in Orlicz-Lorentz space requires the results of dual space.

In 2002, H. Hudzik, H, A. Kami\'{n}ska, M. Masty\l{}o represented the Orlicz-Lorentz space in the form of Caldero\'{n}-Lozanovski$\breve{\i}$  space and gave the description of the k\"{o}the dual of the Orlicz-Lorentz space $\Lambda_{\varphi, \omega}$ generated by an Orlicz function and a regular weight by using the Lozanovski$\breve{\i}$ theorem on the representation of k\"{o}the dual space for the Caldero\'{n}-Lozanovski$\breve{\i}$ space. For more details please refer to \cite{Hudzik20021645}. It is also shown in this paper that the regularity condition on $\omega$ is necessity and sufficient for the coincidence of the Banach dual space and the described dual space.
In 2014, A. Kami\'{n}ska, K. Le\'{s}nik and
Y. Raynaud gave the K\"{o}the dual of
Orlicz-Lorentz function and sequence space
generated by $N$-function and a decreasing weight.
The proof process is quite different from \cite{Hudzik20021645}.
And The dual norm is given via the modular $P=\inf\{\int \varphi_{*}(\frac{f^{*}}{|g|})|g|: g\prec
\omega\}$ where $f^{*}$ is the decreasing rearrangement of $f$, $g\prec \omega$ means that $g$ is submajorized by $\omega$. Besides, The expression of the modular $P$ is simplified by using Halperin's level and inverse level function. Besides,
In 2019, A. Kami\'{n}ska and Y. Raynaud generalized the class of Orlicz-Lorentz spaces by replacing Orlicz sapces by general symmetric K\"{o}the function spaces,
In section 8 of \cite{Kaminska2019AbstractLS} the authors give the K\"{o}the dual of Orlicz-Lorentz space $\Lambda_{\varphi, \omega}$ generated by arbitrary Orlicz function $\varphi$ and a decreasing weight is $\mathcal{M}_{\psi, \omega}^{o}$.
The dual norm is also given.
For  more details please refer to \cite{Kaminska2019AbstractLS}.

This paper is organized as follows. In section 2 we introduce the definition and some basic properties of Orlicz-Lorentz space, and agree on some basic notation. In section 3 we introduce some auxiluary results. Section 4 is the main content of this article. in section 4.1, we give the description of $v\in (\Lambda_{\varphi, \omega}^{o})^{\prime}$ whose norm is attainable at $x$ with $K(x)=\emptyset$.

\nocite{Kaminska2013NewFF}

\section{Preliminary}
In this part we will introduce some basic definition and notation.
By $\mu_{f}(t)$ we denote the distribution function of $f$ defined by
$$
\mu_{f}(t):=\mu(\{s \in R^{+} : | f(s)|  > t\}), \;  t>0.
$$
For $f\in L_{0}$, its non-increasing rearrangement $f^*$ is defined by
$$
f^*(t):=\inf\{\lambda >0 : \mu_{f}(\lambda)\leq t \}, \;  t>0.
$$
A function $\omega: (0,\infty) \rightarrow (0, \infty)$ is called a weight function if it is a non-increasing, locally integrable with respect to the Lebesgue measure $\mu$ and
$$
\int_{0}^{\infty}\omega(t)dt=\infty.
$$
Define the function $W:R^{+}\rightarrow R^{+}$ by
$$
W(t):=\int_{0}^{t}\omega(s)ds, \quad t\geq0.
$$
The Lorentz space $\Lambda_{\omega}$ defined by
\begin{equation*}
\Lambda_{\omega}=\left\{ f\in L_{0}: \|f\|_{\Lambda_{\omega}}=\int_{R^{+}}f^{*}(t)\omega(t)dt=\int_{R^{+}}f^{*}(t)dW<\infty \right\},
\end{equation*}
and the Marcinkiewicz space $M_{W}$ is defined as
\begin{equation*}
M_{W}=\left\{ f\in L^{0}: \|f\|_{M_{W}}=\sup_{t\in R^{+}}\left( \frac{ \int_{0}^{t}f^{*}(s)ds}{W(t)} \right)<\infty \right\}.
\end{equation*}
Each one is the K\"{o}the dual of the other(see \cite{Bennett1953, Reisner1982Dual}). More result about Lorentz space please refer to \cite{Carro2007Recent}.

A function $\varphi:R\rightarrow R^{+}$ is said to be an Orlicz function\cite{Chen1996} if $\varphi$ is convex, $\varphi(0)=0$ and $\varphi(u)>0$ for all $u>0$. For arbitrary Orlicz function $\varphi$ we define its complementary function $\psi$ in the sense of Young by the formula
$$
\psi(v)=\sup_{u>0}\{| uv | -\varphi(u)\}
$$
for all $v>0$. Obviously, $\psi$ is an Orlicz function as well. Let $p(u)$ and $q(v)$ be the right-derivative of $\varphi$ and $\psi$, respectively. Both $p(u)$ and $q(v)$ are non-decreasing and satisfy:
$$
\varphi(u)=\int_{0}^{u}p(t)dt, \quad \psi(v)=\int_{0}^{v}q(s)ds \quad u\geq0, v\geq0.
$$
We recall that an Orlicz function $\varphi$ satisfies the $\Delta_{2}$-condition($\varphi\in\Delta_{2}$) if there exist $C>0$ such that $\varphi(2t)\leq C\varphi(t)$ for all $t>0$. We say $\varphi\in\nabla_{2}$ if its complementary function $\psi\in\Delta_{2}$.
Besides, $\varphi$ and $\psi$ satisfy Young Inequality:
$$
uv \leq \varphi(u)+ \psi(v), \quad u\geq0, v\geq0,
$$
and the equation $uv=\varphi(u)+\psi(v)$ hold if and only if $u\in [q_{-}(v)sign\:v,\;q(v)sign\:v]$ or $v\in [p_{-}(u)sign\: u,\; p(u)sign\: u]$, where
\begin{align*}
p_{-}(0)&=0\: \text{and}\: p_{-}(t)=sup\{p(s): 0\leq s <t\}\: \text{for}\: t>0.\\
q_{-}(0)&=0\: \text{and}\: q_{-}(t)=sup\{q(s): 0\leq s <t\}\: \text{for}\: t>0.
\end{align*}
For arbitrary Orlicz function $\varphi$ and a decreasing weight function $\omega$, the modular $\rho_{\varphi, \omega}$ is defined by
$$
\rho_{\varphi, \omega}(f)=\int_{0}^{\infty}\varphi(f^*(t))\omega(t)dt.
$$
The Orlicz-Lorentz function space $\Lambda_{\varphi, \omega}$ is defined by
$$
\Lambda_{\varphi, \omega}=\{f\in L_{0}: \rho_{\varphi, \omega}(\lambda f)=\int_{0}^{\infty}\varphi(\lambda f^*(t))\omega(t)dt < \infty ~for~some~ \lambda >0\}.
$$
It has been proven in \cite{Kaminska199029} that $\Lambda_{\varphi, \omega}$ becomes a Banach space under the Luxemburg norm
$$
\|f\|=\|f\|_{\varphi, \omega}=\inf\{\lambda>0: \rho_{\varphi, \omega}(\frac{f}{\lambda})\leq 1\}.
$$
Orlicz norm is introduced in \cite{Wu1999235}.
For any $f\in \Lambda_{\varphi, \omega}$, its Orlicz norm is defined by
$$
\|f\|^{o}=\|f\|_{\varphi, \omega}^{o}=\sup_{\rho_{\psi, \omega}(g)\leq 1}\int_{0}^{\infty}f^*(t)g^*(t)\omega(t)dt.
$$
It has been proven in theorem 2.5 of \cite{Wu1999235} that for arbitrary Orlicz function $\varphi$ and any non-increasing weight function $\omega$, the equality
\begin{equation}
\|f\|_{\varphi, \omega}^{o}=\inf_{k>0}{\frac{1}{k}\left(1+\rho_{\varphi, \omega}(kf)\right)}
\end{equation}
holds for every $f\in \Lambda_{\varphi, \omega}$.
By $E_{\varphi, \omega}$ we denote the subspace of $\Lambda_{\varphi, \omega}$ such that
$$
E_{\varphi, \omega}=\{f\in L_{0}: \rho_{\varphi, \omega}(\lambda f)=\int_{0}^{\infty}\varphi(\lambda f^*(t))\omega(t)dt < \infty ~for~all~ \lambda >0\}.
$$
For arbitrary $f\in \Lambda_{\varphi, \omega}$,
$\theta(f)$ is defined by
\begin{equation}
	\theta(f)=\inf\{\lambda>0: \rho_{\varphi, \omega}(\frac{f}{\lambda})<\infty\}.
\end{equation}
If $\varphi\in\Delta_{2}$, then $E_{\varphi, \omega}=\Lambda_{\varphi, \omega}$.
From now on, let $\Lambda_{\varphi, \omega}=(\Lambda_{\varphi, \omega}, \|\cdot\|_{\varphi, \omega})$ and $\Lambda_{\varphi, \omega}^{o}=(\Lambda_{\varphi, \omega}, \|\cdot\|_{\varphi, \omega}^{o})$
$\Lambda_{\varphi, \omega}^{o}$ is also a Banach space(see \cite{Wu1999235}, theorem 2.3).
 Orlicz-Lorentz spaces are resonant space\cite{Bennett1953}.
 \nocite{Ferreyra2019}\nocite{Ghosh2021}.

A Banach space $E=(E, \|\cdot\|_{E})$ is said to be a symmetric space if for $f \in L^{0}$, $g \in E$ and $\mu_{f}=\mu_{g}$, then $f \in E$ and $\|f\|_{E}=\|g\|_{E}$. Orlicz-Lorentz spaces are also symmetric spaces.
For more theory of symmetric space, we refer the reader to \cite{Smithies1963,Cerda1998311}.\\
$\sigma:(0,r)\rightarrow (0,r)$ $(r\leq \infty)$ is called a measure preserving transformation \cite{Bennett1953,Brudnyi19882009} if for every measurable set $E\subset (0,r)$, $\sigma^{-1}(E) $ is measurable and $\mu(\sigma^{-1}(E))=\mu(E)$.
It was given in corollary 7.6 of \cite{Bennett1953} that for a resonant measure space and for a nonnegative $\mu$-measurable function $f$, $f$ is defined on $R$ and satisfies $\lim_{t\rightarrow\infty}f^{*}(t)=0$. Then there exists a measure-preserving transformation $\sigma$ from the support of $f$ onto the support of $f^{*}$ such that $f=f^{*}\cdot\sigma\;\mu-a.e.$ on the support of $f$. More results about measure preserving transformation please refer to \cite{Ryff1970449}.

We say that $f\in L_{0}$ is submajorized by $g\in L_{0}$ and we write
$$
f\prec g \;whenever \;\int_{0}^{t}f^{*}(s)ds<\int_{0}^{t}g^{*}(s)ds, \; t>0.
$$
Notice that $\|f\|_{M_{W}}\leq 1$ if and only if $f\prec \omega$.
By $f\sim g$ we main $f^{*}=g^{*}$.


For Orlicz function $\varphi$, if $\varphi(t)$ is affine on the interval $(a,b)$, and it is not affine on either $(a-\varepsilon, b)$ or $(a, b+\varepsilon)$ for any $\varepsilon>0$. The interval $(a,b)$ is called an affine interval of $\varphi$, denoted by $AI(a, b)$.
Define the following sets
\begin{align*}
	A^{\prime}&=\bigcup\{a_{i}: AI(a_{i}, b_{i})\: \text{with}\: p_{-}(a_{i})=p(a_{i})\}.\\
	B^{\prime}&=\bigcup\{b_{i}: AI(a_{i}, b_{i})\:\text{with}\: p_{-}(a_{i})=p(a_{i})\}.\\
	       A\:&=\bigcup\{a_{i}: AI(a_{i}, b_{i})\:\text{with}\: p_{-}(a_{i})<p(a_{i})\}.\\
	       B\:&=\bigcup\{b_{i}: AI(a_{i}, b_{i})\:\text{with}\: p_{-}(a_{i})<p(a_{i})\}.
\end{align*}
For an Orlicz function $\varphi$, by $S$ we denoted the strict convex point of $\varphi$. Let $\{[a_{i}, b_{i}]\}_{i\in N}$ be the set of all AIs of $\varphi$. Obviously, $S=R^{+}\backslash \cup_{i}(a_{i}, b_{i})$.
By $S^{\prime}$ we denoted $R^{+}\backslash S$.
For a decreasing weight $\omega$, let $W(a,b)=\int_{a}^{b}\omega(t)dt$.
We say a weight function $\omega$ is regular if $\omega$ satisfies $W(2t)\geq K W(t)$ for any $t\in R^{+}$ where $K>1$ is independent of $t$.
If $\omega(t)$ is a constant in an interval $A$, and for any interval $B$ such that $ A\subsetneqq B$, $\omega(t)$ is not a constant when $t\in B$. Then $A$ is called a maximal constant interval of $\omega$. $L(\omega)$ is a set composed of the maximal constant intervals.

Then we introduce the definition of level interval and level function.

\begin{definition}\cite[Definition 3.1]{Halperin195305}
$(a_{1}, b_{1})$, with $a\leq a_{1}<b_{1}\leq b$, is called a level interval (of $v$ with respect to $\omega$), abbreviation $l.i.$, if for all $a_{1}<x<b_{1}$, $\omega(a_{1}, x)>0$ and $R(a_{1}, x)\leq R(a_{1}, b_{1})$. If the $l.i.$ is not cantained en a large $l.i.$, it is called maximal level interval, abbreviation $m.l.i.$
\end{definition}

\begin{definition}\cite[Definition 3.2]{Halperin195305}, \cite[Definition 4.2]{Kaminska2014229}
The level function of $f$ with respect to $\omega$ is denoted by $f^{0}$, and $f^{0}$ is defined by
\begin{equation*}
f^{0}(t)=\left\{
\begin{aligned}
&R(a_{n},b_{n})\omega(t) \;\;for\; t\in (a_{n}, b_{n}),\\
&\;\;\;\omega(t) \;\;\;\;\; otherwise.
\end{aligned}
\right.
\end{equation*}
The inverse level function of $\omega$ with respect to $f$ is defined by
\begin{equation*}
\omega^{f}(t) =
\left\{
\begin{aligned}
&\frac{f(t)}{R(a_{n}, b_{n})}\;\;\; for \;t\in (a_{n}, b_{n}),\\
&\;\;\omega(t) \;\;\;\; otherwise.
\end{aligned}
\right.
\end{equation*}
Where $(a_{n}, b_{n})$ is an enumeration of all maximal intervals of $f$ with respect to $\omega$, and $R(a_{n}, b_{n})=\frac{F(a_{n}, b_{n})}{W(a_{n}, b_{n})}$.
\end{definition}

\section{Auxiliary results}

\begin{lemma}\cite[Theorem 8.10, Theorem 8.8, Theorem, Theorem 6.11]{Kaminska2019AbstractLS}
Let $\omega$ be a decreasing weight and $\varphi$ be an arbitrary Orlicz function. \\
(1)
The k\"{o}the dual of Orlicz-Lorentz space
$\Lambda_{\varphi, \omega}^{o}$ is expressed as
\begin{align*}
(\Lambda_{\varphi, \omega}^{o})^{\prime}&=(\Lambda_{\varphi, \omega}, \|\cdot\|_{\psi, \omega}^{o})^{\prime}=(\mathcal{M}_{\psi, \omega}, \|\cdot\|_{\mathcal{M}_{\varphi, \omega}})=\mathcal{M}_{\psi, \omega}.
\end{align*}
with equality of corresponding norms, where
$$
\mathcal{M}_{\psi, \omega}=\{f\in L_{0}: P_{\psi, \omega}(\lambda f)<\infty \: \text{for some} \; \lambda>0\},
$$
and the modular $P$ is define by
$$
P_{\varphi, \omega}(f)=\inf\{\int_{R^{+}}\varphi(\frac{f^{*}}{|g|})|g|:g\prec \omega\}.
$$
The norm $\|\cdot\|_{\mathcal{M}_{\psi,\omega}}$
is defined by
\begin{align}
\|f\|_{(\Lambda_{\varphi, \omega}^{o})^{\prime}}
&=\|f\|_{\mathcal{M}_{\psi, \omega}}=\inf\left\{\lambda>0: \: P_{\psi, \omega}\left(\frac{f}{\lambda}\right)\leq 1\right\}.
\end{align}
If in addition we assume that $W(\infty)=\infty$, then we also have that
$$
P_{\psi, \omega}(f)
=\int_{R^{+}}\psi(\frac{(f^{*})^{0}}{\omega})\omega(t)dt
=\int_{R^{+}}\psi(\frac{f^{*}}{\omega^{f^{*}}})\omega^{f^{*}}(t)dt.
$$
(2) Let $\varphi$ satisfy $\Delta_{2}$ condition and $\int_{0}^{\infty}\omega(t)dt = W(\infty)=\infty$. Then the dual spaces $(\Lambda_{\varphi, \omega})^{*}$ and $(\Lambda_{\varphi, \omega}^{o})^{*}$ are isometrically isomorphic to their corresponding K\"{o}the dual spaces. In fact for functional $\Phi\in (\Lambda_{\varphi, \omega})^{*}(resp., \Phi\in (\Lambda_{\varphi, \omega}^{o})^{*})$, there exists  $\phi\in \mathcal{M}_{\psi, \omega}^{o}(resp., \Phi\in(\Lambda_{\varphi, \omega}^{o})^{*})$ such that
$$
\Phi(f)=L_{\phi}(f)=\int_{0}^{\infty}f(t)\phi(t)dt,\;\; f\in \Lambda_{\varphi, \omega}.
$$
and $\|\Phi\|_{(\Lambda_{\varphi, \omega})^{*}}=\|\phi\|_{\mathcal{M}_{\psi, \omega}}^{o}\;(resp., \|\Phi\|_{(\Lambda_{\varphi, \omega}^{o})^{*}}=\|\phi\|_{\mathcal{M}_{\psi, \omega}})$.
\end{lemma}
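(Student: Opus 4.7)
The plan is to reconstruct the argument of Kamińska--Raynaud from which this statement is quoted, splitting the lemma cleanly into the Köthe dual description (part (1)) and the Banach dual representation (part (2)).

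For part (1), the starting point is the general associate-norm formula on a symmetric Banach function space over a resonant measure space:
$$\|f\|_{(\Lambda_{\varphi, \omega}^{o})'} = \sup\left\{\int_{R^+} |f(t) g(t)|\,dt : \|g\|_{\varphi, \omega}^{o} \leq 1\right\}.$$
By the Hardy--Littlewood inequality together with rearrangement invariance, one may restrict the supremum to nonincreasing $g$. The next step is to insert the Orlicz-norm formula $\|g\|_{\varphi, \omega}^{o}=\inf_{k>0}\frac{1}{k}(1+\rho_{\varphi,\omega}(kg))$ and apply Young's inequality on the $(\varphi,\psi)$ pair to dualize the integral. The constraint $g \prec \omega$ that appears in $P_{\psi,\omega}$ comes from Marcinkiewicz-type duality between $\Lambda_\omega$ and $M_W$: exactly the functions submajorized by $\omega$ parametrize the extremal weights that realize $\|g\|_{\varphi,\omega}^{o}$. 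A short inf-sup exchange then gives the modular expression $P_{\psi, \omega}(f) = \inf\{\int \psi(f^*/|g|)|g| : g \prec \omega\}$, and Luxemburg normalization yields the stated norm.

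For the simplification via the level function when $W(\infty)=\infty$, I would appeal to Halperin's theorem on the level and inverse level functions. The claim is that the infimum defining $P_{\psi,\omega}(f)$ is attained at $g=\omega^{f^*}$ (equivalently, the quotient $f^*/\omega$ is replaced by the piecewise-constant object $(f^*)^0/\omega$ on the maximal level intervals $(a_n,b_n)$). The proof of this attainment proceeds by an exchange lemma: on each maximal level interval, Jensen's inequality applied to the convex function $\psi$ and the averaging operator $R(a_n,b_n)=F(a_n,b_n)/W(a_n,b_n)$ shows that any competitor $g\prec\omega$ can only increase the integrand after one averages within $(a_n,b_n)$, while outside the level intervals one may take $g=\omega$ directly. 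Summing over intervals gives both forms of $P_{\psi,\omega}$.

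For part (2), I would show that $\varphi\in\Delta_2$ makes the modular $\rho_{\varphi,\omega}$ order continuous, hence $\Lambda_{\varphi,\omega}=E_{\varphi,\omega}$ is an order continuous symmetric Banach function space (and the same holds for $\Lambda_{\varphi,\omega}^{o}$ since the Luxemburg and Orlicz norms are equivalent). On such a space over a resonant measure space, the Luxemburg--Zaanen representation theorem identifies the Banach dual isometrically with the Köthe dual via the integral pairing $\Phi(f)=\int f\phi$, with $\|\Phi\|_{*}=\|\phi\|_{(\Lambda_{\varphi,\omega}^{o})'}$. Combining this with part (1) gives the asserted representation, and $W(\infty)=\infty$ ensures the level-function formula is in force.

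The hard part will be the level-function simplification of $P_{\psi,\omega}$: the variational problem is over the infinite-dimensional convex set $\{g:g\prec\omega\}$, and establishing that Halperin's inverse level function is the unique (up to measure-preserving rearrangements) minimizer requires both a delicate rearrangement argument on the maximal level intervals and a justification that the improvement lemma iterates to an actual infimum. The remaining pieces—associate-norm duality, $\Delta_2$-order continuity, and integral representation—are by now standard in the theory of rearrangement-invariant spaces.
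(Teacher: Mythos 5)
This lemma is not proved in the paper at all: it is imported verbatim by citation from Kami\'{n}ska--Raynaud \cite{Kaminska2019AbstractLS} (Theorems 8.8, 8.10 and 6.11 there), so there is no in-paper argument to compare against. Your reconstruction does follow the general strategy of that source --- associate-norm formula plus Hardy--Littlewood to reduce to decreasing functions, Young duality for the $(\varphi,\psi)$ pair, the submajorization constraint $g\prec\omega$ coming from the $\Lambda_\omega$/$M_W$ duality, Halperin's level function for the simplification of $P_{\psi,\omega}$, and absolute continuity of the norm under $\Delta_2$ and $W(\infty)=\infty$ for the passage from the K\"othe dual to the Banach dual. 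Part (2) of your sketch is essentially complete modulo the standard representation theorem (the paper itself quotes exactly this as Lemma 6).

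The genuine gaps are in part (1), and they are precisely the content of the cited theorems rather than routine steps. First, the ``short inf-sup exchange'' that is supposed to convert $\sup\{\int|fg| : \|g\|^{o}_{\varphi,\omega}\le 1\}$ into $\inf\{\lambda>0 : P_{\psi,\omega}(f/\lambda)\le 1\}$ is a minimax statement over the infinite-dimensional constraint set $\{g\prec\omega\}$; you have not exhibited the saddle point or any compactness/convexity structure that would justify interchanging the supremum over $g$ with the infimum over the weights, and for a general (non-$N$) Orlicz function the Young inequality need not be saturated, so the upper bound you get from Young's inequality is not obviously attained. Second, for the level-function formula you correctly identify the easy direction (taking $g=\omega^{f^*}$, which must itself be checked to satisfy $g\prec\omega$) and the Jensen argument on each maximal level interval, but the converse --- that no competitor $g\prec\omega$ beats the inverse level function, and that the infimum is actually attained rather than merely approached --- is the delicate part; your own text concedes this is unresolved. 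As it stands the proposal is a faithful outline of the known proof, not a proof.
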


\begin{lemma} \cite[Theorem 2.5]{Kaminska199029}
For arbitrary decreasing weight $\omega$ and Orlicz function $\varphi$, the following condition are equivalent:\\
(1)$\varphi \in \Delta_{2}$,\\
(2)$\rho_{\varphi, \omega}(f)=1$ if and only if $\|f\|_{\varphi, \omega}=1$,\\
(3)For arbitrary $\varepsilon>0$, there exists $\delta>0$, such that $\rho_{\varphi, \omega}(f)\geq 1-\varepsilon$ whenever $\|f\|_{\varphi, \omega}\geq 1-\delta$,\\
(4)$\rho_{\varphi, \omega}(f_{n})\rightarrow 0$ if and only if $\|f_{n}\|_{\varphi, \omega}\rightarrow 0\:(n\rightarrow\infty)$.
\end{lemma}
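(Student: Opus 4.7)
The plan is to prove the cycle $(1) \Rightarrow (3) \Rightarrow (2) \Rightarrow (1)$ together with the separate equivalence $(1) \Leftrightarrow (4)$. The workhorse from the $\Delta_{2}$-condition is the uniform pointwise estimate
\[
\varphi((1-\eta)u) \geq (1-K\eta)\varphi(u), \qquad u>0,\ \eta\in(0,1),
\]
where $K$ depends only on the $\Delta_{2}$-constant of $\varphi$; this follows from $\varphi(u)-\varphi((1-\eta)u)=\int_{(1-\eta)u}^{u}p(t)\,dt \leq \eta u p(u)$ together with the standard consequence $u p(u) \leq \varphi(2u) \leq C\varphi(u)$ of $\Delta_{2}$.

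For $(1)\Rightarrow(3)$: given $\varepsilon>0$ set $\delta=\varepsilon/K$ and suppose $\|f\|_{\varphi,\omega}\geq 1-\delta$. Put $g=f/(1-\delta)$, so $\|g\|_{\varphi,\omega}\geq 1$, which by the Luxemburg infimum gives $\rho_{\varphi,\omega}(\mu g)>1$ for every $\mu>1$; dominated convergence under $\Delta_{2}$ then yields $\rho_{\varphi,\omega}(g)\geq 1$, and applying the uniform pointwise estimate inside the integral gives $\rho_{\varphi,\omega}(f)=\rho_{\varphi,\omega}((1-\delta)g)\geq(1-K\delta)\rho_{\varphi,\omega}(g)\geq 1-\varepsilon$. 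For $(3)\Rightarrow(2)$: if $\|f\|_{\varphi,\omega}=1$ then $(3)$ applied with $\delta$ arbitrary gives $\rho_{\varphi,\omega}(f)\geq 1$, while Fatou applied to $\rho_{\varphi,\omega}(f/\lambda)\leq 1$ as $\lambda\downarrow 1$ gives $\rho_{\varphi,\omega}(f)\leq 1$; the reverse implication $\rho_{\varphi,\omega}(f)=1\Rightarrow \|f\|_{\varphi,\omega}=1$ needs only convexity, namely $\rho_{\varphi,\omega}(f)\leq \|f\|_{\varphi,\omega}$ when $\|f\|_{\varphi,\omega}\leq 1$ (from $\varphi(ru)\leq r\varphi(u)$ for $r\in[0,1]$), which rules out $\|f\|_{\varphi,\omega}<1$. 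The equivalence $(1)\Leftrightarrow(4)$ is handled in parallel: assuming (1), the forward direction of (4) uses the same inequality $\rho_{\varphi,\omega}(f_n)\leq \|f_n\|_{\varphi,\omega}$, and the reverse iterates $\Delta_{2}$ to get $\rho_{\varphi,\omega}(2^{k}f_n)\leq C^{k}\rho_{\varphi,\omega}(f_n)\to 0$, so $\|f_n\|_{\varphi,\omega}\leq 2^{-k}$ eventually, for arbitrary $k$.

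Both reverse implications $(2)\Rightarrow(1)$ and $(4)\Rightarrow(1)$ are proved contrapositively from the same construction. If $\varphi\notin\Delta_{2}$, pick $u_n$ with $\varphi(2u_n)\geq 2^{n}\varphi(u_n)$ and pairwise disjoint sets $E_n$ satisfying $W(|E_n|)=1/(n\varphi(u_n))$ (which exist because $W$ is continuous with $W(\infty)=\infty$); then $f_n=u_n\chi_{E_n}$ gives $\rho_{\varphi,\omega}(f_n)=1/n\to 0$ while $\rho_{\varphi,\omega}(2f_n)=\varphi(2u_n)W(|E_n|)\geq 2^{n}/n\to\infty$ forces $\|f_n\|_{\varphi,\omega}\geq 1/2$, refuting (4). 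To refute (2) I pass instead to a block-sum $f=\sum_{n}u_n\chi_{E_n}$ over rescaled disjoint blocks with $|E_n|$ tuned so that $\rho_{\varphi,\omega}(f)<1$ while $\rho_{\varphi,\omega}(\mu f)=\infty$ for every $\mu>1$, whence $\|f\|_{\varphi,\omega}=1\neq\rho_{\varphi,\omega}(f)$.

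The main technical obstacle I expect lies in the $(2)\Rightarrow(1)$ construction: arranging the blocks $u_n\chi_{E_n}$ so that the resulting $f$ lies exactly on the unit sphere while the modular stays strictly below $1$ demands simultaneous control of the series $\sum\varphi(u_n)W(|E_n|)$ and $\sum\varphi(\mu u_n)W(|E_n|)$ uniformly over a dense sequence of $\mu>1$, and the presence of a general decreasing weight $\omega$ (rather than Lebesgue measure) forces one to order the blocks so that the rearrangement $f^{*}$ inherits the step structure from the disjoint supports and the cumulative function $W$ behaves as intended.
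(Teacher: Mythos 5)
This lemma is quoted in the paper from \cite[Theorem 2.5]{Kaminska199029} and no proof is given there, so there is no in-paper argument to compare against; your proposal has to stand on its own. On its own terms it is essentially the standard modular-versus-norm argument and I find no fatal gap: the pointwise estimate $\varphi((1-\eta)u)\geq(1-K\eta)\varphi(u)$ via $up(u)\leq\varphi(2u)\leq C\varphi(u)$ is correct and transfers to the modular because $((1-\delta)g)^*=(1-\delta)g^*$; the chain $(1)\Rightarrow(3)\Rightarrow(2)$, the convexity inequality $\rho_{\varphi,\omega}(f)\leq\|f\|_{\varphi,\omega}$ on the ball, and the iteration $\rho(2^kf_n)\leq C^k\rho(f_n)$ for $(1)\Rightarrow(4)$ are all sound. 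Two points deserve explicit care. First, in $(1)\Rightarrow(3)$ the passage from $\rho(\mu g)>1$ for all $\mu>1$ to $\rho(g)\geq1$ needs the dominating function $\varphi(2g^*)$ to be integrable, which is exactly where $E_{\varphi,\omega}=\Lambda_{\varphi,\omega}$ under $\Delta_2$ enters; you invoke dominated convergence but should say this is why it applies. Second, for the contrapositive constructions you correctly flag the rearrangement issue: since the paper's $\Delta_2$ is the global condition and $\varphi(2t)/\varphi(t)$ is continuous on $(0,\infty)$, its failure occurs along $u_n\to\infty$ or $u_n\to0$, so the witnesses can be taken monotone and the blocks laid down as a genuinely decreasing step function (adjacent intervals shrinking to $0$ when $u_n\uparrow\infty$, marching to $\infty$ when $u_n\downarrow0$), making $f=f^*$ and $\rho(f)=\sum\varphi(u_k)W(t_{k-1},t_k)$ computable directly. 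This is precisely the device the paper itself uses in the proof of Theorem 5 for the $\nabla_2$ necessity, so your sketch is consistent with the techniques used elsewhere in the paper; filling in that ordering argument is all that separates your proposal from a complete proof.
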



For $x\in\Lambda_{\varphi, \omega}^{o}$ we define $supp\; x=\{t\in R^{+}: x(t)\neq 0\}$.

\begin{lemma}\cite[Remark 1]{WangSEP2023}Let $\varphi$ be an Orlicz function and $\omega$ be a decreasing weight. \\
(1) If Orlicz function $\varphi$ is an $N$-function, then for all $x\in S(\Lambda_{\varphi, \omega}^{o})$ we have $k^{**}<\infty$, $K(x)\neq \emptyset$.\\
(2) If $\lim_{u\rightarrow\infty}\frac{\varphi(u)}{u}=\lim_{u\rightarrow\infty}p(u)=B<\infty$ and $\lim_{u\rightarrow\infty}Bu-\varphi(u)=\infty$, then for all $x\in \Lambda_{\varphi, \omega}^{o}$, $x\neq 0$, we have $k^{**}<\infty$ and $K(x)\neq \emptyset$.\\
(3) If $\lim_{u\rightarrow\infty} \frac{\varphi(u)}{u}=B<\infty$. Let $x\in\Lambda_{\varphi, \omega}^{o}$, $x\neq 0$. If $\psi(B)\int_{0}^{\mu(supp\:x)}\omega(t)dt>1$, then $k^{**}<\infty$ and $K(x)\neq \emptyset$. \\
(4) If $\lim_{u\rightarrow\infty}\frac{\varphi(u)}{u}=B<\infty$. Let $x\in \Lambda_{\varphi, \omega}^{o}$, $x\neq 0$ and $\psi(B)\int_{0}^{m(supp\:x)}\omega(t)dt\leq 1$, then $k^{**}<\infty$ and $K(x)= \emptyset$.

In case (1)(2)(3),  $\|x\|_{\varphi, \omega}^{o}=\frac{1}{k}\left( 1+\rho_{\varphi, \omega}(kx) \right)$ if and only if $k\in K(x)$. In case (4),
$\|x\|_{\varphi, \omega}^{o}=B\int_{0}^{\infty}x^{*}(t)\omega(t)dt$.
\end{lemma}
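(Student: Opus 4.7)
The plan is to study the one-variable Amemiya-type function $h(k) := \tfrac{1}{k}(1+\rho_{\varphi,\omega}(kx))$, whose infimum over $k>0$ equals $\|x\|_{\varphi,\omega}^{o}$, and to locate its minimizers via the auxiliary modular
\[
R(k) := \rho_{\psi,\omega}(p(kx^*)) = \int_{0}^{\infty}\psi(p(kx^*(t)))\,\omega(t)\,dt.
\]
The key identity, which I will derive by differentiating $F(k):=\rho_{\varphi,\omega}(kx)$ under the integral sign and applying the Young equality $p(u)u=\varphi(u)+\psi(p(u))$ pointwise at $u=kx^*(t)$, is $kF'_{+}(k) = F(k)+R(k)$, whence $h'_{+}(k) = (R(k)-1)/k^{2}$. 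Since $p$ is non-decreasing, so is $R$; thus $h$ is decreasing on $\{R<1\}$, increasing on $\{R>1\}$, and attains its minimum precisely when $R$ crosses the value $1$. In that case, every $k$ satisfying the sandwich $\rho_{\psi,\omega}(p_{-}(kx^*)) \leq 1 \leq \rho_{\psi,\omega}(p(kx^*))$ lies in $K(x)$.

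Next, I will compute $\lim_{k\to\infty}R(k)$ by monotone convergence. On $\{x^{*}>0\}$, $p(kx^{*}(t))\uparrow B := \lim_{u\to\infty}p(u)$ as $k\to\infty$, so
\[
\lim_{k\to\infty}R(k) \;=\; \psi(B)\cdot W(\mu(\mathrm{supp}\,x)),
\]
with the convention $\psi(B)=+\infty$ when $B=\infty$ or when $\lim_{u\to\infty}(Bu-\varphi(u))=\infty$. In Case~(1) ($\varphi$ an $N$-function forces $B=\infty$), Case~(2) ($B<\infty$ but $\psi(B)=\infty$), and Case~(3) (the limit exceeds $1$ by hypothesis), $R$ strictly exceeds $1$ for large $k$ and hence crosses $1$ at some finite $k$. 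This gives $k^{**}<\infty$, $K(x)\neq\emptyset$, and $\|x\|^{o} = h(k)$ for every $k\in K(x)$.

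In Case~(4), the hypothesis forces $R(k) \leq \psi(B)W(\mu(\mathrm{supp}\,x)) \leq 1$ for all $k>0$; hence $h'_{+}(k)\leq 0$, $h$ is non-increasing on $(0,\infty)$, $K(x)=\emptyset$, and $\|x\|^{o}=\lim_{k\to\infty}h(k)$. To evaluate this limit, I will use convexity of $\varphi$: the ratio $\varphi(kx^{*}(t))/k = x^{*}(t)\cdot\varphi(kx^{*}(t))/(kx^{*}(t))$ is non-decreasing in $k$ and converges to $Bx^{*}(t)$ on $\{x^{*}>0\}$. Monotone convergence then yields
\[
\|x\|^{o} \;=\; \lim_{k\to\infty}\Bigl(\tfrac{1}{k} + \int_{0}^{\infty}\tfrac{\varphi(kx^{*}(t))}{k}\omega(t)\,dt\Bigr) \;=\; B\int_{0}^{\infty}x^{*}(t)\,\omega(t)\,dt.
\]

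The main obstacle I anticipate is the derivative identity $kF'_{+}(k) = F(k)+R(k)$: I must justify differentiation under the integral when the integrand involves the rearrangement $x^{*}$ (I would pull back to $\mathrm{supp}\,x$ using a measure-preserving transformation, as afforded by the resonance of Orlicz-Lorentz spaces recalled in \S2) and must carefully handle jump discontinuities of $p$ at endpoints of affine intervals of $\varphi$, so that both $p_{-}$ and $p$ appear in the final characterization of $K(x)$. Once this identity is secured, the rest of the argument is a clean case split on the value of $\lim_{k\to\infty}R(k)$ relative to $1$, together with the monotone-convergence computation of $\|x\|^{o}$ in Case~(4).
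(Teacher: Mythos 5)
Your proposal is correct and follows essentially the same route as the source this lemma is quoted from: the paper itself imports the statement from \cite{WangSEP2023} without reproducing a proof, and the standard argument there is exactly your Amemiya-type analysis of $h(k)=\frac{1}{k}\left(1+\rho_{\varphi,\omega}(kx)\right)$ via the one-sided derivatives $h'_{\pm}(k)=\left(\rho_{\psi,\omega}(p_{\mp}(kx^{*}))-1\right)/k^{2}$, the monotonicity of $k\mapsto\rho_{\psi,\omega}(p(kx^{*}))$, and the limit $\lim_{k\to\infty}\rho_{\psi,\omega}(p(kx^{*}))=\psi(B)\,W(\mu(supp\,x))$, with the Case~(4) norm computed by monotone convergence from $\varphi(ku)/k\uparrow Bu$. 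The only point worth flagging is that your (correct) analysis in Case~(4) yields $k^{**}=\sup\{k>0:\rho_{\psi,\omega}(p(kx^{*}))\leq 1\}=\infty$, so the clause ``$k^{**}<\infty$'' in part (4) of the statement appears to be a misprint for $k^{**}=\infty$, which is precisely the reason $K(x)=\emptyset$ there.
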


\begin{lemma}
For arbitrary $f\in \mathcal{M}_{\varphi, \omega}^{o}$,
if there exists $k>0$ such that
$$
\int_{R^{+}}\psi(p(\frac{kf^{*}(t)}{\omega^{f^{*}}}))\omega^{f^{*}}(t)=1
$$
then
$$
\|f\|_{\mathcal{M}_{\varphi, \omega}}^{o}=\frac{1}{k}\left(1+P_{\varphi, \omega}(kf)\right).
$$
\end{lemma}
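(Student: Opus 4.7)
My plan is to proceed by the classical Amemiya route, adapted to the dual K\"othe space. First I would establish (or invoke) the Amemiya-type identity
\[
\|f\|_{\mathcal{M}_{\varphi,\omega}}^{o} = \inf_{k>0}\frac{1}{k}\bigl(1 + P_{\varphi,\omega}(kf)\bigr),
\]
which is the dual analogue of Theorem 2.5 of \cite{Wu1999235}; it follows from the definition of the Orlicz norm as a supremum against the unit ball of a modular together with Young's inequality $uv\le\varphi(u)+\psi(v)$. The remainder of the argument consists in locating the minimizer of $h(k):=\frac{1+P_{\varphi,\omega}(kf)}{k}$ and showing that any $k$ satisfying the hypothesis of the lemma is one.

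For the computation I would use the explicit representation from Lemma~1, rewritten for $\varphi$:
\[
P_{\varphi,\omega}(kf) = \int_{R^{+}} \varphi\!\left(\tfrac{kf^{*}(t)}{\omega^{f^{*}}(t)}\right)\omega^{f^{*}}(t)\,dt.
\]
A crucial preliminary observation is the scaling invariance $\omega^{kf^{*}} = \omega^{f^{*}}$: the defining inequality $R(a_{1},x)\leq R(a_{1},b_{1})$ for a level interval is homogeneous in $f^{*}$, so the maximal level intervals of $kf^{*}$ and $f^{*}$ coincide, and on each such interval $kf^{*}(t)/R(a_{n},b_{n};kf^{*}) = f^{*}(t)/R(a_{n},b_{n};f^{*})$. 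Consequently the weight appearing in the modular integrand is independent of $k$.

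Now differentiate $h$ in $k$. Applying Young's equality $\psi(p(u)) = u\,p(u) - \varphi(u)$ at $u = kf^{*}/\omega^{f^{*}}$, the terms $\varphi(kf^{*}/\omega^{f^{*}})\omega^{f^{*}}$ cancel between $kP'(k)$ and $P_{\varphi,\omega}(kf)$, yielding
\[
k^{2}\, h'_{+}(k) = \int_{R^{+}} \psi\!\left(p\!\left(\tfrac{kf^{*}}{\omega^{f^{*}}}\right)\right)\omega^{f^{*}}(t)\,dt - 1,
\]
with the analogous formula for $h'_{-}(k)$ using $p_{-}$ in place of $p$. Both $p$ and $\psi$ are non-decreasing, so the right-hand side is monotone in $k$; hence $h$ is convex and its minimum occurs exactly on $\{k: h'_{-}(k)\leq 0\leq h'_{+}(k)\}$. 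Any $k$ satisfying the hypothesis of the lemma lies in that set, so $h(k)$ realizes the Amemiya infimum and the equality follows.

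The main obstacle is rigorously interchanging differentiation with integration when $p$ is discontinuous. I would handle this by working with one-sided difference quotients separately, using the monotonicity of $p$ and $\psi$ together with the fact that $f\in\mathcal{M}_{\varphi,\omega}^{o}$ forces the modular $P_{\varphi,\omega}(\lambda f)$ to be finite on an open interval containing the prescribed $k$; this yields the dominating functions needed for monotone (or dominated) convergence. The $k$-independence of $\omega^{f^{*}}$ is what keeps the calculation tractable: otherwise one would also have to differentiate a weight defined through a rearrangement procedure, which would require substantially more machinery.
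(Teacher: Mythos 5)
The paper states this lemma without any proof (and without a citation), so there is nothing to compare against line by line; judged on its own, your outline is essentially the right argument — the classical Amemiya/Orlicz-norm computation transplanted to the modular $P_{\varphi,\omega}$. Your key structural observation, that $\omega^{kf^{*}}=\omega^{f^{*}}$ because the level-interval condition $R(a_{1},x)\le R(a_{1},b_{1})$ and the ratio $f^{*}/R(a_{n},b_{n})$ are both homogeneous of degree zero under $f^{*}\mapsto kf^{*}$, is correct and is exactly what makes the one-variable analysis of $h(k)=\frac{1}{k}\bigl(1+P_{\varphi,\omega}(kf)\bigr)$ reduce to the textbook computation over the fixed measure $\omega^{f^{*}}\,dt$; the identity $k^{2}h'_{\pm}(k)=\int\psi\bigl(p_{\mp}(kf^{*}/\omega^{f^{*}})\bigr)\omega^{f^{*}}\,dt-1$ then characterizes the minimizers in the usual way.

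Two soft spots deserve attention. First, your parenthetical ``hence $h$ is convex'' does not follow from the monotonicity of $k\mapsto\int\psi(p(kf^{*}/\omega^{f^{*}}))\omega^{f^{*}}$ (only $k^{2}h'(k)$, not $h'(k)$, is monotone); what that monotonicity actually gives is that $h'$ changes sign at most once, from nonpositive to nonnegative, which is all you need — state it that way. Second, the claim that $f\in\mathcal{M}_{\varphi,\omega}^{o}$ ``forces the modular $P_{\varphi,\omega}(\lambda f)$ to be finite on an open interval containing the prescribed $k$'' is not justified: membership in the space only gives finiteness for \emph{some} $\lambda>0$, which may well be smaller than $k$. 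The finiteness of $P_{\varphi,\omega}(kf)$ (without which the asserted equality is vacuous or false) must instead be extracted from the hypothesis $\int\psi(p(kf^{*}/\omega^{f^{*}}))\omega^{f^{*}}=1$ by the standard $k\le k^{**}$ argument from Orlicz-space theory, and the domination needed to justify the one-sided differentiation under the integral should likewise be anchored there rather than in mere membership in $\mathcal{M}_{\varphi,\omega}^{o}$. Finally, you invoke the Amemiya identity for the $P$-modular; since $P$ is defined through an infimum over $g\prec\omega$, that identity is itself not immediate and should be cited explicitly (it is available in the Kami\'nska--Le\'snik--Raynaud line of work) or derived from the level-function representation before being used.
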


For arbitrary $f\in \mathcal{M}_{\varphi, \omega}$, define
\begin{align*}
k^{*}_{\mathcal{M}} &=\inf\{k>0: \int_{0}^{\infty}\psi(p(\frac{kf^{*}(t)}{\omega^{f^{*}}(t)}))\omega^{f^{*}}(t)dt\geq 1\},\\
k^{**}_{\mathcal{M}}&=\sup\{k>0: \int_{0}^{\infty}\psi(p(\frac{kf^{*}(t)}{\omega^{f^{*}}(t)}))\omega^{f^{*}}(t)dt\leq 1\}.
\end{align*}
Obviously, $k^{*}_{\mathcal{M}}\leq k^{**}_{\mathcal{M}}$. Let $K_{\mathcal{M}}=[k_{\mathcal{M}}^{*}, k^{**}_{\mathcal{M}}]$. If $k^{**}_{\mathcal{M}}<\infty$,
Then $K_{\mathcal{M}}\neq \emptyset$. Next we will prove $K_{\mathcal{M}}\neq \emptyset$ when $\varphi$ is an $N$-funtion.

\begin{lemma}\cite{Wang2024}
If $\varphi$ is an N-function, then $k^{**}_{\mathcal{M}}<\infty$ for arbitrary $f\in \mathcal{M}_{\varphi, \omega}^{o}$.
\end{lemma}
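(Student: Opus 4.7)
The plan is to show that the monotone function
$$I(k) := \int_0^\infty \psi\!\left(p\!\left(\frac{kf^*(t)}{\omega^{f^*}(t)}\right)\right)\omega^{f^*}(t)\,dt$$
tends to $+\infty$ as $k \to \infty$, which immediately yields a finite $k_0$ with $I(k_0) > 1$ and hence $k^{**}_{\mathcal{M}} \leq k_0 < \infty$. Since both $p$ and $\psi$ are non-decreasing, the integrand is monotone non-decreasing in $k$, so the Monotone Convergence Theorem reduces the task to exhibiting a set $A$ of positive Lebesgue measure on which the integrand tends pointwise to $+\infty$.

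For such a set I take $A = \{t : f^*(t) > 0,\ 0 < \omega^{f^*}(t) < \infty\}$. The $N$-function hypothesis on $\varphi$ forces $p(u) \to \infty$ as $u \to \infty$ (from $p(u) \geq \varphi(u)/u \to \infty$), and since the complementary function $\psi$ is itself an $N$-function, $\psi(v)\to\infty$ as $v\to\infty$. Thus for each $t \in A$ the argument $kf^*(t)/\omega^{f^*}(t) \to \infty$ as $k\to\infty$, so $\psi(p(\cdot))\to\infty$, and multiplication by the fixed positive finite factor $\omega^{f^*}(t)$ still yields the pointwise blow-up.

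The remaining point is that $A$ has positive measure whenever $f \neq 0$. Using Definition 2, write $E = \bigcup_n (a_n, b_n)$ for the union of maximal level intervals of $f^*$ with respect to $\omega$; on $E^c$ one has $\omega^{f^*} = \omega$, which is strictly positive and locally integrable and hence finite a.e., while on $(a_n, b_n)$ one has $\omega^{f^*} = f^*/R_n$. Because $f \neq 0$ gives $\{f^* > 0\}$ positive measure, either (i) $\{f^* > 0\} \cap E^c$ has positive measure, in which case it lies inside $A$ up to a null set, or (ii) some $(a_n, b_n)$ carries positive $f^*$-mass, forcing $F(a_n, b_n) > 0$ and hence $R_n > 0$, so that the intersection of $(a_n,b_n)$ with $\{f^*>0\}$ also sits inside $A$.

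The argument is essentially structural; the only delicate step is the bookkeeping case split between the level-interval and non-level-interval portions of $\{f^* > 0\}$, and no estimate beyond the $N$-function property of $\varphi$ (and the a.e.\ finiteness and positivity of $\omega$) is required.
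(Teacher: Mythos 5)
Your argument is correct: monotonicity of $I(k)$ in $k$, the bound $p(u)\geq\varphi(u)/u\to\infty$ from the $N$-function hypothesis, and the case split showing that $\{f^*>0,\ 0<\omega^{f^*}<\infty\}$ has positive measure (using $\omega^{f^*}=\omega$ off the level intervals and $\omega^{f^*}=f^*/R_n$ with $R_n>0$ on any level interval carrying $f^*$-mass) together give $I(k)\to\infty$ by monotone convergence, hence $k^{**}_{\mathcal{M}}<\infty$. The paper states this lemma only with a citation and gives no in-text proof, but your route is the standard one (the direct analogue of the classical Orlicz-space argument that $K(x)\neq\emptyset$ for $N$-functions); the only point worth making explicit is that $f^*<\infty$ a.e.\ for $f$ in the space, so that $\omega^{f^*}$ is indeed finite on the level-interval portion of your set $A$.
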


\begin{lemma}\cite[Corollary 4.3]{Bennett1953}
The Banach space dual $X^{*}$ of a Banach function space $X$ is canonically isometrically isomorphic to the K\"{o}the dual
$X^{\prime}$ if and only if $X$ has absolutely continuous norm.
\end{lemma}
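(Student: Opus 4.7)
First I would observe that the map $g\mapsto L_g$ defined by $L_g(f)=\int f(t)g(t)\,d\mu(t)$ is always a well-defined isometric embedding $X^{\prime}\hookrightarrow X^{*}$; this follows directly from the definition of the K\"othe norm $\|g\|_{X^{\prime}}=\sup_{\|f\|_X\le 1}\int|fg|\,d\mu$ together with the fact that $|f|$ has the same norm as $f$ in a Banach function space. The lemma therefore reduces to characterising when this canonical embedding is surjective, and the two implications are naturally handled by different machinery.

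For the sufficiency direction (AC norm $\Rightarrow X^{*}=X^{\prime}$) the argument goes through Radon--Nikodym. Fix $\phi\in X^{*}$ and define $\nu(E)=\phi(\chi_E)$ for measurable $E$ of finite measure with $\chi_E\in X$. Absolute continuity of the norm gives $\|\chi_{E_n}\|_X\to 0$ whenever $E_n\downarrow\emptyset$, which yields countable additivity and $\mu$-absolute continuity of $\nu$. Radon--Nikodym then produces a locally integrable $g$ with $\phi(\chi_E)=\int_E g\,d\mu$; linearity extends this representation to simple functions, and the density of simple functions in $X$ (also a consequence of the AC norm) extends it to all of $X$, giving $\phi=L_g$. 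Finally $|L_g(f)|\le\|\phi\|\,\|f\|_X$ places $g$ in $X^{\prime}$ with $\|g\|_{X^{\prime}}\le\|\phi\|_{X^{*}}$, completing the isometry.

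The necessity direction I would prove contrapositively. If $X$ fails to have absolutely continuous norm, choose $f_0\ge 0$ in $X$ and $E_n\downarrow\emptyset$ with $\|f_0\chi_{E_n}\|_X\ge\delta>0$, and use Hahn--Banach inside the Banach-lattice dual $X^{*}$ to obtain positive functionals $\phi_n\in S(X^{*})$ with $\phi_n(f_0\chi_{E_n})\ge\delta$. Let $\phi$ be a weak$^{*}$ cluster point of $(\phi_n)$ in the closed unit ball of $X^{*}$; positivity combined with the nested structure gives $\phi_m(f_0\chi_{E_n})\ge\phi_m(f_0\chi_{E_m})\ge\delta$ for all $m\ge n$, so the cluster point satisfies $\phi(f_0\chi_{E_n})\ge\delta$ for every fixed $n$. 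But if $\phi=L_g$ for some $g\in X^{\prime}$, then $f_0 g\in L^{1}(\mu)$, so dominated convergence forces $\phi(f_0\chi_{E_n})=\int_{E_n}f_0 g\,d\mu\to 0$, contradicting the uniform lower bound $\delta$. The main obstacle is precisely this last construction: preserving the non-vanishing of $\phi$ on arbitrarily small sets through the passage to a weak$^{*}$ limit, which is what forces one to take the $\phi_n$ positive and to exploit the nested structure of $(E_n)$.
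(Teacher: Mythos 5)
Your argument is correct and is essentially the proof in the cited source (Bennett--Sharpley, Ch.~1, Thm.~4.1 and Cor.~4.3), which the paper invokes without reproving: Radon--Nikodym plus density of simple functions for sufficiency, and a non-representable weak$^{*}$ cluster point of positive norming functionals for necessity. The only step you gloss is passing from $\lvert\int fg\rvert\le\|\phi\|\,\|f\|_{X}$ (known a priori only on simple $f$) to $\|g\|_{X^{\prime}}\le\|\phi\|_{X^{*}}$, which uses the lattice property of the norm and a monotone-convergence/Fatou argument for the K\"othe norm; this is routine.
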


\begin{lemma}\cite[Theorem 9, p.g. 36]{kan1982},
For arbitrary $f\in (\Lambda_{\varphi, \omega}^{o})^{*}$, f has a unique decomposition
$$
f=L_{g}+s,\;\;g\in \mathcal{M}_{\psi, \omega},\: s\in F
$$
where s is singular and $F$ is the set of singular functionals of $x$.
\end{lemma}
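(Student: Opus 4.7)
The plan is to exploit the maximal order-continuous subspace $E_{\varphi,\omega}$ of $\Lambda_{\varphi,\omega}^{o}$, together with the identification of the K\"othe dual in Lemma 1 and the duality criterion in Lemma 6. First, I would restrict the given functional $f\in(\Lambda_{\varphi,\omega}^{o})^{*}$ to $E_{\varphi,\omega}$. Since $E_{\varphi,\omega}$ has absolutely continuous norm, Lemma 6 produces a unique $g\in (E_{\varphi,\omega})'$; and since $(E_{\varphi,\omega})'$ coincides with $(\Lambda_{\varphi,\omega}^{o})'=\mathcal{M}_{\psi,\omega}$ by Lemma 1, one has $f(h)=L_{g}(h)=\int_{R^{+}} h(t)g(t)\,dt$ for every $h\in E_{\varphi,\omega}$.

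Next, because $g\in\mathcal{M}_{\psi,\omega}$ is an element of the K\"othe dual of the whole space, the formula $L_{g}(h)=\int hg$ actually defines a bounded linear functional on all of $\Lambda_{\varphi,\omega}^{o}$, with $\|L_{g}\|\leq\|g\|_{\mathcal{M}_{\psi,\omega}}$. Setting $s:=f-L_{g}$, the construction guarantees $s|_{E_{\varphi,\omega}}=0$, so $s$ is singular in the sense that it annihilates the order-continuous part of the space. This yields the desired decomposition $f=L_{g}+s$.

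For uniqueness, suppose $f=L_{g_{1}}+s_{1}=L_{g_{2}}+s_{2}$ with $g_{1},g_{2}\in\mathcal{M}_{\psi,\omega}$ and $s_{1},s_{2}$ singular. Then $L_{g_{1}-g_{2}}=s_{2}-s_{1}$ vanishes on $E_{\varphi,\omega}$. Testing against characteristic functions $\chi_{A}$ of measurable sets $A$ of finite measure (all of which lie in $E_{\varphi,\omega}$) gives $\int_{A}(g_{1}-g_{2})\,dt=0$ for every such $A$, hence $g_{1}=g_{2}$ almost everywhere, and consequently $s_{1}=s_{2}$.

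The main obstacle is the identification $(E_{\varphi,\omega})'=(\Lambda_{\varphi,\omega}^{o})'=\mathcal{M}_{\psi,\omega}$, which is what legitimizes treating the representing function $g$ produced on $E_{\varphi,\omega}$ as a bona fide regular functional on the whole space. One resolves this by observing that both K\"othe duals are defined via the pairing with functions of bounded support and bounded values, and $E_{\varphi,\omega}$ already contains every such function; consequently the two K\"othe duals have the same multipliers and coincide isometrically. Once this equivalence is in hand, the decomposition and its uniqueness follow as above, in direct parallel with the classical Kantorovich decomposition cited in the statement.
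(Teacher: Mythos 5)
The paper does not prove this statement at all --- it is imported verbatim from the cited reference (\cite[Theorem 9, p.\ 36]{kan1982}), so there is no in-paper argument to compare against. Your proposal is the standard Yosida--Hewitt/Kantorovich-style proof of exactly this result (restrict to the order-continuous part $E_{\varphi,\omega}$, represent the restriction by a K\"othe-dual element $g$ via Lemma 6, set $s=f-L_{g}$, and get uniqueness by testing $L_{g_1-g_2}$ against $\chi_{A}$ with $\mu(A)<\infty$), and it is essentially correct. The only point worth tightening is the appeal to Lemma 6: as stated it applies to a Banach function space with the Fatou property, which $E_{\varphi,\omega}$ need not have, so you should instead invoke the companion result that $(X_a)^{*}\cong X^{\prime}$ when the order-continuous part $X_a$ contains the simple functions of finite-measure support --- your observation that $\chi_{A}\in E_{\varphi,\omega}$ (by local integrability of $\omega$) is precisely what makes that identification, and hence $(E_{\varphi,\omega})^{\prime}=(\Lambda_{\varphi,\omega}^{o})^{\prime}=\mathcal{M}_{\psi,\omega}$, legitimate.
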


Denote $\|f\|_{(\Lambda_{\varphi, \omega}^{o})^{*}}$ and $\|f\|_{\Lambda_{\varphi, \omega}^{*}}$ by $\|f\|$ and $\|f\|^{o}$, respectively.
\begin{lemma}\cite[Lemma 1.49]{Chen1996}
Let arbitrary singular functional $s\in F$,
$\|s\|=\|s\|^{o}=sup\{s(u):\rho_{\varphi,\omega}(u)<\infty\}=sup\{\frac{s(u)}{\theta(u)}:u\in L_{\varphi, \omega}\backslash E_{\varphi, \omega}\}$.
\end{lemma}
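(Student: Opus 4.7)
The plan is to identify all four expressions with the scale-invariant quantity $M:=\sup\{s(u)/\theta(u): u\in L_{\varphi,\omega}\setminus E_{\varphi,\omega}\}$, relying on two ingredients: the defining property that every singular functional $s\in F$ vanishes on $E_{\varphi,\omega}$, and the following approximation lemma, which I expect to be the main technical step. For every $u\in L_{\varphi,\omega}\setminus E_{\varphi,\omega}$ and every $\varepsilon>0$, there exists $u'\in E_{\varphi,\omega}$ with $\|u-u'\|_{\varphi,\omega}\le\theta(u)+\varepsilon$ and $\|u-u'\|_{\varphi,\omega}^{o}\le\theta(u)+\varepsilon$. Such $u'$ is obtained by jointly truncating $u$ in level and in the support of its decreasing rearrangement, transported back by the measure-preserving map $\sigma$ with $u=u^{*}\circ\sigma$; the resulting bounded, $W$-finite-support piece lies in $E_{\varphi,\omega}$, and monotone convergence forces $\rho_{\varphi,\omega}((u-u')/\lambda)\to 0$ for every $\lambda>\theta(u)$, which pins both norms of the residual down to $\theta(u)$.

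For the equality $\sup\{s(u):\rho_{\varphi,\omega}(u)<\infty\}=M$: in the ``$\le$'' direction, $\rho_{\varphi,\omega}(u)<\infty$ forces either $u\in E_{\varphi,\omega}$ (so $s(u)=0$) or $\theta(u)\le 1$ (so $s(u)\le s(u)/\theta(u)\le M$). For ``$\ge$'', given $u\notin E_{\varphi,\omega}$ and $t<1/\theta(u)$, the infimum defining $\theta(u)$ yields $\rho_{\varphi,\omega}(tu)<\infty$, and by linearity $s(tu)=ts(u)\to s(u)/\theta(u)$ as $t\nearrow 1/\theta(u)$, furnishing the bound.

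The dual-norm equalities $\|s\|=\|s\|^{o}=M$ split into matching inequalities via $\theta(v)\le\|v\|_{\varphi,\omega}\le\|v\|_{\varphi,\omega}^{o}$. For the upper bounds $\|s\|^{o}\le M$ and $\|s\|\le M$: any admissible $v$ (unit-Luxemburg or unit-Orlicz) either lies in $E_{\varphi,\omega}$, so $s(v)=0$, or has $\theta(v)\le 1$, so $s(v)=\theta(v)\cdot s(v)/\theta(v)\le M$. For the matching lower bounds, given $u\notin E_{\varphi,\omega}$ I apply the approximation lemma and set $v:=(u-u')/(\theta(u)+\varepsilon)$, which is a unit vector in both norms; since $s(v)=s(u)/(\theta(u)+\varepsilon)$ by singularity, letting $\varepsilon\to 0$ yields both $M\le\|s\|^{o}$ and $M\le\|s\|$, and in particular $\|s\|=\|s\|^{o}$.

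The principal obstacle is the approximation lemma in the regime $W(\infty)=\infty$: naive level-truncation need not lie in $E_{\varphi,\omega}$, because a bounded function supported on an infinite-$W$-measure set can still have infinite modular, so a joint truncation in level and in rearrangement-support is required, followed by a monotone-convergence argument verifying $\rho_{\varphi,\omega}((u-u')/\lambda)\to 0$ at each $\lambda>\theta(u)$, which is what pins both norms of the residual to $\theta(u)$.
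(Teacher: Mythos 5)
The paper does not prove this lemma; it is quoted verbatim from Chen's monograph (Lemma 1.49 there) as an auxiliary result, so there is no in-paper argument to compare against. Your proof is correct and is essentially the classical one. Two remarks. First, your ``approximation lemma'' is precisely the paper's Lemma 12, the distance formula $d(f)=d^{o}(f)=\theta(f)$, which the paper also states without proof; once you have it, the four quantities collapse onto $M=\sup\{s(u)/\theta(u)\}$ exactly as you describe, using only that $s$ vanishes on $E_{\varphi,\omega}$, that $\theta(v)\le\|v\|_{\varphi,\omega}\le\|v\|_{\varphi,\omega}^{o}$, and the scaling $s(tu)=ts(u)$ with $\rho_{\varphi,\omega}(tu)<\infty$ for $t<1/\theta(u)$. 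Second, your construction of $u'$ can be streamlined: take $u_{n}=u\chi_{\{1/n\le|u|\le n\}}$, which is bounded and supported on a set of measure at most $\mu_{u}(1/n)<\infty$, hence lies in $E_{\varphi,\omega}$; since $|u-u_{n}|\le|u|$ one has $(u-u_{n})^{*}\le u^{*}$, and $(u-u_{n})^{*}\downarrow 0$ because $u^{*}(t)\to 0$, so dominated convergence gives $\rho_{\varphi,\omega}((u-u_{n})/\lambda)\to 0$ for every $\lambda>\theta(u)$ without any explicit appeal to the measure-preserving transformation $\sigma$. This simultaneously controls both residual norms, since $\|u-u_{n}\|_{\varphi,\omega}^{o}\le\lambda\bigl(1+\rho_{\varphi,\omega}((u-u_{n})/\lambda)\bigr)$. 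The only loose end is the degenerate case $\Lambda_{\varphi,\omega}=E_{\varphi,\omega}$, where $F=\{0\}$ and all four quantities are $0$ under the usual empty-supremum convention.
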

\begin{lemma}\cite[Corollary 1.50]{Chen1996}
For $f\in (\Lambda_{\varphi, \omega})^{*}$,
$\|f\|=\|f\|^{o}$ if and only if $f\in F$.
\end{lemma}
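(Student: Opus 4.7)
The easy direction is immediate: if $f \in F$, the preceding Lemma (\cite{Chen1996}, 1.49) expresses both $\|f\|$ and $\|f\|^{o}$ as the same supremum $\sup\{f(u) : \rho_{\varphi,\omega}(u) < \infty\}$, so they coincide.

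For the converse I plan a proof by contrapositive. Assume $f \notin F$ and invoke the decomposition result (\cite{kan1982}, Theorem 9) to write $f = L_g + s$ uniquely with $g \in \mathcal{M}_{\psi,\omega}$, $g \neq 0$, and $s$ singular. The strategy combines two ingredients: \emph{(a)} additivity of each dual norm on this decomposition, namely $\|L_g + s\|^{o} = \|L_g\|^{o} + \|s\|^{o}$ and $\|L_g + s\| = \|L_g\| + \|s\|$; and \emph{(b)} the strict inequality $\|L_g\| < \|L_g\|^{o}$ whenever $g \neq 0$. Granted these, together with $\|s\| = \|s\|^{o}$ from the preceding lemma, one obtains
\[
\|f\|^{o} - \|f\| = \bigl(\|L_g\|^{o} - \|L_g\|\bigr) + \bigl(\|s\|^{o} - \|s\|\bigr) = \|L_g\|^{o} - \|L_g\| > 0,
\]
contradicting $\|f\| = \|f\|^{o}$.

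Ingredient \emph{(b)} I would reduce, via Auxiliary Lemma 1, to comparing the Luxemburg norm $\|g\|_{\mathcal{M}_{\psi,\omega}}$ with the Amemiya-type norm $\|g\|_{\mathcal{M}_{\psi,\omega}}^{o} = \inf_{k>0} \tfrac{1}{k}\bigl(1 + P_{\psi,\omega}(kg)\bigr)$ from Auxiliary Lemma 2. For $g \neq 0$, plugging the Luxemburg-optimal $k = 1/\|g\|_{\mathcal{M}}$ into the Amemiya formula and exploiting that the modular $P_{\psi,\omega}(g/\|g\|)$ is strictly positive forces the Amemiya value strictly above the Luxemburg value. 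The borderline case where $P_{\psi,\omega}(g/\|g\|) = 1$ requires a short perturbation of $k$ and is the only delicate point of this step.

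The main obstacle is ingredient \emph{(a)}: additivity of the two dual norms on the regular/singular decomposition. In the classical Orlicz setting of \cite{Chen1996} this is handled by splitting a norming sequence for $\|f\|^{o}$ (resp.\ $\|f\|$) into a piece of finite modular on which $L_g$ essentially acts, and a ``tail'' piece with infinite modular on which $s$ essentially acts, exploiting that $s$ vanishes on $E_{\varphi,\omega}$. In the Orlicz-Lorentz setting, rearrangement invariance obstructs a naive disjoint-support splitting of test elements; I anticipate handling this through a truncation argument (cutting a norming element at a large level to produce a member of $E_{\varphi,\omega}$) combined with the solid and monotone structure of the primal modular, producing asymptotically decoupled norming sequences that certify the additive decomposition.
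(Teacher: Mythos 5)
The paper does not prove this statement; it is quoted verbatim from Chen's book, so your proposal has to stand on its own. Its skeleton (decompose $f=L_g+s$, use additivity of both dual norms over the regular/singular splitting, use $\|s\|=\|s\|^{o}$, and reduce everything to the strict inequality between the Luxemburg-type and Amemiya-type norms of $g$ in $\mathcal{M}_{\psi,\omega}$) is exactly the classical route, and the forward direction via Lemma~1.49 is fine.

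The genuine gap is in your ingredient \emph{(b)}. Normalizing $\|g\|_{\mathcal{M}_{\psi,\omega}}=1$ and evaluating the Amemiya expression at the single value $k=1$ gives $1+P_{\psi,\omega}(g)>1$, but this bounds only one term of the infimum $\inf_{k>0}\frac{1}{k}\bigl(1+P_{\psi,\omega}(kg)\bigr)$, not the infimum itself; the ``delicate point'' is not the borderline $P_{\psi,\omega}(g)=1$ but the regime $k\to\infty$, where $\frac{1}{k}\bigl(1+P_{\psi,\omega}(kg)\bigr)$ can decrease to $1$. Indeed, for $\varphi(u)=u$ (an Orlicz function under the paper's definition) the Luxemburg and Amemiya norms coincide for every $g$, so the strict inequality --- and the lemma itself, read for arbitrary Orlicz functions --- is simply false; some superlinearity hypothesis is hidden in the citation. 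The correct argument requires the infimum to be attained at some finite $k^{*}\in K_{\mathcal{M}}(g)$ (guaranteed when the relevant function is an $N$-function, cf.\ the paper's Lemma~5), after which one splits into the cases $k^{*}\le 1$ (use $\frac{1}{k^{*}}\ge 1$ and $P_{\psi,\omega}(k^{*}g)>0$) and $k^{*}>1$ (use $P_{\psi,\omega}(h)\ge\|h\|_{\mathcal{M}_{\psi,\omega}}$ whenever $\|h\|_{\mathcal{M}_{\psi,\omega}}\ge 1$, giving $\frac{1}{k^{*}}(1+P_{\psi,\omega}(k^{*}g))\ge 1+\frac{1}{k^{*}}>1$). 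You should also be aware that your positivity claim $P_{\psi,\omega}(g/\|g\|)>0$ can fail when $\psi$ vanishes on an interval (i.e.\ when $\varphi$ is linear near the origin), for the same degeneracy reasons. Finally, ingredient \emph{(a)} for the norm $\|\cdot\|^{o}$ is only sketched: the paper supplies $\|L_g+s\|=\|g\|_{\mathcal{M}_{\psi,\omega}}+\|s\|$ (Lemma~11) but not its $\|\cdot\|^{o}$-counterpart, and you need at least the inequality $\|L_g+s\|^{o}\ge\|L_g\|^{o}+\|s\|^{o}$ (the triangle inequality gives the useless direction), so this step cannot be left as an anticipated truncation argument.
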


\begin{lemma}\cite[Theorem 2.48]{Chen1996}
Let $x\in \Lambda_{\varphi, \omega}$, and $\theta(x)\neq 0$. Then there exist two different singular functionals $s_{1}$ and $s_{2}$ such that $s_{1}(x)=s_{2}(x)=\theta(x)$.
\end{lemma}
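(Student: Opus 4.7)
The strategy is to produce two distinct norm-one singular functionals attaining the value $\theta(x)$ at $x$ by constructing two different Hahn--Banach extensions from a carefully chosen three-dimensional enlargement of $E_{\varphi,\omega}$. The first step is to establish the distance identity $\mathrm{dist}_{\|\cdot\|_{\varphi,\omega}}(x,E_{\varphi,\omega})=\theta(x)$: the inequality $\theta(x)\le \|x-z\|_{\varphi,\omega}$ for $z\in E_{\varphi,\omega}$ follows from the subadditivity and positive homogeneity of $\theta$ together with $\theta|_{E_{\varphi,\omega}}=0$, while the reverse follows from the truncations $z_N:=x\chi_{\{|x|\le N\}}\in E_{\varphi,\omega}$ and the finiteness of $\rho_{\varphi,\omega}((x-z_N)/\lambda)$ for $\lambda>\theta(x)$. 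Using a measure-preserving transformation I may then assume $x=x^*$ on $\mathrm{supp}(x)$, and I partition $\mathrm{supp}(x)=A\sqcup B$ by a dyadic interleaving chosen so that both $A$ and $B$ have uniformly positive lower density near both endpoints of $\mathrm{supp}(x)$. This yields the comparison $(x\chi_A)^*(t)\ge x^*(t/c)$ on the relevant scales, and monotonicity of $\omega$ gives $\rho_{\varphi,\omega}(x\chi_A/\lambda)\ge c'\rho_{\varphi,\omega}(x/\lambda)$, forcing $\theta(x\chi_A)\ge \theta(x)$; the reverse inequality being trivial, I conclude $\theta(x\chi_A)=\theta(x\chi_B)=\theta(x)$.

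Setting $x_A:=x\chi_A$, $x_B:=x\chi_B$, I work on the subspace $M:=E_{\varphi,\omega}+\mathbb{R}x_A+\mathbb{R}x_B$ and define two linear functionals
\[
\phi_1(z+\alpha x_A+\beta x_B):=\alpha\,\theta(x),\qquad \phi_2(z+\alpha x_A+\beta x_B):=\beta\,\theta(x).
\]
Disjointness of supports gives $|\alpha x_A+\beta x_B|\ge |\alpha|\,|x_A|$ pointwise, hence $(\alpha x_A+\beta x_B)^*\ge |\alpha|(x_A)^*$ and $\theta(\alpha x_A+\beta x_B)\ge |\alpha|\theta(x)$; combined with the distance identity above this yields $|\phi_i(w)|\le \|w\|_{\varphi,\omega}$ on $M$, with the supremum attained, so $\|\phi_i\|_M=1$. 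The Hahn--Banach theorem extends $\phi_i$ to $s_i\in(\Lambda_{\varphi,\omega})^*$ with $\|s_i\|=1$; since $\phi_i$ vanishes on $E_{\varphi,\omega}\subset M$ the extensions do too, so $s_1,s_2\in F$. Moreover $s_i(x)=\phi_i(x_A+x_B)=\theta(x)$ for both $i$, while $s_1(x_A)=\theta(x)\ne 0=s_2(x_A)$ shows $s_1\ne s_2$.

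The main technical obstacle is the splitting step. The hypothesis $\theta(x)>0$ only says that $\rho_{\varphi,\omega}(x/\lambda)$ diverges for $\lambda$ just below $\theta(x)$, and this divergence can be concentrated near $0$, near $\infty$, or both, so the partition $A\sqcup B$ must be engineered to let each half inherit the divergent contribution on every relevant scale. Translating the density lower bounds on $A$ and $B$ into the pointwise rearrangement estimate $(x\chi_A)^*(t)\ge x^*(t/c)$ uniformly in $t$ requires a careful dyadic construction tailored to $\mathrm{supp}(x)$ and the behaviour of $x^*$ and $\omega$; a blunt bisection like $A=[0,T/2]$, $B=[T/2,T]$ fails because at most one half can retain the singularity at a given endpoint.
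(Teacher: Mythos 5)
The paper does not prove this lemma; it is imported verbatim from \cite{Chen1996} (Theorem 2.48, stated there for Orlicz spaces), so there is no in-paper proof to compare against. Your argument is a correct, self-contained adaptation of the classical one to the Orlicz--Lorentz setting, and the skeleton (distance identity $d(x,E_{\varphi,\omega})=\theta(x)$, which is the paper's Lemma 12; a splitting $\mathrm{supp}\,x=A\sqcup B$ with $\theta(x\chi_A)=\theta(x\chi_B)=\theta(x)$; Hahn--Banach from $E_{\varphi,\omega}+\mathbb{R}x_A+\mathbb{R}x_B$) is exactly the standard route. The genuinely new ingredient you need, and correctly identify, is the splitting: in $\Lambda_{\varphi,\omega}$ the modular sees only $x^*$, so the split must preserve the singularity after rearrangement. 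Your density condition does the job: if $\mu(A\cap(0,u))\ge cu$ for all $u$ (achievable by interleaving dyadic blocks over all scales $2^n$, $n\in\mathbb{Z}$, simultaneously for $A$ and $B$), then $\mu_{x\chi_A}(\lambda)\ge c\,\mu_x(\lambda)$ gives $(x\chi_A)^*(t)\ge x^*(t/c)$, and the substitution $s=t/c$ together with $\omega(cs)\ge\omega(s)$ yields $\rho_{\varphi,\omega}(x\chi_A/\lambda)\ge c\,\rho_{\varphi,\omega}(x/\lambda)$, hence $\theta(x\chi_A)=\theta(x)$. Two small points to tighten: (i) the functionals $\phi_i$ must be checked to be well defined on the algebraic sum $E_{\varphi,\omega}+\mathbb{R}x_A+\mathbb{R}x_B$, but this is immediate from the same inequality $\theta(\alpha x_A+\beta x_B)\ge\max(|\alpha|,|\beta|)\theta(x)$ you use for the norm bound; (ii) the supremum giving $\|\phi_i\|_M=1$ is approached along $x_A-z_n$ with $\|x_A-z_n\|\to\theta(x)$ rather than attained, though only boundedness is needed for Hahn--Banach and for the conclusion of the lemma.
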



\begin{lemma}\cite{Wang2024}\\
	For arbitrary $f\in (\Lambda_{\varphi, \omega}^{o})^{*}$, $f=L_{v}+s$, we have $\|f\|=\|v\|_{\mathcal{M}_{\psi, \omega}}+\|s\|$.\\
\end{lemma}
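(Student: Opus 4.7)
The inequality $\|f\| \leq \|v\|_{\mathcal{M}_{\psi,\omega}} + \|s\|$ is immediate from the triangle inequality in $(\Lambda_{\varphi,\omega}^o)^*$ together with the isometric identification $\|L_v\|_{(\Lambda_{\varphi,\omega}^o)^*} = \|v\|_{\mathcal{M}_{\psi,\omega}}$ supplied by Lemma 1. So the content of the assertion is concentrated in the reverse inequality $\|f\| \geq \|v\|_{\mathcal{M}_{\psi,\omega}} + \|s\|$, and that is where I would spend all of the effort.

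To attack the lower bound I would fix $\varepsilon>0$ and try to construct a single $u\in B(\Lambda_{\varphi,\omega}^o)$ with $f(u)\geq \|v\|_{\mathcal{M}_{\psi,\omega}}+\|s\|-\varepsilon$. The natural decomposition is $u=u_1+u_2$ with $u_1,u_2$ of essentially disjoint support, where $u_1\in E_{\varphi,\omega}$ is chosen to witness the regular part and $u_2\in \Lambda_{\varphi,\omega}\setminus E_{\varphi,\omega}$ is chosen to witness the singular part. Since every singular functional vanishes on $E_{\varphi,\omega}$ (this is the content of the variational formula for $\|s\|$ in the quoted lemma of Chen), one has $s(u_1)=0$, so $f(u_1+u_2)=L_v(u_1)+L_v(u_2)+s(u_2)$, and the task is to arrange that $L_v(u_2)$ does not swallow the contribution from $s(u_2)$.

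Concretely, for the regular piece I would truncate $v$ as $v_n=v\chi_{[0,n]\cap\{|v|\leq n\}}$, fix $n$ so large that the remaining Köthe-dual mass is below $\varepsilon/3$, and pick $u_1$ supported in $[0,n]$ with $\|u_1\|_{\varphi,\omega}^o\leq 1$ realising $\int v_n\, u_1\geq \|v_n\|_{\mathcal{M}_{\psi,\omega}}-\varepsilon/3$, which is possible because the Köthe pairing is approximated on bounded-support $E_{\varphi,\omega}$-elements. For the singular piece I would use the variational characterisation $\|s\|=\sup\{s(u)/\theta(u)\}$ from Chen's lemma together with the existence of two distinct singular functionals attaining $\theta$ (Lemma 10) to pick $u_2$ of support disjoint from $[0,n]$ (say inside $[n,\infty)$) with $s(u_2)\geq \|s\|-\varepsilon/3$ and $L_v(u_2)$ small (using that $v_n$ is supported in $[0,n]$ so $L_{v_n}(u_2)=0$). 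Finally, after a mild rescaling by a factor $1/(1+\delta)$, $u_1+u_2$ should lie in the unit ball and give the desired lower estimate.

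The main obstacle, as I see it, is not the existence of $u_1$ and $u_2$ individually but the control of the Orlicz norm $\|u_1+u_2\|_{\varphi,\omega}^o$ of the sum. Because the norm is built from the decreasing rearrangement, disjointness of supports does not translate into additivity of rearrangements: $(u_1+u_2)^*$ may genuinely interleave $u_1^*$ and $u_2^*$, and hence its $\rho_{\varphi,\omega}$-modular is not simply $\rho_{\varphi,\omega}(u_1)+\rho_{\varphi,\omega}(u_2)$. Overcoming this will require a careful measure-preserving-transformation argument (using the resonant structure of $\Lambda_{\varphi,\omega}$ recalled in the preliminaries) to rearrange $u_2$ far out to the right so that $(u_1+u_2)^*\approx u_1^*$ on $[0,n]$ and $(u_1+u_2)^*\approx u_2^*$ beyond, together with the Orlicz-norm formula $\|\cdot\|^o=\inf_k \frac{1}{k}(1+\rho_{\varphi,\omega}(k\cdot))$ to show that the infimising $k$ for the sum is close to that of $u_1$; this rearrangement-sensitivity step is what I expect to be the technical crux of the proof.
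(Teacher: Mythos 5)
First, a point of reference: the paper itself contains no proof of this lemma---it is imported wholesale from \cite{Wang2024}---so there is no in-text argument to compare yours against, and I can only judge the sketch on its own terms. Your overall architecture (triangle inequality for $\|f\|\le\|v\|_{\mathcal{M}_{\psi,\omega}}+\|s\|$; for the reverse inequality a near-norming element $u_1+u_2$ with disjointly supported regular and singular witnesses, exploiting that $s$ vanishes on $E_{\varphi,\omega}$) is the standard and correct skeleton. But the step you single out as the technical crux is actually the easy part: for disjointly supported $a,b$ one has $\varphi(|a+b|)=\varphi(|a|)+\varphi(|b|)$ pointwise, $\int_0^t(g+h)^*\,ds\le\int_0^tg^*\,ds+\int_0^th^*\,ds$ for all $t$, and Hardy's lemma against the decreasing weight $\omega$ then gives $\rho_{\varphi,\omega}(a+b)\le\rho_{\varphi,\omega}(a)+\rho_{\varphi,\omega}(b)$ outright; no measure-preserving relocation of $u_2$ ``far to the right'' is needed to bound the modular of the sum from above.

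The genuine gap is the interaction between the Amemiya scaling constant and the singular witness, which your ``mild rescaling by $1/(1+\delta)$'' does not address. By the quoted lemma of Chen, $s(u_2)\le\|s\|\,\theta(u_2)$, so $s(u_2)\ge\|s\|-\varepsilon$ forces $\theta(u_2)\ge1-\varepsilon/\|s\|$; hence $\rho_{\varphi,\omega}(ku_2)=\infty$, and a fortiori $\rho_{\varphi,\omega}(k(u_1+u_2))=\infty$, for every $k>1/\theta(u_2)\approx1$. The formula $\|z\|_{\varphi,\omega}^{o}=\inf_{k>0}\frac1k\left(1+\rho_{\varphi,\omega}(kz)\right)$ therefore only gives information at $k\le1+o(1)$, where (using $\rho_{\varphi,\omega}(kz)\ge k\rho_{\varphi,\omega}(z)$ for $k\ge1$) it yields $\|u_1+u_2\|_{\varphi,\omega}^{o}\ge\frac{1}{1+o(1)}+\rho_{\varphi,\omega}(u_1+u_2)$. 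Thus $\|u_1+u_2\|_{\varphi,\omega}^{o}\le1$ forces $\rho_{\varphi,\omega}(u_1)$ itself to be small, whereas a generic near-norming $u_1$ for $L_v$ has $\rho_{\varphi,\omega}(u_1)$ bounded away from $0$ (its own optimal constant $k_1=1+\rho_{\varphi,\omega}(k_1u_1)$ typically exceeds $1$). As written, your construction either leaves the unit ball or, after the rescaling you propose, proves only $\|f\|\ge\|v\|_{\mathcal{M}_{\psi,\omega}}+\|s\|/k_1$. Producing elements that simultaneously have nearly vanishing $\varphi$-modular, pair with $v$ to nearly $\|v\|_{\mathcal{M}_{\psi,\omega}}$, and carry $\theta$-mass close to $1$ is the real content of the cited result (and is where $\varphi\notin\Delta_2$ must be exploited, since otherwise $s=0$ and the lemma is trivial); your proposal does not contain the idea needed to produce them. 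A smaller defect: the singular witness cannot in general be supported in $[n,\infty)$, since deleting its restriction to $[0,n]$ changes $s(u_2)$ unless that restriction lies in $E_{\varphi,\omega}$; an unbounded small-measure piece inside $[0,n]$ must be retained or relocated by symmetry of the space.
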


\begin{lemma}
	For any $f\in \Lambda_{\varphi, \omega}$, $d(f)=d^{o}(f)=\theta(f)$, where
	$$
	d(f)=\inf\{\|f-f_{e}\|: \: f_{e}\in E_{\varphi,\omega}\};\;d^{o}(f)=\inf\{\|f-f_{e}\|_{\varphi, \omega}^{o}:\: f_{e}\in E_{\varphi, \omega}\}.
	$$
\end{lemma}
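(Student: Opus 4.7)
I will establish the sandwich $\theta(f)\le d(f)\le d^{o}(f)\le \theta(f)$. The middle inequality $d(f)\le d^{o}(f)$ follows from the general bound $\|\cdot\|_{\varphi,\omega}\le \|\cdot\|^{o}_{\varphi,\omega}$, which in turn is obtained from formula (1) by choosing $\lambda=(1+\rho_{\varphi,\omega}(kf))/k$ and checking, using convexity of the modular applied to $f/\lambda=\tfrac{1}{1+\rho_{\varphi,\omega}(kf)}\cdot(kf)+\tfrac{\rho_{\varphi,\omega}(kf)}{1+\rho_{\varphi,\omega}(kf)}\cdot 0$, that $\rho_{\varphi,\omega}(f/\lambda)<1$. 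It remains to show the two outer inequalities.

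\textbf{Lower bound $\theta(f)\le d(f)$.} For any $f_{e}\in E_{\varphi,\omega}$, any $\mu>\|f-f_{e}\|_{\varphi,\omega}$ and any $\eta>0$, I use the identity
$$
\frac{f}{\mu+\eta}=\frac{\mu}{\mu+\eta}\cdot\frac{f-f_{e}}{\mu}+\frac{\eta}{\mu+\eta}\cdot\frac{f_{e}}{\eta}
$$
together with convexity of $\rho_{\varphi,\omega}$ to obtain
$$
\rho_{\varphi,\omega}\!\left(\frac{f}{\mu+\eta}\right)\le \frac{\mu}{\mu+\eta}+\frac{\eta}{\mu+\eta}\,\rho_{\varphi,\omega}(f_{e}/\eta)<\infty,
$$
since $\rho_{\varphi,\omega}((f-f_{e})/\mu)\le 1$ and $f_{e}\in E_{\varphi,\omega}$ makes $\rho_{\varphi,\omega}(f_{e}/\eta)$ finite. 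Hence $\theta(f)\le \mu+\eta$; sending $\eta\to 0^{+}$, then $\mu\to \|f-f_{e}\|_{\varphi,\omega}$, and finally taking $\inf_{f_{e}\in E_{\varphi,\omega}}$ yields $\theta(f)\le d(f)$. The convexity of $\rho_{\varphi,\omega}$ used here is a consequence of the majorization $(\alpha f+(1-\alpha)g)^{*}\prec \alpha f^{*}+(1-\alpha)g^{*}$ combined with Hardy--Littlewood--P\'olya applied to the convex, nondecreasing $\varphi$ and the decreasing weight $\omega$.

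\textbf{Upper bound $d^{o}(f)\le \theta(f)$.} I will truncate. Define $f_{N}(t):=\operatorname{sign}(f(t))\cdot\min(|f(t)|,N)\cdot \chi_{[0,N]}(t)$. Then $|f_{N}|\le N$ with support in $[0,N]$, so $\rho_{\varphi,\omega}(\lambda f_{N})\le \varphi(\lambda N)W(N)<\infty$ for every $\lambda>0$, i.e., $f_{N}\in E_{\varphi,\omega}$. Setting $g_{N}:=f-f_{N}$, the inequality $|g_{N}|\le|f|$ gives $g_{N}^{*}\le f^{*}$, and the distribution function is controlled by
$$
\mu_{g_{N}}(\lambda)\le \mu_{f}(\lambda+N)+\mu(\{|f|>\lambda\}\cap(N,\infty)),
$$
both terms tending to $0$ as $N\to\infty$ (the first because $\mu_{f}(N)\downarrow 0$, the second because $\mu_{f}(\lambda)<\infty$ for $\lambda>0$). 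Therefore $g_{N}^{*}(t)\to 0$ for every $t>0$. For any $\lambda>\theta(f)$ the integrand $\varphi(g_{N}^{*}/\lambda)\omega$ is dominated by $\varphi(f^{*}/\lambda)\omega\in L^{1}$, so dominated convergence gives $\rho_{\varphi,\omega}(g_{N}/\lambda)\to 0$. Applying formula (1) with $k=1/\lambda$ produces $\|g_{N}\|^{o}_{\varphi,\omega}\le \lambda\bigl(1+\rho_{\varphi,\omega}(g_{N}/\lambda)\bigr)\to \lambda$, whence $d^{o}(f)\le\lambda$; letting $\lambda\downarrow\theta(f)$ closes the sandwich.

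\textbf{Main obstacle.} The technical heart of the argument is the pointwise decay $g_{N}^{*}(t)\to 0$, which depends on splitting $\{|g_{N}|>\lambda\}$ into its contributions from $[0,N]\cap\{|f|>\lambda+N\}$ and from $(N,\infty)\cap\{|f|>\lambda\}$, and on the fact that $f\in \Lambda_{\varphi,\omega}$ together with $\int_{0}^{\infty}\omega=\infty$ forces $f^{*}(t)\to 0$ at infinity (so that $\mu_{f}(\lambda)<\infty$ for every $\lambda>0$). The other nonobvious ingredient is convexity of the modular $\rho_{\varphi,\omega}$, which relies on the majorization step mentioned above and is needed both for $\|\cdot\|_{\varphi,\omega}\le\|\cdot\|^{o}_{\varphi,\omega}$ and for the lower bound on $d(f)$.
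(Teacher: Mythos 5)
Your argument is correct. Note that the paper states this lemma without any proof (and without a citation), so there is nothing to compare your route against; what you give is the natural adaptation of the classical Orlicz-space fact $\mathrm{dist}(f,E_{\varphi})=\theta(f)$ (cf.\ Chen's monograph, which the paper cites for related lemmas). The sandwich $\theta(f)\le d(f)\le d^{o}(f)\le\theta(f)$ is the right skeleton: the middle inequality from $\|\cdot\|_{\varphi,\omega}\le\|\cdot\|^{o}_{\varphi,\omega}$ is standard; your lower bound correctly exploits convexity of the modular $\rho_{\varphi,\omega}$ (which does hold, via $(\alpha f+(1-\alpha)g)^{*}\prec\alpha f^{*}+(1-\alpha)g^{*}$ together with the Hardy--Littlewood--P\'olya--Calder\'on monotonicity of $h\mapsto\int\varphi(h^{*})\omega$ under submajorization --- this is exactly what makes the Luxemburg norm a norm in \cite{Kaminska199029}); and your truncation $f_{N}=\operatorname{sign}(f)\min(|f|,N)\chi_{[0,N]}$ with the distribution-function estimate $\mu_{g_{N}}(\lambda)\le\mu_{f}(\lambda+N)+\mu(\{|f|>\lambda\}\cap(N,\infty))$ correctly yields $g_{N}^{*}(t)\to0$ pointwise, after which dominated convergence and formula (1) with $k=1/\lambda$ close the loop. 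The only hypotheses you rely on --- local integrability of $\omega$ (so $W(N)<\infty$), $W(\infty)=\infty$ (so $f^{*}(\infty)=0$ and hence $\mu_{f}(\lambda)<\infty$ for all $\lambda>0$), and continuity of $\varphi$ at $0$ --- are all part of the paper's standing assumptions, so the proof is complete.
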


\backmatter

\begin{lemma}\cite[Theroem 2.7 in Chapter 1]{Bennett1953}
Every Banach function space $X$ coincides with its second associate space $X^{\prime \prime}$. In other words, a function $f$ belongs to $X$ if and only if it belongs to $X^{\prime \prime}$, and in that case
\begin{equation*}
\|f\|_{X}=\|f\|_{X^{\prime \prime}}.
\end{equation*}
\end{lemma}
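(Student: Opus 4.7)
The plan is to prove the two inclusions $X \subseteq X''$ and $X'' \subseteq X$ with equality of norms. The easy direction is $X \subseteq X''$: for $f \in X$ and any $g \in X'$ with $\|g\|_{X'} \leq 1$, the Hölder-type inequality for Banach function spaces (which is built into the very definition of the associate norm) gives $\int |fg|\,d\mu \leq \|f\|_X \|g\|_{X'} \leq \|f\|_X$. Taking the supremum over all such $g$ yields $\|f\|_{X''} \leq \|f\|_X$, so in particular $f \in X''$. This direction is essentially a definition-chase and does not use any deep property of $X$.

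The harder inclusion $X'' \subseteq X$ with $\|f\|_X \leq \|f\|_{X''}$ is the content of the Lorentz--Luxemburg theorem. The strategy is to first establish the auxiliary fact that for any nonnegative $f \in X$,
\begin{equation*}
\|f\|_X = \sup\left\{\int fg\,d\mu : g \in X',\ g \geq 0,\ \|g\|_{X'} \leq 1\right\}.
\end{equation*}
Combined with the easy inclusion, this already identifies $\|f\|_X$ with $\|f\|_{X''}$ for $f \in X$. The proof of this sup-formula uses the Fatou property of the Banach function norm together with a Hahn--Banach separation argument on the cone of nonnegative functions: if the supremum were strictly less than $\|f\|_X$, one could separate $f$ from the scaled unit ball of $X'$ by a functional realized (via the Riesz representation on simple functions and a monotone extension) by some $g \in X'$, contradicting the assumed bound.

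Once the sup-formula is in hand, I would handle a general $f \in L^0$ with $\|f\|_{X''} < \infty$ by a truncation-and-monotone-convergence argument. Write $|f|$ as the pointwise increasing limit of simple functions $f_n$ of finite-measure support. Each $f_n$ belongs to $X$ (simple functions with bounded support do, by the standard axioms), and by the inequality already proved for elements of $X$, $\|f_n\|_X = \|f_n\|_{X''} \leq \|f\|_{X''}$. The Fatou property of the norm on $X$ then gives $|f| \in X$ with $\|f\|_X \leq \|f\|_{X''}$, completing the proof.

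The main obstacle is precisely the sup-representation of $\|f\|_X$ by elements of $X'$, i.e.\ the Lorentz--Luxemburg identity. The subtlety there is that the supremum on the right is taken over a proper subset of the dual (only the order-continuous functionals coming from $X'$, not arbitrary singular functionals in $X^{*}$), so a naive appeal to Hahn--Banach in $X^{*}$ is not enough; one must restrict the separating functional to the associate space, which is where the Fatou property is genuinely used. Everything else is a routine consequence of the axioms of a Banach function space.
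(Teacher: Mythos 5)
The paper offers no proof of this lemma---it is quoted verbatim from Bennett and Sharpley (Theorem 2.7 of Chapter 1)---and your sketch is precisely the standard Lorentz--Luxemburg argument given there: H\"older's inequality for $X\subseteq X''$, then truncation to bounded functions of finite-measure support, a Hahn--Banach separation in $L^1$, and the Fatou property to pass to the limit. The only slip is in the separation step: one separates $f$ from the unit ball of $X$ (intersected with $L^1$ of a finite-measure set), obtaining a bounded function $\phi$ which is then shown to satisfy $\|\phi\|_{X'}\leq 1$; as written, ``separating $f$ from the scaled unit ball of $X'$'' pairs objects living on opposite sides of the duality and should be corrected when the argument is carried out in full.
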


\begin{lemma}\cite[Proposition 1.7 in Chapter 1]{Bennett1953}
Suppose $f, g$ and $f_{n}$, $(n=1,2...)$, belong to $L_{0}$.
The decreasing rearrangement $f^{*}$ is a nonnegative, decreasing, right-continuous function on $[0, \infty)$. Furthermore,
\begin{equation*}
|f|\leq \liminf_{n\rightarrow\infty}|f_{n}|\;\mu-a.e.\;\; \Rightarrow\; f^{*}\leq \liminf_{n\rightarrow\infty}f_{n}^{*}.
\end{equation*}
In particular,
\begin{equation*}
|f_{n}|\uparrow |f|\;\mu-a.e. \;\;\Rightarrow \;f_{n}^{*}\uparrow f^{*}.
\end{equation*}
\end{lemma}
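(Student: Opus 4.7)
The plan is to prove the three assertions in order: the structural properties of $f^{*}$, the lower semi-continuity statement under a.e.\ liminf, and then deduce the monotone convergence assertion as a corollary. All three rely on passing between the distribution function $\mu_{f}$ and the rearrangement $f^{*}$ via the definition $f^{*}(t)=\inf\{\lambda>0:\mu_{f}(\lambda)\leq t\}$.

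For the structural properties, nonnegativity is immediate since $f^{*}$ is an infimum of strictly positive numbers. For monotonicity, I would observe that if $t_{1}<t_{2}$ then $\{\lambda>0:\mu_{f}(\lambda)\leq t_{1}\}\subseteq \{\lambda>0:\mu_{f}(\lambda)\leq t_{2}\}$, so the infimum over the larger set is no greater, giving $f^{*}(t_{2})\leq f^{*}(t_{1})$. Right-continuity is the slightly more delicate piece: pick $s_{n}\downarrow t$ and set $L=\lim f^{*}(s_{n})\leq f^{*}(t)$; for the reverse, for any $\lambda>L$ we have $\lambda>f^{*}(s_{n})$ for all large $n$, hence $\mu_{f}(\lambda)\leq s_{n}$, and letting $s_{n}\downarrow t$ gives $\mu_{f}(\lambda)\leq t$, so $\lambda\geq f^{*}(t)$. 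Taking $\lambda\downarrow L$ produces $L\geq f^{*}(t)$.

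For the liminf assertion, the main step is a set-theoretic Fatou argument at the level of level sets. From $|f|\leq\liminf|f_{n}|$ a.e., if $|f(s)|>\lambda$ then $\liminf|f_{n}(s)|>\lambda$, which means $|f_{n}(s)|>\lambda$ for all sufficiently large $n$; thus
\begin{equation*}
\{|f|>\lambda\}\ \subseteq\ \bigcup_{N}\bigcap_{n\geq N}\{|f_{n}|>\lambda\}\ =\ \liminf_{n}\{|f_{n}|>\lambda\}
\end{equation*}
up to a null set, and the set-theoretic Fatou lemma $\mu(\liminf E_{n})\leq\liminf\mu(E_{n})$ yields $\mu_{f}(\lambda)\leq\liminf_{n}\mu_{f_{n}}(\lambda)$ for every $\lambda>0$. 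To transfer this to $f^{*}$, fix $t>0$ and set $\alpha=\liminf_{n}f_{n}^{*}(t)$; assuming $\alpha<\infty$, for any $\lambda>\alpha$ there are infinitely many $n$ with $f_{n}^{*}(t)<\lambda$, whence $\mu_{f_{n}}(\lambda)\leq t$ for those $n$, so $\liminf_{n}\mu_{f_{n}}(\lambda)\leq t$, and therefore $\mu_{f}(\lambda)\leq t$, giving $f^{*}(t)\leq\lambda$. Letting $\lambda\downarrow\alpha$ produces $f^{*}(t)\leq\alpha$.

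Finally, the monotone convergence statement follows as a short corollary. From $|f_{n}|\uparrow|f|$ a.e., monotonicity of the rearrangement (applied pointwise in the distribution function via $\mu_{f_{n}}\leq\mu_{f_{m}}\leq\mu_{f}$ for $n\leq m$) shows that $f_{n}^{*}$ is increasing with $f_{n}^{*}\leq f^{*}$, so the pointwise limit $g:=\lim_{n}f_{n}^{*}$ exists and $g\leq f^{*}$. Since $|f|=\liminf|f_{n}|$ a.e., the previous part yields $f^{*}\leq\liminf f_{n}^{*}=g$, so $g=f^{*}$. I expect the right-continuity step to be the most delicate part of the write-up, while the liminf inequality is the technical core; the monotone convergence conclusion is essentially a sandwich argument between the easy inequality $f_{n}^{*}\leq f^{*}$ and the liminf bound just proved.
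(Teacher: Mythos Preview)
Your argument is correct and follows the standard route (essentially the one in Bennett--Sharpley): establish right-continuity of $f^{*}$ directly from the definition, prove $\mu_{f}(\lambda)\leq\liminf_{n}\mu_{f_{n}}(\lambda)$ via the set-theoretic Fatou inclusion $\{|f|>\lambda\}\subset\liminf_{n}\{|f_{n}|>\lambda\}$, transfer this to $f^{*}$ by the infimum definition, and then sandwich for the monotone case. Note, however, that the paper does not supply its own proof of this lemma; it is quoted verbatim from \cite[Chapter~1, Proposition~1.7]{Bennett1953} and used as a black box, so there is no in-paper argument to compare against---your write-up simply fills in what the cited reference contains.
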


\begin{lemma}\cite[Theorem 2.2 in Chapter 1]{Bennett1953}
If $f$ and $g$ belong to $L_{0}$. Then
\begin{equation*}
\int_{R}|fg|d\mu\leq \int_{0}^{\infty}f^{*}(t)g^{*}(t)dt.
\end{equation*}
\end{lemma}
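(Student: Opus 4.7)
The plan is to prove the Hardy--Littlewood inequality using the layer-cake (distribution function) representation of the integrand, which reduces both sides of the inequality to double integrals over $(s,t)\in(0,\infty)^{2}$ that can be compared pointwise.

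First, I would replace $f$ and $g$ by $|f|$ and $|g|$ without loss of generality, and invoke the layer-cake identity $|f(x)|=\int_{0}^{\infty}\chi_{\{|f|>s\}}(x)\,ds$ and analogously for $|g|$. Multiplying and applying Fubini to the nonnegative integrand yields
\begin{equation*}
\int_{R}|fg|\,d\mu
=\int_{0}^{\infty}\!\!\int_{0}^{\infty}\mu\bigl(\{|f|>s\}\cap\{|g|>t\}\bigr)\,ds\,dt.
\end{equation*}
Since the measure of an intersection is at most the measure of either set, the integrand is bounded above by $\min\bigl(\mu_{f}(s),\mu_{g}(t)\bigr)$ for every $(s,t)$.

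Next I would show that the right-hand side of the inequality equals exactly the double integral of this minimum. Applying the same layer-cake identity to $f^{*}$ and $g^{*}$, one has
\begin{equation*}
\int_{0}^{\infty}f^{*}(u)g^{*}(u)\,du
=\int_{0}^{\infty}\!\!\int_{0}^{\infty}\mu\bigl(\{f^{*}>s\}\cap\{g^{*}>t\}\bigr)\,ds\,dt.
\end{equation*}
The key geometric fact is that the super-level sets of the decreasing rearrangements are intervals of the form $[0,\mu_{f}(s))$ and $[0,\mu_{g}(t))$, because $f^{*}$ and $g^{*}$ are decreasing and equimeasurable with $|f|$ and $|g|$ respectively. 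Consequently $\mu\bigl(\{f^{*}>s\}\cap\{g^{*}>t\}\bigr)=\min\bigl(\mu_{f}(s),\mu_{g}(t)\bigr)$, so the second double integral is precisely the upper bound obtained in the first step. Combining the two displays gives the inequality.

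The main obstacle is the identification $\{f^{*}>s\}=[0,\mu_{f}(s))$, which requires the right-continuity of $f^{*}$ (stated in the previous lemma from the excerpt) together with the defining relation $f^{*}(t)=\inf\{\lambda>0:\mu_{f}(\lambda)\leq t\}$; once this is in hand, the rest is an application of Fubini and the trivial bound on measures of intersections. A minor technicality is handling the case where $\mu_{f}(s)$ or $\mu_{g}(t)$ is infinite, but since the inequality is then either trivially $\infty\leq\infty$ or reduces to a finite-measure case via truncation, it does not affect the argument.
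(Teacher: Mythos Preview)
Your argument is correct and is the standard layer-cake proof of the Hardy--Littlewood inequality. Note that the paper itself supplies no proof of this lemma: it is quoted verbatim from the cited reference (Bennett--Sharpley, Chapter~2, Theorem~2.2), and the proof given there is essentially the one you have written---Fubini applied to the layer-cake representation, followed by the observation that the super-level sets of $f^{*}$ and $g^{*}$ are nested intervals so that their intersection has measure $\min(\mu_{f}(s),\mu_{g}(t))$.
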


\begin{lemma}\cite[Theorem 14]{WangDiExposedpoint}
	Let $\varphi$ be an Orlicz function and $\omega$ be a decreasing weight.
	$f=L_{v}+s(0\neq v\in \mathcal{M}_{\psi, \omega}, s\in F)$ is norm attainable at $x\in S(\Lambda_{\varphi, \omega}^{o})$ ($K(x)\neq \emptyset$) if and only if\\
	(1)$v(t)=v^{*}(\sigma(t))sign x(t)$ \\
	(2)$s(kx)=\|s\|$.\\
	(3)$P_{\varphi, \omega}(\frac{v}{\|f\|})+\frac{\|s\|}{\|f\|}=1$.\\
	(4)$\int_{0}^{\infty}\frac{kv(t)x(t)}{\|f\|}dt=\rho_{\varphi, \omega}(kx)+P_{\psi,\omega}(\frac{v}{\|f\|})$ where
	$k\in K(x)$.\\
\end{lemma}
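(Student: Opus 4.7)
The plan is to prove the biconditional by splitting $f(x)=\|f\|$ into its regular and singular contributions and tracking where each inequality in the duality pairing becomes an equality. I will lean on the decomposition $\|f\|=\|v\|_{\mathcal{M}_{\psi,\omega}}+\|s\|$ from Lemma 11 and on the level-function representation $P_{\psi,\omega}(v)=\int\psi(v^*/\omega^{v^*})\omega^{v^*}\,dt$ from Lemma 1 throughout.

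For sufficiency, I begin with condition (1), which makes the Hardy--Littlewood chain $L_v(x)=\int v(t)x(t)\,dt=\int|v(t)||x(t)|\,dt=\int v^*(t)x^*(t)\,dt$ sharp via the common measure-preserving $\sigma$. I then read (4) as $kL_v(x)=\|f\|\rho_{\varphi,\omega}(kx)+\|f\|P_{\psi,\omega}(v/\|f\|)$; substituting $\rho_{\varphi,\omega}(kx)=k-1$ (from $k\in K(x)$ together with $\|x\|_{\varphi,\omega}^o=1$) and $P_{\psi,\omega}(v/\|f\|)=1-\|s\|/\|f\|$ (reading the modular subscript in (3) as $\psi$) pins down $L_v(x)$ explicitly. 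Linearity together with (2) supplies the matching value of $s(x)$, and summing the two pieces reproduces $f(x)=\|f\|$.

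For necessity, assume $f(x)=\|f\|$. The bounds $L_v(x)\leq\|v\|_{\mathcal{M}_{\psi,\omega}}\|x\|_{\varphi,\omega}^o$ and $s(x)\leq\|s\|\|x\|_{\varphi,\omega}^o$, combined with Lemma 11, force both $L_v(x)=\|v\|_{\mathcal{M}_{\psi,\omega}}$ and $s(x)=\|s\|$; the latter together with $k\in K(x)$ and linearity gives (2). Collapsing the Hardy--Littlewood chain $L_v(x)\leq\int v^*x^*\,dt\leq\|v\|_{\mathcal{M}_{\psi,\omega}}$ delivers (1). For (3) and (4), I apply the pointwise Young inequality
$$kx^*(t)\cdot\frac{v^*(t)}{\omega^{v^*}(t)}\leq\varphi(kx^*(t))+\psi\!\left(\frac{v^*(t)}{\omega^{v^*}(t)}\right),$$
multiply by $\omega^{v^*}(t)$, and integrate; the left-hand side becomes $k\int v^*x^*\,dt=k\|v\|_{\mathcal{M}_{\psi,\omega}}$ and the right-hand side becomes $\int\varphi(kx^*)\omega^{v^*}\,dt+P_{\psi,\omega}(v)$. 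Forcing equality along this chain after normalising $v$ by $\|f\|$ and invoking the convex-modular contraction $P_{\psi,\omega}(\lambda v)\leq\lambda P_{\psi,\omega}(v)$ for $0\leq\lambda\leq 1$ extracts (3) and (4) simultaneously.

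The principal obstacle sits inside the Young-equality step. The integral $\int\varphi(kx^*)\omega^{v^*}\,dt$ that arises naturally differs from $\rho_{\varphi,\omega}(kx)=\int\varphi(kx^*)\omega\,dt$ on the maximal level intervals of $v^*$, where $\omega^{v^*}=v^*/R(a_n,b_n)$. I expect to argue that $x^*$ must be constant on each such interval: any non-trivial rearrangement of $x^*$ inside the interval would strictly increase $\int v^*x^*\,dt$ beyond $\|v\|_{\mathcal{M}_{\psi,\omega}}$, contradicting the already established equality $L_v(x)=\|v\|_{\mathcal{M}_{\psi,\omega}}$. Once this constancy is in hand, $\int\varphi(kx^*)\omega^{v^*}\,dt$ collapses to $\rho_{\varphi,\omega}(kx)$, and the pointwise Young equality condition $v^*(t)/\omega^{v^*}(t)\in[p_-(kx^*(t)),p(kx^*(t))]$ closes the derivation of (3) and (4).
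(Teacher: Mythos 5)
This lemma is quoted from \cite{WangDiExposedpoint} and the paper contains no proof of it, so your proposal can only be judged on its own terms; on those terms the necessity direction has a genuine gap at condition (2). You bound the two pieces at scale $1$, writing $L_{v}(x)\leq\|v\|_{\mathcal{M}_{\psi,\omega}}\|x\|_{\varphi,\omega}^{o}$ and $s(x)\leq\|s\|\|x\|_{\varphi,\omega}^{o}$, and conclude $s(x)=\|s\|$; but then linearity gives $s(kx)=k\|s\|$, not $\|s\|$. Since $k\in K(x)$ and $\|x\|_{\varphi,\omega}^{o}=1$ force $k=1+\rho_{\varphi,\omega}(kx)>1$ for $x\neq0$, your intermediate conclusion $s(x)=\|s\|$ is actually \emph{incompatible} with the target condition $s(kx)=\|s\|$ whenever $s\neq0$: the formula $\|s\|=\sup\{s(u):\rho_{\varphi,\omega}(u)<\infty\}$ together with $\rho_{\varphi,\omega}(kx)=k-1<\infty$ gives the sharper bound $s(x)\leq\|s\|/k<\|s\|$. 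The estimate must therefore be run at scale $k$ throughout: write $f(x)=\frac{1}{k}\bigl(L_{v}(kx)+s(kx)\bigr)$, bound $s(kx)\leq\|s\|$ by the singular-functional formula just quoted, bound $L_{v}(kx)\leq\int(kx)^{*}v^{*}\leq\|f\|\bigl(\rho_{\varphi,\omega}(kx)+P_{\psi,\omega}(v/\|f\|)\bigr)$ by Young's inequality applied to $v/\|f\|$, and close the chain with the convexity estimate $P_{\psi,\omega}(v/\|f\|)\leq\|v\|_{\mathcal{M}_{\psi,\omega}}/\|f\|=1-\|s\|/\|f\|$, which is exactly what turns the string of inequalities into the equalities (1)--(4). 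Your scale-$1$ decomposition cannot produce (2), and the sharper bound $s(x)\le\|s\|/k$ shows it also over-attributes mass to the singular part.

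Two smaller points. First, your sufficiency computation is correct (and condition (1) is not even needed there, since (4) already fixes the value of $\int vx$); reading the subscript in (3) as $\psi$ is the right call. Second, you correctly identify the $\int\varphi(kx^{*})\omega^{v^{*}}$ versus $\rho_{\varphi,\omega}(kx)$ discrepancy as the delicate step, but your proposed resolution is not sound as stated: $x^{*}$ is already the decreasing rearrangement, so ``a non-trivial rearrangement of $x^{*}$ inside the interval would strictly increase $\int v^{*}x^{*}$'' is not a meaningful move. The standard way to pass between the two integrals is the submajorization $\omega^{v^{*}}\prec\omega$ together with Hardy's lemma applied to the decreasing function $\varphi(kx^{*})$, with the equality case of that lemma supplying the constancy you want; as written, this step of your argument is an announced intention rather than a proof.
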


\begin{lemma}\cite[Theorem 3.7]{Wang2024}
	For arbitrary Orlicz function $\varphi$ and decreasing weight $\omega$. Let $x\in S(\Lambda_{\varphi, \omega}^{o})$ and $K(x)\neq \emptyset$. Then $Grad(x)\subset \mathcal{M}_{\psi, \omega}$ 
	if and only if one of the following conditions is satisfied.\\
	(1) $\theta(kx)<1$, $k\in K(x)$.\\
	(2) $P_{\psi,\omega}(p_{-}(kx^{*})\omega)=\rho_{\psi, \omega}(p_{-}(kx))=1$.
\end{lemma}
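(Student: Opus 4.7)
The plan is to analyze, for each $f\in Grad(x)$, the decomposition $f=L_v+s$ provided by the singular decomposition lemma (which gives $\|f\|=\|v\|_{\mathcal{M}_{\psi,\omega}}+\|s\|$), and then confront it with the four-item characterisation of norm-attaining functionals at $x$ from Theorem 14 of \cite{WangDiExposedpoint}. The target conclusion $Grad(x)\subset \mathcal{M}_{\psi,\omega}$ is precisely the statement that $\|s\|=0$ for every supporting $f$.

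For sufficiency under (1), the second form of the singular-norm formula in the earlier lemma yields $s(u)\leq \|s\|\,\theta(u)$ for every $u\in L_{\varphi,\omega}\setminus E_{\varphi,\omega}$, hence $s(kx)\leq\|s\|\,\theta(kx)$; hypothesis $\theta(kx)<1$ combined with item~(2) of the characterisation, $s(kx)=\|s\|$, then forces $\|s\|=0$. For sufficiency under (2), I would exploit item~(4): the equality $\int k\,v\,x=\rho_{\varphi,\omega}(kx)+P_{\psi,\omega}(v)$ saturates the chain of Hardy--Littlewood and Young inequalities obtained by taking an optimal $g\prec\omega$ in the infimum definition of $P_{\psi,\omega}$. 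Saturation pins $v^{*}(t)/\omega^{v^{*}}(t)\in[p_{-}(kx^{*}(t)),p(kx^{*}(t))]$ almost everywhere, so $P_{\psi,\omega}(v)\geq P_{\psi,\omega}(p_{-}(kx^{*})\omega)=1$ by hypothesis; item~(3), $P_{\psi,\omega}(v)+\|s\|=1$, then forces $\|s\|=0$.

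For necessity I would argue contrapositively. Suppose both (1) and (2) fail. Since $k\in K(x)$ makes $\rho_{\varphi,\omega}(kx)<\infty$, one has $\theta(kx)\leq 1$, and failure of (1) upgrades this to $\theta(kx)=1$. The earlier existence lemma (Chen, Theorem 2.48) then produces a singular $s_{0}$ with $s_{0}(kx)=\|s_{0}\|=1$. The failure of (2), combined with the standing bound $\rho_{\psi,\omega}(p_{-}(kx))\leq 1$ coming from $k\in K(x)$, yields a positive slack $\delta:=1-\rho_{\psi,\omega}(p_{-}(kx))>0$ (or the analogous slack from the companion equality if only $P_{\psi,\omega}(p_{-}(kx^{*})\omega)<1$). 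I would then construct a regular $v_{0}$ with $v_{0}(t)=v_{0}^{*}(\sigma(t))\,\mathrm{sign}\,x(t)$, lying in the Young range $[p_{-}(k|x|),p(k|x|)]$ and calibrated so that $P_{\psi,\omega}(v_{0})=1-\delta$; the functional $f_{0}=L_{v_{0}}+\delta\, s_{0}$ will then verify items~(1)--(4) of the characterisation, placing it in $Grad(x)\setminus\mathcal{M}_{\psi,\omega}$.

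The principal obstacle is the construction step in the necessity direction. One must arrange $v_{0}$ so that, simultaneously, the rearrangement-sign condition holds with the very same $\sigma$ that arises from $x$, the Young equality of item~(4) is saturated, and the value $P_{\psi,\omega}(v_{0})$ matches the prescribed $1-\delta$. Because $P_{\psi,\omega}$ is defined through the inverse level function $\omega^{v_{0}^{*}}$ rather than $\omega$ itself, this last calibration will likely require the Halperin-type level-function identities collected in Section~3.
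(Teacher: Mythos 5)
First, note that the paper does not prove this statement: it is imported verbatim as \cite[Theorem~3.7]{Wang2024} in the auxiliary-results section, so there is no in-paper argument to measure your proposal against. Your overall architecture --- decompose $f=L_v+s$, use $\|f\|=\|v\|_{\mathcal{M}_{\psi,\omega}}+\|s\|$, and play the four-item norm-attainment characterisation against Chen's formula $s(u)\le\|s\|\,\theta(u)$ --- is exactly the toolkit this paper assembles for such questions, and your sufficiency argument under condition (1) is essentially complete (with the small extra remark that a purely singular $f$, to which the $v\neq 0$ characterisation does not apply, is excluded by $f(x)=s(x)\le\|s\|\theta(x)<\|s\|$ since $k\ge 1$).

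There are, however, two concrete gaps. In the sufficiency under (2), the Young saturation extracted from item (4) pins $v^{*}(t)/\omega^{v^{*}}(t)\in[p_{-}(kx^{*}(t)),p(kx^{*}(t))]$, i.e.\ a bound relative to the \emph{inverse level function} $\omega^{v^{*}}$, not relative to $\omega$. From this you get $P_{\psi,\omega}(v)\ge\int\psi(p_{-}(kx^{*}))\,\omega^{v^{*}}$, and since $\omega^{v^{*}}\prec\omega$ and $\psi(p_{-}(kx^{*}))$ is decreasing, Hardy's lemma gives $\int\psi(p_{-}(kx^{*}))\,\omega^{v^{*}}\le\int\psi(p_{-}(kx^{*}))\,\omega=1$ --- the \emph{wrong} direction. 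To conclude $P_{\psi,\omega}(v)\ge 1$ you need the pointwise bound $v^{*}\ge p_{-}(kx^{*})\,\omega$ (as in the companion characterisation, Theorem~3.8 of \cite{Wang2024}) together with monotonicity of $P_{\psi,\omega}$ with respect to the decreasing rearrangement; this is where the second equality $\rho_{\psi,\omega}(p_{-}(kx))=1$ in condition (2) actually enters, and your sketch does not use it. In the necessity, the unresolved ``calibration'' is not a technicality but the crux: the two equalities in (2) play distinct roles --- $\rho_{\psi,\omega}(p_{-}(kx))$ governs the value $f_{0}(x)=\tfrac1k\bigl(\rho_{\varphi,\omega}(kx)+\rho_{\psi,\omega}(p_{-}(kx))+\|s\|\bigr)$ through Young's equality, while $P_{\psi,\omega}(p_{-}(kx^{*})\omega)$ governs $\|L_{v_{0}}\|_{\mathcal{M}_{\psi,\omega}}$ and hence $\|f_{0}\|$ --- so the singular weight $\delta$ must equal $1-\rho_{\psi,\omega}(p_{-}(kx))$ to make $f_{0}(x)=1$ \emph{and} one must separately verify $\|v_{0}\|_{\mathcal{M}_{\psi,\omega}}+\delta=1$. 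Treating ``the slack from whichever equality fails'' as interchangeable, as your sketch does, will not produce a norming functional in the case where only one of the two quantities drops below $1$. Until these two points are resolved the argument is not a proof.
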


\begin{lemma}\cite[Theorem 3.8]{Wang2024}
	Assume $\varphi$ is an Orlicz function and $\omega$ is a decreasing weight. $L_{v}\in S(\mathcal{M}_{\psi, \omega})$ is a supporting functional of $x\in \Lambda_{\varphi, \omega}^{o}$($K(x)\neq \emptyset$) if and only if the following conditions are satisfied.\\
	(1) $P_{\psi, \omega}(v)=1$, $v(t)=v^{*}(\sigma(t))sign\:x(t)$. \\
	(2) $p_{-}(kx^{*}(t))\omega(t)\leq v^{*}(t)\leq p(kx^{*}(t))\omega(t)$ a.e in $R_{+}$ where $k\in K(x)$ and $\sigma$ is a measure preserving transformation such that $|x(t)|=x^{*}(\sigma(t))$, i.e.,
	\begin{equation*}
		p_{-}(k|x(t)|)\omega(\sigma(t))\leq v\:sign \:x(t)\leq p(k|x(t)|)\omega(\sigma(t)), a.e.\:on\:R_{+}.
	\end{equation*}
\end{lemma}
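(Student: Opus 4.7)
The plan is to reduce to the previously established characterization of supporting functionals (the cited [Theorem 14 in WangDiExposedpoint]) in the special case that the functional is regular, i.e., $f = L_v + s$ with $s = 0$. Under the assumption $L_v \in S(\mathcal{M}_{\psi,\omega})$ (so $\|f\| = 1$), conditions (1) and (3) of that earlier lemma become exactly condition (1) of the present lemma: $v(t) = v^*(\sigma(t))\operatorname{sign} x(t)$ and $P_{\psi,\omega}(v) = 1$, while condition (2) there (concerning the singular part) is vacuous. Hence the real work is to prove the equivalence between condition (4) of that lemma, which now reads
\begin{equation*}
k\int_0^\infty v(t) x(t)\,dt = \rho_{\varphi,\omega}(kx) + P_{\psi,\omega}(v),
\end{equation*}
and the pointwise inclusion in condition (2) of the present lemma.

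For the direction \emph{condition (2) implies $L_v$ supporting}, the assumed pointwise inclusion $v^*(t)/\omega(t) \in [p_-(kx^*(t)), p(kx^*(t))]$ is exactly the equality case in Young's inequality for $a=kx^*(t)$, $b=v^*(t)/\omega(t)$. Multiplying the resulting equality by $\omega(t)$ and integrating yields
\begin{equation*}
k\int v^*(t)x^*(t)\,dt = \rho_{\varphi,\omega}(kx) + \int \omega(t)\psi\!\left(\frac{v^*(t)}{\omega(t)}\right)dt.
\end{equation*}
The right-hand side is at least $\rho_{\varphi,\omega}(kx) + P_{\psi,\omega}(v) = \rho_{\varphi,\omega}(kx) + 1 = k$, where the last equality uses $k\in K(x)$ together with $\|x\|_{\varphi,\omega}^o = 1$, forcing $k = 1+\rho_{\varphi,\omega}(kx)$. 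On the other hand, condition (1) gives $\int v x\,dt = \int v^* x^*\,dt$, and H\"older's inequality (applied to $L_v$ on $\Lambda_{\varphi,\omega}^o$) gives $\int v x\,dt \leq \|L_v\|\,\|x\|_{\varphi,\omega}^o = 1$, so $k\int v^* x^* \leq k$. These two bounds force equality throughout; in particular $L_v(x)=1$, so $L_v$ supports $x$ and condition (4) of the earlier lemma is recovered.

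For the direction \emph{$L_v$ supporting implies condition (2)}, apply Young's inequality using the inverse level function $\omega^{v^*}$:
\begin{equation*}
kx^*(t)v^*(t) \leq \omega^{v^*}(t)\varphi(kx^*(t)) + \omega^{v^*}(t)\psi\!\left(\frac{v^*(t)}{\omega^{v^*}(t)}\right).
\end{equation*}
Integrating and using $\omega^{v^*}\prec \omega$ together with the fact that $\varphi(kx^*)$ is decreasing gives
\begin{equation*}
k\int v^*x^*\,dt \leq \int \omega^{v^*}\varphi(kx^*)\,dt + P_{\psi,\omega}(v) \leq \rho_{\varphi,\omega}(kx) + 1 = k.
\end{equation*}
Since $L_v$ supports $x$ we have $\int v^*x^* = 1$, so equality holds throughout. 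This yields (a) the pointwise inclusion $v^*(t)/\omega^{v^*}(t) \in [p_-(kx^*(t)),p(kx^*(t))]$ a.e.\ from Young's equality, together with (b) $\int(\omega - \omega^{v^*})\varphi(kx^*)\,dt = 0$.

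The main obstacle is upgrading (a), stated in terms of $\omega^{v^*}$, to condition (2), stated in terms of $\omega$. Off the level intervals of $v^*$ one has $\omega^{v^*}=\omega$, and (a) becomes (2) directly. On a maximal level interval $(a_n,b_n)$, the function $H(t):=\int_{a_n}^t(\omega-\omega^{v^*})\,ds$ satisfies $H\geq 0$ with $H(a_n)=H(b_n)=0$; this is a consequence of the defining condition $R(a_n,x)\leq R(a_n,b_n)$ of a (maximal) level interval. Integration by parts in (b), combined with the monotonicity of $\varphi\circ(kx^*)$, forces $kx^*$ to be locally constant on the interior of $\{H>0\}$. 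Together with (a), where $v^*/\omega^{v^*}$ equals the constant $R(a_n,b_n)$ on the level interval and this constant lies in $[p_-(kx^*(t)),p(kx^*(t))]$ for a.e.\ $t$, the desired inclusion $v^*(t)\in[p_-(kx^*(t))\omega(t),\,p(kx^*(t))\omega(t)]$ can be propagated to all of $(a_n,b_n)$. The pointwise equivalent formulation in terms of $\sigma$ and $|x(t)|$ then follows from condition (1). This level-interval analysis is the most delicate step of the proof.
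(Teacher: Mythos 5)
The paper does not actually prove this lemma; it imports it from \cite[Theorem 3.8]{Wang2024}, so there is no in-paper argument to compare against and your proposal must stand on its own. Judged that way, your sufficiency direction is sound: pointwise Young equality with respect to $\omega$, integration, the bound $\int_{R^{+}}\psi(v^{*}/\omega)\,\omega\,dt\geq P_{\psi,\omega}(v)=1$, the identity $k=1+\rho_{\varphi,\omega}(kx)$ for $k\in K(x)$, and H\"older in the K\"othe duality do force $L_{v}(x)=1$. The first part of the necessity direction is also correct: Young's inequality against $\omega^{v^{*}}$, the Hardy-type estimate $\int\omega^{v^{*}}\varphi(kx^{*})\leq\int\omega\,\varphi(kx^{*})$ (valid because $\omega^{v^{*}}\prec\omega$ and $\varphi(kx^{*})$ is decreasing), and the equality analysis legitimately yield (a) $v^{*}/\omega^{v^{*}}\in[p_{-}(kx^{*}),p(kx^{*})]$ a.e.\ and (b) $\int(\omega-\omega^{v^{*}})\varphi(kx^{*})=0$, and your integration-by-parts argument does show that $kx^{*}$ is constant on each component of $\{H>0\}$ inside a maximal level interval.

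The gap is the final ``propagation'' step, which is asserted rather than proved, and which the facts you have derived do not support. On a maximal level interval $(a_{n},b_{n})$ one has $v^{*}(t)/\omega(t)=R(a_{n},b_{n})\,\omega^{v^{*}}(t)/\omega(t)$, and $\omega^{v^{*}}/\omega$ is generally not $1$ there. Concretely, take $v^{*}\equiv c$ and $\omega$ strictly decreasing on $(a_{n},b_{n})$; then $(a_{n},b_{n})$ is a level interval, $\omega^{v^{*}}\equiv\bar{\omega}:=W(a_{n},b_{n})/(b_{n}-a_{n})$, and your conditions (a) and (b) reduce to $kx^{*}\equiv u_{0}$ on $(a_{n},b_{n})$ together with $c/\bar{\omega}\in[p_{-}(u_{0}),p(u_{0})]$ --- both of which can hold while $c/\omega(t)$ exits $[p_{-}(u_{0}),p(u_{0})]$ near the endpoints, since $c/\omega(t)$ sweeps the whole range $[c/\omega(a_{n}^{+}),\,c/\omega(b_{n}^{-})]$ of which $c/\bar{\omega}$ is only an interior value. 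Knowing that the single number $R(a_{n},b_{n})$ lies in the jump interval of $p$ at $u_{0}$ therefore does not give the pointwise inclusion with respect to $\omega$. To close this you would need a genuinely new input --- for instance an argument showing that on a level interval meeting $\{kx^{*}=u_{0}\}$ the entire range of $v^{*}/\omega$ must lie in $[p_{-}(u_{0}),p(u_{0})]$, or else a reformulation of condition (2) in terms of $\omega^{v^{*}}$ (equivalently, of the level function $(v^{*})^{0}$) --- and without it the necessity half of your proof is incomplete.
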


\section{Main Results}
\subsection{Strongly exposed point in Orlicz-Lorentz space}

\begin{theorem}\cite{WangDiExposedpoint}
Let $\varphi$ be an Orlicz function and $\omega$ be a decreasing weight, $\lim_{u\rightarrow \infty}\frac{\varphi(u)}{u}=B<\infty$. Assume $x_{0}\in S(\Lambda_{\varphi, \omega}^{o})$ and $\psi(B)\int_{0}^{\mu(supp\: x_{0})}\omega(t)dt<1$. Then $x_{0}$ is not an exposed point of $B(\Lambda_{\varphi, \omega}^{o})$. In addition $x_{0}$ is not a strongly exposed point of $B(\Lambda_{\varphi, \omega}^{o})$.
\end{theorem}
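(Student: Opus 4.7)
The plan is to invoke the case-(4) norm formula for $\|x_0\|_{\varphi,\omega}^o$, pin down the structure of every supporting functional at $x_0$, and then exploit the strict slack in the hypothesis to exhibit a second norming point of every such functional. First, since $\psi(B)\int_0^{\mu(\mathrm{supp}\,x_0)}\omega(t)\,dt<1\leq 1$, Lemma 3 (case (4)) gives $K(x_0)=\emptyset$ and
$\|x_0\|_{\varphi,\omega}^o=B\int_0^\infty x_0^*(t)\omega(t)\,dt=1$.
The assumption $\lim_{u\to\infty}\varphi(u)/u=B<\infty$ forces $\varphi\in\Delta_2$, so by Lemma 1(2) the dual coincides with the K\"othe dual $\mathcal{M}_{\psi,\omega}$; in particular every $f\in\mathrm{Grad}(x_0)$ has the form $L_v(y)=\int v y\,dt$ with $v\in\mathcal{M}_{\psi,\omega}$ and $\|v\|_{\mathcal{M}_{\psi,\omega}}=1$.

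Next I would pin down the structure of an arbitrary $v$ with $L_v\in\mathrm{Grad}(x_0)$. Applying Hardy--Littlewood to $1=L_v(x_0)=\int v x_0\leq \int v^*x_0^*\leq\|v\|_{\mathcal{M}_{\psi,\omega}}\|x_0\|^o=1$, equality throughout yields (i) a measure-preserving $\sigma$ with $v(t)=v^*(\sigma(t))\,\mathrm{sign}\,x_0(t)$ and $|x_0(t)|=x_0^*(\sigma(t))$, and (ii) $v^*(t)=B\omega(t)$ a.e. on $[0,\mu(\mathrm{supp}\,x_0))$, since $\int v^*x_0^*=B\int x_0^*\omega$ while $x_0^*>0$ there (this is also what one gets by letting $k\to\infty$ in the sandwich $p_-(kx_0^*)\omega\leq v^*\leq p(kx_0^*)\omega$ of Lemma 18, using $p(u)\uparrow B$). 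The strict inequality in the hypothesis is crucial: it implies that if $v$ were supported only on $\mathrm{supp}\,x_0$ then $P_{\psi,\omega}(v)\leq\psi(B)\int_0^{\mu(\mathrm{supp}\,x_0)}\omega<1$, forcing $\|v\|_{\mathcal{M}_{\psi,\omega}}<1$, a contradiction. Therefore $v$ is nontrivially supported on $(\mathrm{supp}\,x_0)^c$.

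Finally I would construct the second norming point. Pick $\delta>0$ with $\psi(B)\int_0^{\mu(\mathrm{supp}\,x_0)+\delta}\omega\leq 1$ (available by the strict inequality) and a small set $G\subset\mathrm{supp}\,x_0$ on which $x_0$ takes an essentially constant value $c$, with $\mu(G)\leq\delta$. By the previous paragraph $v$ has nontrivial range on $(\mathrm{supp}\,x_0)^c$, so by continuity of $F\mapsto\int_F v$ I can choose $F\subset(\mathrm{supp}\,x_0)^c$ with $\mu(F)=\mu(G)$ and $\int_F v=\int_G v$. Set $y=x_0\chi_{(\mathrm{supp}\,x_0)\setminus G}+c\chi_F$. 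Then $y\sim x_0$, so $y^*=x_0^*$ and $\mu(\mathrm{supp}\,y)\leq\mu(\mathrm{supp}\,x_0)+\delta$ still puts $y$ into case (4); hence $\|y\|^o=B\int y^*\omega=1$. By construction $L_v(y)=L_v(x_0)-c\int_G v+c\int_F v=1$. Since $y\neq x_0$ (they differ on $G\cup F$) but $L_v(y)=1=L_v(x_0)$ for the arbitrary $L_v\in\mathrm{Grad}(x_0)$, $x_0$ is not an exposed point, and a fortiori not a strongly exposed point. The main technical obstacle is handling the case when $x_0$ has no level set of positive measure; in that case one must first perform a measure-preserving swap within a thin slab to create an effective constant piece, and argue that the continuity-based choice of $F$ survives the limit. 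Another subtlety is verifying that $\|v\|_{\mathcal{M}_{\psi,\omega}}=1$ really forces $P_{\psi,\omega}(v)=1$ via the appropriate $\Delta_2$ condition on $\psi$ needed for the equivalence used in paragraph~2.
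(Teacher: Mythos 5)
The paper does not prove this theorem at all (it is imported from \cite{WangDiExposedpoint}), so your attempt can only be judged on its own merits, and it has a fatal gap at its pivot point. You argue that if a supporting functional $v$ were supported only on $\mathrm{supp}\,x_{0}$ then $P_{\psi,\omega}(v)\leq \psi(B)\int_{0}^{\mu(\mathrm{supp}\,x_{0})}\omega<1$ would force $\|v\|_{\mathcal{M}_{\psi,\omega}}<1$. That inference is false for Luxemburg-type norms when the modular can jump to $+\infty$, and here it does jump: since $\varphi(u)/u\uparrow B$, we have $\psi(s)=\infty$ for $s>B$, so for $v^{*}=B\omega\chi_{[0,m)}$ (with $m=\mu(\mathrm{supp}\,x_{0})$) every $g\prec\omega$ with $\int\psi(v^{*}/(\lambda|g|))|g|<\infty$, $\lambda<1$, would need $g^{*}\geq\omega/\lambda$ on $[0,m)$, contradicting $\int_{0}^{t}g^{*}\leq W(t)$; hence $P_{\psi,\omega}(v/\lambda)=\infty$ for all $\lambda<1$ and $\|v\|_{\mathcal{M}_{\psi,\omega}}=1$ despite $P_{\psi,\omega}(v)<1$. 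Concretely, $v=B\,\omega(\sigma(t))\,\mathrm{sign}\,x_{0}(t)$ is a norm-one functional in $\mathrm{Grad}(x_{0})$ (Young's inequality with the pair $(ku^{*},B)$ plus $\psi(B)W(m)<1$ gives $\int v^{*}u^{*}\leq\|u\|_{\varphi,\omega}^{o}$, and $\int vx_{0}=B\int x_{0}^{*}\omega=1$) that vanishes identically off $\mathrm{supp}\,x_{0}$. For this $v$ your set $F\subset(\mathrm{supp}\,x_{0})^{c}$ with $\int_{F}v=\int_{G}v$ cannot exist (the left side is $0$, the right side is not), so your second norming point $y$ is never produced, and the proof of ``not exposed'' collapses precisely for the most natural supporting functional.

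Two further problems: the claim that $\lim_{u\to\infty}\varphi(u)/u=B<\infty$ forces $\varphi\in\Delta_{2}$ is wrong (it only controls large arguments; $\Delta_{2}$ can fail near $0$, which matters since $W(\infty)=\infty$), so your reduction to $f=L_{v}$ is unjustified as stated — though it can be rescued, since $\varphi(u)\leq Bu$ gives $\rho_{\varphi,\omega}(\lambda x_{0})\leq\lambda B\int x_{0}^{*}\omega<\infty$ for all $\lambda$, hence $x_{0}\in E_{\varphi,\omega}$, $\theta(x_{0})=0$, and singular parts annihilate $x_{0}$ by Lemmas 9 and 12. Likewise, the sandwich of Lemma 18 that you invoke to get $v^{*}=B\omega$ on $[0,m)$ is stated only for $K(x)\neq\emptyset$, which is exactly the case excluded here (Lemma 3(4) gives $K(x_{0})=\emptyset$); the correct route is $v^{*}\prec B\omega$ (tested against characteristic functions) plus equality in Hardy's lemma, which yields $\int_{0}^{t}v^{*}=BW(t)$ only at points of decrease of $x_{0}^{*}$, not pointwise identity. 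Finally, you yourself flag but do not resolve the case where $x_{0}$ has no level set of positive measure. As it stands the argument does not establish the theorem.
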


\begin{theorem}\cite{WangDiExposedpoint}
Assume $\varphi$ is an Orlicz function and $\omega$ is a decreasing weight.
$x\in S(\lambda_{\varphi, \omega}^{o})$, $x(t)=x^{*}(\sigma(t))$ and $k\in K(x)$. If $x=\alpha\chi_{A}$ where $k\alpha\in S^{\prime}$ and $\mu(\sigma(A)\cap L(\omega))=0$. Then $x$ is not an exposed point of $B(\Lambda_{\varphi, \omega})$. In addition, $x$ is not a strongly exposed point of $B(\Lambda_{\varphi, \omega}^{o})$.
\end{theorem}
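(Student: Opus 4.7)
The plan is to prove non-exposedness by pinning down every supporting functional of $x$ via Lemma 3.8 of Wang2024, and then constructing a distinct unit-norm $y$ that each such functional norms to $1$; non-strongly-exposedness will follow a fortiori.

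First I would verify that every $f\in\mathrm{Grad}(x)$ is a regular functional of an explicit form. Because $x=\alpha\chi_A\in S(\Lambda_{\varphi,\omega}^{o})$, necessarily $\mu(A)<\infty$ (otherwise $W(\mu(A))=\infty$ forces $\|x\|=\infty$), hence $x\in E_{\varphi,\omega}$ and the earlier Chen-type lemmas yield that every singular functional vanishes at $x$; so each supporting functional has the form $L_v$. The assumption $k\alpha\in S'$ places $k\alpha$ in the open interior of some affine interval $(a_i,b_i)$, giving $p_{-}(k\alpha)=p(k\alpha)=:c$; Lemma 3.8(2) then collapses the sandwich
\[
p_{-}(k\alpha)\omega(\sigma(t))\leq v(t)\,\mathrm{sign}\,x(t)\leq p(k\alpha)\omega(\sigma(t))
\]
to the identity $v(t)=c\,\omega(\sigma(t))$ on $A$, with $v=0$ off $A$. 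The hypothesis $\mu(\sigma(A)\cap L(\omega))=0$ enters here to guarantee that this $v$ is nowhere locally constant along $\sigma$, which I will use to confirm nontriviality of the perturbation below.

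Next I would construct an explicit witness. Pick a continuous strictly decreasing $\epsilon:[0,\mu(A)]\to\mathbb{R}$ so small that $\alpha+\epsilon(s)>0$ and $k(\alpha+\epsilon(s))\in(a_i,b_i)$ for every $s$, normalized by $\int_{0}^{\mu(A)}\omega(s)\epsilon(s)\,ds=0$ (obtained by taking any strictly decreasing function and subtracting its $\omega$-weighted mean). Set $y(t)=(\alpha+\epsilon(\sigma(t)))\chi_{A}(t)$. Because $\sigma:A\to[0,\mu(A)]$ is measure-preserving and $\alpha+\epsilon$ is decreasing, $y^{*}=\alpha+\epsilon$ on $[0,\mu(A)]$, so $y\neq x$. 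Changing variables through $\sigma$,
\[
L_{v}(y)=c\alpha W(\mu(A))+c\int_{0}^{\mu(A)}\omega(s)\epsilon(s)\,ds=L_{v}(x)=1.
\]
Using the affine identity $\varphi(u)=cu-\psi(c)$ on $[a_i,b_i]$, the same substitution gives $\rho_{\varphi,\omega}(ky)=\rho_{\varphi,\omega}(kx)$; combining with $k\in K(x)$ and the Orlicz-norm formula (1),
\[
\|y\|^{o}\leq\tfrac{1}{k}\bigl(1+\rho_{\varphi,\omega}(ky)\bigr)=\tfrac{1}{k}\bigl(1+\rho_{\varphi,\omega}(kx)\bigr)=1,
\]
and the pairing $L_{v}(y)=1$ with $\|v\|_{\mathcal{M}_{\psi,\omega}}=1$ forces $\|y\|^{o}=1$.

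Since $L_{v}$ was arbitrary in $\mathrm{Grad}(x)$, $x$ is not exposed; taking the constant sequence $y_{n}\equiv y$ gives $L_{v}(y_{n})\to 1$ yet $\|y_{n}-x\|\not\to 0$, so $x$ is not strongly exposed either. The main obstacle I expect is Step~1: rigorously pinning down the form of every supporting functional (controlling the dependence on the measure-preserving map $\sigma$ and invoking $\mu(\sigma(A)\cap L(\omega))=0$ at the correct point) and confirming that no singular contribution survives; the perturbation computation in Step~2 then reduces to direct bookkeeping once the affine form of $\varphi$ on $(a_i,b_i)$ is exploited.
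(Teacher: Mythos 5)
The paper does not actually prove this theorem: it is imported from \cite{WangDiExposedpoint} and stated without argument, so there is no in-paper proof to compare against. Judged on its own terms, your argument is essentially sound. The reduction of $\mathrm{Grad}(x)$ to regular functionals is correct, but note that ``singular functionals vanish at $x$'' does not by itself kill the singular part; you need the additivity $\|L_{v}+s\|=\|v\|_{\mathcal{M}_{\psi,\omega}}+\|s\|$ (Lemma~11 of the paper) together with $s(x)=0$ to force $\|s\|=0$ via $1=L_{v}(x)\le\|v\|=1-\|s\|$. Likewise, since $x=\alpha\chi_{A}$ is constant on its support, the measure-preserving map $\sigma$ appearing in the characterization $v(t)=v^{*}(\sigma(t))\,\mathrm{sign}\,x(t)$ is not unique and may vary with $v$; your witness $y=(\alpha+\epsilon(\sigma(t)))\chi_{A}$ must therefore be built from the $\sigma$ attached to the particular supporting functional under consideration. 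The construction permits this, but it should be said explicitly, since non-exposedness requires defeating \emph{every} element of $\mathrm{Grad}(x)$, not one fixed functional. With those points made, the computation $L_{v}(y)=L_{v}(x)$, $\rho_{\varphi,\omega}(ky)=\rho_{\varphi,\omega}(kx)$ via the affine identity $\varphi(u)=p(k\alpha)u-\psi(p(k\alpha))$ on $(a_{i},b_{i})$, and the H\"older argument forcing $\|y\|^{o}=1$ are all correct, and non-strong-exposedness follows a fortiori.

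The one issue worth flagging is that your proof never genuinely uses the hypothesis $\mu(\sigma(A)\cap L(\omega))=0$; the role you assign to it (``$v$ is nowhere locally constant along $\sigma$'') plays no part in the final argument. This is not a logical defect --- a proof that ignores a hypothesis still proves the theorem --- but it should prompt a second look. The place where constancy of $\omega$ could in principle interfere is the evaluation $P_{\psi,\omega}(v)=\psi(p(k\alpha))W(\mu(A))$, since $P_{\psi,\omega}$ is computed through the level function of $v^{*}$ with respect to $\omega$; here $v^{*}/\omega=p(k\alpha)\chi_{[0,\mu(A))}$ is already non-increasing, so no level intervals arise and the formula holds regardless of $L(\omega)$. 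Either the hypothesis is redundant for this implication and is carried over from the companion result on strongly extreme points, or the original proof in \cite{WangDiExposedpoint} uses a rearrangement-type perturbation for which it matters; in any case you should state clearly that your argument does not require it.
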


\begin{theorem} Let $\varphi$ be an Orlicz function and $\omega$ be a decreasing weight.
$x\in S(\Lambda_{\varphi, \omega}^{o})$ and $K(x)\neq \emptyset$. Then $x$ is a strongly exposed point of $B(\Lambda_{\varphi, \omega}^{o})$ if and only if \\
(1) $\varphi\in\Delta_{2}$.\\
(2) $K(x)=\{ k\}$ and $\mu\{t\in R^{+}: kx(t)\notin S \}=0$; \\
(3) There exists a supporting functional $v(t)\in S(\mathcal{M}_{\psi, \omega})$ of $x$ and $\delta>0$ such that $P_{\psi, \omega}((1+\delta)v)<\infty$\\
(4)$\mu E=\{t\in R^{+}: kx^{*}(t)\in A^{\prime}\cup B^{\prime}\}=0$ \\
\end{theorem}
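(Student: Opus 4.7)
The plan is to prove the two implications separately, with sufficiency being the heavier step. Throughout, let $k$ denote the (unique, under the hypothesis) element of $K(x)$.

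\emph{Sufficiency.} Assuming (1)--(4), I would take the supporting functional $v\in S(\mathcal{M}_{\psi,\omega})$ provided by (3) and use $L_v$ as the candidate exposing functional. Condition (1) together with Lemma~1(2) gives $(\Lambda_{\varphi,\omega}^{o})^{*}\cong\mathcal{M}_{\psi,\omega}$, so $L_v\in Grad(x)$ genuinely lies in the norming dual. Let $x_n\in B(\Lambda_{\varphi,\omega}^{o})$ with $L_v(x_n)\to 1=L_v(x)$; the goal is $\|x_n-x\|_{\varphi,\omega}^{o}\to 0$. First I would establish a Young--Hardy--Littlewood estimate
\begin{equation*}
k L_v(x_n)\;\leq\;k\int_{R^{+}}x_n^{*}v^{*}\,dt\;\leq\;\rho_{\varphi,\omega}(kx_n)+P_{\psi,\omega}(v)\;=\;\rho_{\varphi,\omega}(kx_n)+1,
\end{equation*}
via the pointwise Young bound $k x_n^{*}v^{*}\leq[\varphi(kx_n^{*})+\psi(v^{*}/\omega^{v^{*}})]\omega^{v^{*}}$ together with $\int\varphi(kx_n^{*})\omega^{v^{*}}\leq\rho_{\varphi,\omega}(kx_n)$. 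Passing to the limit gives $\liminf_n\rho_{\varphi,\omega}(kx_n)\geq k-1=\rho_{\varphi,\omega}(kx)$, and combining this with $\|x_n\|^{o}\leq 1$ and $k\in K(x)$ pins the limit to $\rho_{\varphi,\omega}(kx_n)\to\rho_{\varphi,\omega}(kx)$.

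Next, the equality case of Young's, namely $v^{*}/\omega^{v^{*}}\in[p_{-}(kx_n^{*}),p(kx_n^{*})]$ in the limit, combined with conditions (2) and (4) (which force $p_{-}(kx^{*})=p(kx^{*})$ almost everywhere where it matters), pins down $x_n^{*}$ and yields $x_n^{*}\to x^{*}$ in measure. Transferring back via the measure-preserving map $\sigma$ associated to $v$ (so that $v(t)=v^{*}(\sigma(t))\operatorname{sign}x(t)$, as in the Grad-characterization lemma of \cite{Wang2024}) upgrades this to $x_n\to x$ in measure, with (3) supplying the uniform $\psi$-integrability of $v$ needed to justify the passage from $L_v(x_n)\to 1$ to equidistribution. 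Finally, condition (1) combined with Lemma~2 converts $\rho_{\varphi,\omega}(\lambda(x_n-x))\to 0$ into $\|x_n-x\|^{o}\to 0$, completing sufficiency.

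\emph{Necessity.} Assuming $x$ is strongly exposed by some $f\in Grad(x)$, each of (1)--(4) follows by contradiction. If $\varphi\notin\Delta_{2}$, Lemma~2 provides a sequence whose modular and norm behaviour decouple; adding a suitable small multiple of it to $x$ produces $x_n$ with $f(x_n)\to f(x)$ but $\|x_n-x\|^{o}$ bounded below. If $K(x)$ is not a singleton, the Orlicz-norm infimum is attained on a nontrivial interval of $k$-values, and the two endpoints supply distinct supporting candidates that can be interpolated to yield a non-convergent sequence. If (2) or (4) fail on a set $F$ of positive measure, the dual sandwich $p_{-}(kx^{*})\omega\leq v^{*}\leq p(kx^{*})\omega$ is genuinely an interval on $F$, and $x|_F$ may be rearranged inside that interval without changing $\int xv$ or $\|x\|^{o}$, producing a bad stationary sequence. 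Finally, if no supporting $v$ satisfies $P_{\psi,\omega}((1+\delta)v)<\infty$ for any $\delta>0$, then every candidate $L_v$ has values of $v/\omega^{v^{*}}$ concentrating on arbitrarily small sets, and supporting $x_n$ away from these peaks yields $L_v(x_n)\to L_v(x)$ without $\|x_n-x\|^{o}\to 0$.

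\emph{Main obstacle.} The most delicate step is the modular-to-rearrangement passage inside sufficiency: extracting convergence in measure of $x_n$ to $x$ from the single scalar $L_v(x_n)\to 1$, since we control only the bilinear pairing and not the rearrangement $x_n^{*}$ directly. The bookkeeping around the interplay between the level intervals of $\omega$ (which shape $\omega^{v^{*}}$) and the affine intervals of $\varphi$ (encoded by (2) and (4) via the sets $A,B,A',B'$) will be the heaviest part, and a careful application of the equality case of Hardy--Littlewood together with the Ryff transformation $\sigma$ will be needed to push it through.
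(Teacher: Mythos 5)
Your outline follows the same broad strategy as the paper (Young's inequality and its equality case, the Ryff measure-preserving transformation, and $\Delta_2$ to convert modular convergence into norm convergence), but the sufficiency argument has a genuine gap at its first quantitative step. You work throughout with the fixed scalar $k\in K(x)$ and claim that the Hardy--Littlewood/Young chain $kL_v(x_n)\le \rho_{\varphi,\omega}(kx_n)+1$ together with $\|x_n\|^{o}\le 1$ ``pins'' $\rho_{\varphi,\omega}(kx_n)\to\rho_{\varphi,\omega}(kx)=k-1$. It does not: that chain yields only $\liminf_n\rho_{\varphi,\omega}(kx_n)\ge k-1$, and $\|x_n\|^{o}\le 1$ gives no upper bound on $\rho_{\varphi,\omega}(kx_n)$ at the \emph{fixed} scalar $k$, because the infimum defining $\|x_n\|^{o}$ is attained at some $k_n\in K(x_n)$ which need not equal $k$. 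Without $\limsup_n\rho_{\varphi,\omega}(kx_n)\le k-1$ the Young-equality defect $\rho_{\varphi,\omega}(kx_n)+P_{\psi,\omega}(v)-k\int x_nv$ need not tend to $0$, and the entire equality-case analysis that is supposed to produce convergence in measure collapses. The paper avoids this by introducing $k_n\in K(x_n)$, first proving $\{k_n\}$ is bounded (via a weak$^{*}$ compactness and non-degeneracy argument), writing the defect as $k_n\|x_n\|^{o}-k_n\int x_nv\to 0$, and then proving $k_nx_n\to kx$ in measure in three steps (on the strictly convex set $G_0$, on the discontinuity sets $e_i$ of $p$, and on the affine-endpoint sets $E_j$, using the monotone envelopes $u_n=\inf_{m>n}k_mx_m$ and $y_n=\sup_{m>n}k_mx_m$ together with Fatou). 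Your sketch never introduces the $k_n$, so it also skips the case analysis (the paper's Cases 1, 3, 4 according to whether the supporting functional is built from $p_{-}(kx^{*})\omega$, $p(kx^{*})\omega$, or a mixture), which is precisely where conditions (2) and (4) are consumed.

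On necessity your ideas are in the right spirit but looser than the paper's: (1) and (2) follow in one line from the fact that a strongly exposed point is strongly extreme (no interpolation of endpoints of $K(x)$ is needed); for (3) the paper's concrete construction is $x_n=\tfrac12(x+u_n)$ with $u_n$ supported on $\{|v|>n\}$ and nearly norming $v$ there, which forces $f(x_n)\to 1$ while $\|x_n-x\|^{o}\ge\tfrac12\|x\chi_{G_n}\|^{o}\to\tfrac12$; for (4) the paper perturbs $x$ explicitly by $kx_n=kx-(\varepsilon_0+\tfrac1n)\chi_{\sigma^{-1}(G(b_i))}$ and verifies that the modular identities and the value of the supporting functional are preserved. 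You should make these constructions explicit, and above all repair the sufficiency step by working with $k_n\in K(x_n)$ rather than the fixed $k$.
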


\begin{proof}
\textsl{Necessity. }

Since an strongly exposed point is an strongly extreme point, (1) and (2) is obviously. Assume (3) is not necessary, then for any $v(t)\in S(\mathcal{M}_{\psi, \omega})$ and $\varepsilon>0$, if $\int_{R^{+}}v(t)x(t)dt=1$, then $P_{\psi, \omega}((1+\varepsilon)v(t))=\infty$.
$\|v\chi_{R^{+}\backslash G_{n}}\|_{\mathcal{M}_{\psi,\omega}}=1$
where $G_{n}=\{t\in R^{+}:|v(t)|\leq n\}$. From the definition of $\|v\chi_{R^{+}\backslash G_{n}}\|_{\mathcal{M}_{\psi, \omega}}$ there exist $u_{n}(t)=u_{n}\chi_{R^{+}\backslash G_{n}}(t)$, such that $\int_{R^{+}}v(t)u_{n}(t)dt=\int_{R^{+}\backslash G_{n}}v(t)u_{n}(t)dt\rightarrow 1$. Let
\begin{equation*}
x_{n}(t)=\frac{1}{2}(x(t)+u_{n}(t))
\end{equation*}
then from the fact that $\int_{R^{+}}x_{n}(t)v(t)dt\rightarrow 1$ as $n\rightarrow\infty$, we have
\begin{align*}
1&\geq \frac{1}{2}\left( \|x\|_{\varphi, \omega}^{o}+\|x_{n}\|_{\varphi, \omega}^{o}  \right)
\geq \|x_{n}\|_{\varphi, \omega}^{o}\\
&\geq \frac{1}{2}\left(\int_{G_{n}}x_{n}(t)v(t)dt+\int_{R\backslash G_{n}}u_{n}(t)v(t)dt\right)\rightarrow 1(n\rightarrow \infty).
\end{align*}
Therefore $\|x_{n}\|_{\varphi, \omega}^{o}\rightarrow 1$ and $\int_{R^{+}}x_{n}(t)v(t)dt\rightarrow 1$ as $n\rightarrow \infty$.
While $\|x-x_{n}\|_{\varphi, \omega}^{o}\geq \frac{1}{2}\|x\chi_{G_{n}}\|_{\varphi, \omega}^{o}\rightarrow \frac{1}{2}$, therefore $x$ is not a strongly exposed point.

Then we will prove the necessity of (4). If condition (4) is not satisfied, then we will discuss the following two cases.\\
(1) $\mu\{t\in R^{+}: kx^{*}(t)\in A^{\prime}\}\neq 0$.\\
(2) $\mu\{t\in R^{+}: kx^{*}(t)\in B^{\prime}\}\neq 0$.\\
We will discuss (2), (1) is similar.
Define $x_{n}\in \Lambda_{\varphi, \omega}^{o}$ such that
\begin{equation*}
kx_{n}(t)=kx(t)\chi_{R^{+}\backslash \sigma^{-1}(G(b_{i}))}+ (b_{i}-\varepsilon_{0}-\frac{1}{n})\chi_{\sigma^{-1}(G(b_{i}))}.
\end{equation*}
where $G(b_{i})=\{t\in R^{+}: kx^{*}(t)=b_{i}\}$. 
Obviously, $x_{n}\rightarrow x_{0}$ as $n\rightarrow\infty$, where 
$$
kx_{0}(t)=kx(t)\chi_{R^{+}\backslash\sigma^{-1}(G(b_{i}))}+
(b_{i}-\varepsilon_{0})\chi_{\sigma^{-1}(G(b_{i}))}.
$$
Consider $x_{n}^{\prime}$ defined as
\begin{align*}
kx_{n}^{\prime}
&=kx^{*}\chi_{R^{+}\backslash G(b_{i})}
+(kx^{*}-\varepsilon_{0}-\frac{1}{n})\chi_{G(b_{i})}\\
&=kx^{*}\chi_{R^{+}\backslash G(b_{i})}+(b_{i} -\varepsilon_{0}-\frac{1}{n})\chi_{G(b_{i})}.
\end{align*}
Since
\begin{align*}
\mu_{kx_{n}}(\lambda)
&=\mu\{t\in R^{+}: |kx_{n}|>\lambda\}\\
&=\mu\{t\in R^{+}\backslash \sigma^{-1}(G(b_{i})):|kx|>\lambda\}+\mu \{t\in \sigma^{-1}(G(b_{i})):  |b_{i}-\varepsilon_{0}-\frac{1}{n}|>\lambda\}\\
&=\mu\{t\in R^{+}\backslash G(b_{i}):|kx^{*}|>\lambda\} +\mu \{t\in G(b_{i}):  |b_{i}-\varepsilon_{0}-\frac{1}{n}|>\lambda\}\\
&=\mu_{kx_{n}^{\prime}}(\lambda).
\end{align*}
Then $x_{n}$ and $x_{n}^{\prime}$ have the same distribution function. Take into consideration that $x_{n}^{\prime}$ is decreasing, we can obtain $x_{n}^{\prime}=x_{n}^{*}$. Then we have
\begin{align*}
\rho_{\psi, \omega}(p(kx_{n}))
&=\int_{R^{+}}\psi(p(k(x_{n})^{*}))\omega(t)dt\\
&\geq \int_{R^{+}}\psi(p(kx_{n}^{\prime}))\omega(t)dt\\
&\geq \int_{R^{+}\backslash G(b_{i})}\psi(p(kx^{*}))\omega(t)dt+ \int_{G(b_{i})}\psi(p(b_{i}-\varepsilon_{0}-\frac {1}{n}))\omega(t)dt\\
&\geq \int_{R^{+}\backslash G(b_{i})}\psi(p(kx^{*}))\omega(t)dt+ \int_{G(b_{i})}\psi(p(b_{i}))\omega(t)dt\\
&=\rho_{\psi, \omega}(p(kx))\\
&= 1.
\end{align*}
Since $x_{n}\leq x$, we have $(x_{n})^{*}\leq x^{*}$ and
\begin{align*}
1\leq \rho_{\psi, \omega}(p(kx_{n}))\leq\rho_{\psi,\omega}(p(kx))=1.
\end{align*}
We have $\rho_{\psi, \omega}(p(kx_{n}))=1$ and $k\|x_{n}\|_{\varphi, \omega}^{o}\in K(\frac{x}{\|x_{n}\|_{\varphi, \omega}^{o}})$.
Let $\frac{f}{\|f\|}\in Grad(x)$, $f=L_{v}+s$ where $v=p(kx^{*}(\sigma(t)))\omega(\sigma(t))$, Then $f$ is norm attainable at $x$.
From Lemma 16, and the fact that $\varphi\in\Delta_{2}$, we have $s=0$ and
\begin{align*}
\|f\|=f(x)
&=\frac{1}{k}\int_{R^{+}}kx(t)p(kx)\omega(\sigma(t))dt \\
&=\frac{1}{k}\int_{R^{+}}kx^{*}(t)p(kx^{*}(t))\omega(t)dt\\
&=\frac{1}{k}\left(P_{\psi, \omega}(p(kx^{*})\omega)+\rho_{\varphi, \omega}(kx)\right)\\
&=\frac{1}{k}\left(\rho_{\psi, \omega}(p(kx))+\rho_{\varphi, \omega}(kx)\right)\\
&=\frac{1}{k}\left(1+\rho_{\varphi, \omega}(kx)\right)\\
&=1.
\end{align*}
Then we will consider $f(\frac{x_{n}}{\|x_{n}\|_{\varphi,
\omega}^{o}})$.
First we discuss
\begin{align*}
\int_{R^{+}}x_{n}p(kx^{*}(\sigma(t)))\omega(\sigma(t))dt.
\end{align*}
Define $F(t)=kx_{n}(t)p(kx^{*}(\sigma(t)))\omega(\sigma(t))$.
Since
\begin{align*}
kx_{n}(\sigma^{-1}(t))
&=\left(kx-\left(\varepsilon_{0}+\frac{1}{n}\right) \chi_{\sigma^{-1}(G^{-}_{\varepsilon}(b_{i}))}\right)(\sigma^{-1}(t))\\
&=kx(\sigma^{-1}(t))-\left(\varepsilon_{0}+\frac{1}{n}\right)\chi_{\sigma^{-1}(G^{-}_{\varepsilon}(b_{i}))}(\sigma^{-1}(t))\\
&=kx(\sigma^{-1}(t))-\left(\varepsilon_{0}+\frac{1}{n}\right)\chi_{G^{-}_{\varepsilon}(b_{i})}\\
&=kx^{*}(t)-\left(\varepsilon_{0}+\frac{1}{n}\right)\chi_{G^{-}_{\varepsilon}(b_{i})}.
\end{align*}
Therefore we have
\begin{align*}
F(\sigma^{-1}(t))=kx_{n}(\sigma^{-1}(t))(t)p(kx^{*}(t))\omega(t)
=(kx^{*}(t)-(\varepsilon_{0}+\frac{1}{n})\chi_{G(b_{i})})p(kx^{*}(t))\omega(t).
\end{align*}
Since $\sigma$ is a measure preserving transformation, it follows that
\begin{align*}
\int_{R^{+}}F(t)dt =\int_{R^{+}}F(\sigma^{-1}(t))dt.
\end{align*}
Since $\varphi(x_{n})\chi_{\sigma^{-1}(G(b_{i}))}$ and $\varphi(x_{n}^{\prime}(\sigma(t)))\chi_{\sigma^{-1}(G(b_{i}))}$ have the same distribution function,
we have
\begin{align*}
f(\frac{x_{n}}{\|x_{n}\|_{\varphi, \omega}^{o}})
&=\frac{1}{\|x_{n}\|_{\varphi, \omega}^{o}}\left(\int_{R^{+}}x_{n}p(kx^{*}(\sigma(t)))\omega(\sigma(t))dt\right)\\
&=\frac{1}{k\|x_{n}\|_{\varphi, \omega}^{o}} \left( \int_{R^{+}}kx_{n}p(kx^{*}(\sigma(t)))\omega(\sigma(t))dt\right)\\
&=\frac{1}{k\|x_{n}\|_{\varphi, \omega}^{o}}\left( \int_{R^{+}}\left(kx^{*}-\left(\varepsilon_{0}
+\frac{1}{n}\right)\chi_{G(b_{i})}\right)p(kx^{*})\omega(t)dt\right)\\
&=\frac{1}{k\|x_{n}\|_{\varphi, \omega}^{o}}\left( \int_{R^{+}}\varphi\left(kx^{*}-\left(\varepsilon_{0}
+\frac{1}{n}\right)\chi_{G(b_{i})}\right)\omega(t)dt
+\int_{R^{+}}\psi(p(kx^{*}))\omega(t)dt\right)\\
&=\frac{1}{k\|x_{n}\|_{\varphi, \omega}^{o}}\left(\int_{R^{+}}\varphi\left(kx^{*}-\left(\varepsilon_{0}+
\frac{1}{n}\right)\chi_{G(b_{i})}\right)\omega(t)dt+1\right)\\
&=\frac{1}{k\|x_{n}\|_{\varphi, \omega}^{o}}\left(\int_{R^{+}\backslash G(b_{i})}\varphi((kx_{n})^{*})\omega(t)dt+
\int_{G(b_{i})}\varphi(x_{n}^{\prime})\omega(t)dt+1\right)\\
&=\frac{1}{k\|x_{n}\|_{\varphi, \omega}^{o}}\left(\int_{R^{+}\backslash G(b_{i})}\varphi(k(x_{n})^{*})\omega(t)dt+
\int_{G(b_{i})} \varphi((x_{n})^{*})\omega(t)dt+1 \right)\\
&=\frac{1}{k\|x_{n}\|_{\varphi, \omega}^{o}}\left(\int_{R^{+}}\varphi(k(x_{n})^{*})\omega(t)dt+1\right)\\
&=\frac{1}{k\|x_{n}\|_{\varphi, \omega}^{o}}\left(\rho_{\varphi, \omega}(kx_{n})+1\right)\\
&=1.
\end{align*}
Therefore $f(\frac{x_{n}}{\|x_{n}\|_{\varphi, \omega}^{o}})\rightarrow 1=f(x)$ and
\begin{equation*}
\liminf_{n\rightarrow \infty}\|\frac{x_{n}}{\|x_{n}\|^{o}}-x\|_{\varphi, \omega}^{o}\geq \|\frac{x_{0}}{\|x_{0}\|^{o}}-x\|_{\varphi, \omega}^{o}>0.
\end{equation*}
Therefore $x$ is not a strongly exposed point of $B(\Lambda_{\varphi, \omega}^{o})$, a contradiction.

Then we will prove the necessity of condition (5).
We will only need to consider the following two case. \\
\textsl{Case 1}: $P_{\psi, \omega}(p_{-}(kx^{*})\omega)=1$ and there exists $b_{j}\in B$ and $\varepsilon>0$ such that $p_{-}(b_{j}-\varepsilon)=p(b_{j}-\varepsilon)= p_{-}(b_{j})$, $\mu G(b_{j})= \mu\{t\in R^{+}: kx^{*}(t)=b_{j}\}>0$ and $G_{\varepsilon}^{-}(b_{j})\subset L(\omega)$.
Following the method in the proof process of the necessity of condition (4),
we can define
$$
kx_{n}= kx-(\varepsilon_{0}+\frac{1}{n})\chi_{\sigma^{-1}(G(b_{j}))}.
$$
Obviously, $x_{n}\rightarrow x_{0}$ a.e. on $R^{+}$ where $kx_{0}(t)=kx-\varepsilon_{0}\chi_{\sigma^{-1}(G(b_{j}))}$ as $n\rightarrow\infty$.
We can prove that
$\|p_{-}(kx^{*})\omega(\sigma(t))\|_{\mathcal{M}_{\psi, \omega}}=1$ and $p_{-}(kx^{*}(\sigma(t)))\omega(\sigma(t))$ is the supporting functional of $x$. It is easy to verify that
$$
\int_{R^{+}}\frac{x_{n}(t)}{\|x_{n}\|_{\varphi, \omega}^{o}}\cdot p_{-}(kx^{*}(\sigma(t)))\omega(\sigma(t))dt\rightarrow 1=\int_{R^{+}}x(t)\cdot p_{-}(kx^{*}(\sigma(t)))\omega(\sigma(t))dt
$$
while $\lim_{n\rightarrow\infty}\|\frac{x_{n}}{\|x_{n}\|^{o}}-x\|_{\varphi, \omega}^{o}\geq \|\frac{x_{0}}{\|x_{0}\|^{o}}-x\|_{\varphi,\omega}^{o}$, therefore $x$ is not a strongly exposed point of $B(\Lambda_{\varphi, \omega}^{o})$.  \\
\textsl{Case 2}. If $\theta(kx)<1$, $P_{\psi, \omega}(p(kx^{*})\omega)=1$. In this case $p(kx^{*}(\sigma(t)))\omega(\sigma(t))$ is the supporting functional of $x$. If there exists $a_{i}\in A$ such that $p(a_{i}+\varepsilon)= p(a_{i})$ and $\mu G(a_{i})=\mu \{t\in R^{+}: kx(t)=a_{i}\}>0$.
We define
$$
kx_{n}=kx +\left(\varepsilon_{0}+\frac{1}{n}\right)\chi_{\sigma^{-1}(G(a_{i}))}.
$$
$x_{n}\rightarrow x_{0}$ where $kx_{0}=kx+\varepsilon_{0}\chi_{\sigma^{-1}(G(a_{i}))}$
Since
$\rho_{\psi, \omega}(p(kx_{n}))=\rho_{\psi, \omega}(p(kx))=1$,
$\int_{R^{+}}\frac{x(t)}{\|x\|_{\varphi, \omega}^{o}}\cdot p(kx^{*}(\sigma(t)))\omega(\sigma(t))dt\rightarrow 1=\int_{R^{+}}x(t)p(kx^{*}(\sigma(t)))\omega(\sigma(t))dt$
while
$\lim_{n\rightarrow\infty}\|\frac{x_{n}}{\|x_{n}\|^{o}}-x\|\geq \|\frac{x_{0}}{\|x_{0}\|^{o}}-x\|_{\varphi, \omega}^{o}>0$,
$x$ is not a strongly exposed point of $B(\Lambda_{\varphi, \omega}^{o})$, a contradiction.
Hence we finish the proof of necessity.

\textsl{Sufficiency.}
Assume $x\in S(\Lambda_{\varphi, \omega}^{o})$, $x_{n}\in S(\Lambda_{\varphi, \omega}^{o})$, $v\in \mathcal{M}_{\psi, \omega}$ and $\int_{R^{+}}x_{n}v(t)dt\rightarrow 1$. Let $k_{n}\in K(x_{n})$, first we will prove that $\{k_{n}\}_{n\in N}$ is bounded.
Since $B(\Lambda_{\varphi, \omega}^{o})$ is $weak^{*}$ compact, then there exist $x_{n_{k}}\in S(\Lambda^{o}_{\varphi, \omega})$ and  such that $x_{n_{k}}\stackrel{weak^{*}}{\longrightarrow} x^{1}$ as $n\rightarrow \infty$. Since $\int_{R^{+}}v(t)x_{n}(t)dt\rightarrow 1= \int_{R^{+}}v(t)x(t)dt$, then $x^{1}=x$. Given $\|x\|_{\varphi, \omega}^{o}=1$, there exists $[m_{1}, m_{2}]$ such that
\begin{equation*}
\int_{m_{1}}^{m_{2}}x(t)dt=a>0.
\end{equation*}
Since $\chi_{[m_{1}, m_{2}]}$ belong to the predual of $\Lambda_{\varphi, \omega}^{o}$, we can obtain
\begin{equation*}
\lim_{n\rightarrow \infty}\int_{m_{1}}^{m_{2}}x_{n}(t)dt=\int_{m_{1}}^{m_{2}}x_{0}(t)dt=a.
\end{equation*}
Thus there exists $\delta_{1}$ and $\delta_{2}$ such that $\mu\{t\in R^{+}: |x_{n}(t)|\geq \delta_{1}\}\geq \delta_{2}$. If $\lim_{n\rightarrow \infty}k_{n}=\infty$, we can get
\begin{equation*}
1\geq \frac{1}{k_{n}}\left(1+\rho_{\varphi, \omega}(k_{n}x_{n})\right)\geq \frac{1}{k_{n}}\delta_{2}\varphi(k_{n}\delta_{1})\geq \frac{\varphi(k_{n}\delta_{1})}{k_{n}\delta_{1}}\delta_{1}\delta_{2}\rightarrow \infty, \; n\rightarrow\infty,
\end{equation*}
and it lead to a contradiction.
Therefore $\{k_{n}\}_{n\in N}$ is bounded. For arbitrary subset $A\subset R^{+}$,
since
\begin{equation}
\begin{aligned}
0
&\leftarrow \rho_{\varphi, \omega}(k_{n}x_{n})+1-\int_{R^{+}}k_{n}x_{n}(t)v(t)dt \\
&=\int_{R^{+}}\varphi(k_{n}x^{*}_{n})\omega(t)dt+\int_{R^{+}}\psi(\frac{v^{*}(t)}{\omega(t)})\omega(t)dt-\int_{R^{+}}k_{n}x_{n}(t)v(t)dt\\
&\geq\int_{R^{+}}\left(\varphi(k_{n}x_{n}(\sigma^{-1}(t)))+\psi(\frac{v^{*}(t)}{\omega(t)})\right)\omega(t)dt
-\int_{R^{+}}k_{n}x_{n}\frac{v(t)}{\omega(\sigma(t))}\omega(\sigma(t))dt\\
&= \int_{R^{+}}\left(\varphi(k_{n}x_{n}(\sigma^{-1}(t)))+\psi(\frac{v(\sigma^{-1}(t))}{\omega(t)})\right)\omega(t)dt
-\int_{R^{+}}k_{n}x_{n}(t)\frac{v(t)}{\omega(\sigma(t))}\omega(\sigma(t))dt\\
&=\int_{R^{+}}\left(\varphi(k_{n}x_{n}(t))+\psi(\frac{v(t)}{\omega(\sigma(t))})\right)\omega(\sigma(t))dt
-\int_{R^{+}}k_{n}x_{n}(t)\frac{v(t)}{\omega(\sigma(t))}\omega(\sigma(t))dt\\
&\geq\int_{A}\left(\varphi(k_{n}x_{n}(t))+\psi(\frac{v(t)}{\omega(\sigma(t))})-
k_{n}x_{n}\frac{v(t)}{\omega(\sigma(t))}\right)\omega(\sigma(t))dt.\\
&\geq 0
\end{aligned}
\end{equation}
Then
\begin{equation}
\int_{A}\left(\varphi(k_{n}x_{n}(t))+\psi(\frac{v(t)}{\omega(\sigma(t))})-k_{n}x_{n}\frac{v(t)}{\omega(\sigma(t))}\right)\omega(\sigma(t))dt\rightarrow 0\; \text{as~}n\rightarrow\infty.
\end{equation}
Besides, from formula (6) we also have
\begin{equation}
\int_{R^{+}}\varphi(k_{n}x_{n}(\sigma^{-1}(t)))\omega(t)dt-\rho_{\varphi, \omega}(k_{n}x_{n}^{*})\rightarrow 0\;\text{as~}n\rightarrow\infty.
\end{equation}
Next we will prove
\begin{align}
\lim_{\mu e\rightarrow 0}&\sup_{n}\rho_{\varphi, \omega}(k_{n}x_{n}\chi_{e})=0,\\
\lim_{\mu e\rightarrow 0}&\sup_{n}\rho_{\psi, \omega}(p(k_{n}x_{n}\chi_{e}))=0
\end{align}
where $k_{n}\in K(x_{n})$. If not, there exist $\varepsilon>0$ and $\{e_{n_{i}}\}_{i\in N}$
such that
\begin{align*}
\lim_{i\rightarrow \infty}\mu e_{n_{i}}&=0,\\
\rho_{\varphi, \omega}(k_{n_{i}}x_{n_{i}}\chi_{e_{n_{i}}})&\geq \varepsilon, \\
P_{\psi, \omega}((1+\delta)v\chi_{e_{n_{i}}})&\leq \frac{\delta\varepsilon}{2}.
\end{align*}
Thus from formula (4)(5) we have
\begin{align*}
0
&\leftarrow  1+\rho_{\varphi, \omega}(k_{n_{i}}x_{n_{i}})-k_{n_{i}}\int_{R^{+}}x_{n_{i}}v(t)dt\\
&=\int_{R^{+}}\varphi(k_{n_{i}}x^{*}_{n_{i}}(t))\omega(t)dt
+\int_{R^{+}}\psi(\frac{v^{*}(t)}{\omega})\omega(t)dt
-\int_{R^{+}}k_{n_{i}}x^{*}_{n_{i}}\frac{v^{*}(t)}{\omega(t)}\omega(t)dt\\
&= \int_{e_{n_{i}}}\left(\varphi(k_{n_{i}}x^{*}_{n_{i}}(t))
+\psi(\frac{v^{*}(t)}{\omega(t)})-k_{n_{i}}x_{n_{i}}v^{*}(t)\right)\omega(t)dt\\
&\geq \int_{e_{n_{i}}}
\left(\varphi(k_{n_{i}}x^{*}_{n_{i}}(t))+\psi(\frac{v^{*}(t)}{\omega})
-\frac{1}{1+\delta}k_{n_{i}}x^{*}_{n_{i}}(1+\delta)
\left(\frac{v^{*}(t)}{\omega(t)}\right)\right)\omega(t)dt\\
&\geq \int_{e_{n_{i}}}
\left(\varphi(k_{n_{i}}x^{*}_{n_{i}}(t))+\psi(\frac{v^{*}(t)}{\omega})
-\frac{1}{1+\delta}\left(\varphi(k_{n_{i}}x^{*}_{n_{i}})
+\psi(\frac{(1+\delta)v^{*}}{\omega})\right)\right)\omega(t)dt\\
&\geq \frac{\delta\varepsilon}{1+\delta}-\frac{P_{\psi, \omega}((1+\delta)v(t)\chi_{e_{i}})}{1+\delta}\\
&\geq \frac{\delta\varepsilon}{2(1+\delta)}.
\end{align*}
It lead to a contradiction. Therefore the formula $(9)$ is true. Since
$$\lim_{\mu e\rightarrow 0}\sup_{n}\int_{e}k_{n}x_{n}(t)p(k_{n}x_{n}(t))dt=0,$$
we obtain the formula (10) is true.

In the following, we will prove the sufficiency in the following cases.

\textsl{Case 1.}
If $P_{\psi, \omega}(p_{-}(k(x^{*}))\omega)=1$, then $\mu\{t\in R^{+}: kx^{*}(t)\in B\}=0$ 
In this case, $v=p_{-}(kx^{*}(\sigma(t)))\omega(\sigma(t))$ is the supporting functional of $x\in S(\Lambda_{\varphi, \omega}^{o})$. For any $\{x_{n}\}_{n\in N}\in S(\Lambda_{\varphi, \omega})$ with $\int_{R^{+}}x_{n}(t)v(t)dt\rightarrow 1(n\rightarrow\infty)$,
first we will prove that $k_{n}x_{n}-kx \stackrel{\mu}{\longrightarrow}0$ as $(n\rightarrow \infty)$.

Let $r_{n}$ be the discontinuous point of $p(t)$ and $a_{i}$ be arbitrary element in $A$. Define the following set
\begin{align*}
e_{i}&=\{t\in R^{+}: kx^{*}(t)=r_{i}\}.\\
E_{j}&=\{t\in R^{+}: kx^{*}(t)=a_{j}\}.\\
G_{0}&=R^{+}\backslash\cup_{i}(\sigma^{-1}e_{i})\backslash (\cup_{j}\sigma^{-1}E_{j}).
\end{align*}
Now we will prove $k_{n}x_{n}\stackrel{\mu}\rightarrow kx (n\rightarrow \infty)$ in three steps.

(1)$k_{n}x_{n}\stackrel{\mu}\rightarrow kx(n\rightarrow\infty)$ on $G_{0}$.

If not, there exist $\varepsilon>0$ and $\delta>0$ such that $\mu\{t\in R^{+}: |k_{n}x_{n}-kx|\geq \varepsilon\}\geq\delta$.
Since
\begin{align*}
k_{n}-1
\geq \rho_{\varphi, \omega}(k_{n}x_{n})
&\geq \int_{\{t\in R^{+}: |k_{n}x_{n}^{*}(t)|\geq D\}}\varphi(k_{n}x_{n}^{*}(t))\omega(t)dt\\
&\geq \varphi(D)W(\mu\{t\in R^{+}: |k_{n}x^{*}_{n}(t)|\geq D\}),\\
k-1
\geq \rho_{\varphi, \omega}(k_{n}x_{n})
&\geq \int_{\{t\in R^{+}: |kx^{*}(t)|\geq D\}}\varphi(k_{n}x_{n}^{*}(t))\omega(t)dt\\
&\geq \varphi(D)W(\mu\{t\in R^{+}: |kx^{*}(t)|\geq D\}).
\end{align*}
Following from the fact that $\{k_{n}\}_{n\in N}$ is bounded, there exists $D>0$ such that
\begin{align*}
\mu \{t\in R^{+}&: kx^{*}(t)>D\}\leq \frac{\delta}{4}.\\
\mu \{t\in R^{+}&: k_{n}x_{n}^{*}(t)>D\}\leq \frac{\delta}{4}.
\end{align*}
Since for $t\in G_{0}$, $kx^{*}(t)\neq r_{n}$, $kx^{*}(t)\neq a_{i}$, $a_{i}\in A$. Then there exist open segments $C_{n}$, $C_{i}^{\prime}$ such that $r_{n}\in C_{n}$, $a_{i}\in C_{i}^{\prime}$ and
$\mu \{ t\in R^{+}: x^{*}(t)\in (\cup_{n}C_{n})\cup(\cup_{i}C_{i}^{\prime})\}
<\frac{\delta}{4}$.
Let
$$
G_{n}=\{t\in G_{0}: |k_{n}x_{n}-kx|\geq \varepsilon,\:|k_{n}x_{n}(t)|\leq D, |kx(t)|\leq D , \:kx^{*}(t)\notin (\cup_{n}C_{n})\cup(\cup_{i}C_{i}^{\prime})\;\}
$$
Obviously, $\mu G_{n}\geq\frac{\delta}{4}$, $p_{-}(kx)\notin[p_{-}(k_{n}x_{n}), p(k_{n}x_{n})]$, and there exists an $\delta_{1}>0$ such that
\begin{align*}
\min\{|p_{-}(kx)-p_{-}(k_{n}x_{n})|, |p_{-}(kx)-p(k_{n}x_{n})|\}>\delta_{1}
\end{align*}
wherever $t\in G_{n}$. Therefore there exist $\delta_{2}>0$ such that
\begin{equation*}
\varphi(k_{n}x_{n})+\psi(kx)-k_{n}x_{n}kx\geq \delta_{2}
\end{equation*}
From formula (7) and formula (8) we can select $x_{n_{0}}$ and $M_{n_{0}}$ such that
\begin{align*}
\int_{G_{n_{0}}}\left(\varphi(k_{n_{0}}x_{n_{0}}(t))+\psi(\frac{v(t)}{\omega(\sigma(t))})-
k_{n_{0}}x_{n_{0}}\frac{v(t)}{\omega(\sigma(t))}\right)\omega(\sigma(t))dt
\leq \frac{\delta_{2}(W(M_{n_{0}}+\frac{\delta}{4})-W(M_{n_{0}}))}{2}.
\end{align*}
and
\begin{align*}
k_{n_{0}}x_{n_{0}}(\sigma^{-1}(t))&<\frac{\varepsilon}{2},\; t\geq M_{n_{0}},\\
kx^{*}(t)                         &<\frac{\varepsilon}{2},\; t\geq M_{n_{0}}.
\end{align*}
It is obviously $\sigma(G_{n_{0}})\subset (0, M_{n_{0}})$. If not, there exists $t_{0}\in G_{n_{0}}$ such that $\sigma(t_{0})>M_{n_{0}}$.
Then
\begin{align*}
k_{n_{0}}x_{n_{0}}(\sigma^{-1}(\sigma(t_{0})))
&=k_{n_{0}}x_{n_{0}}(t_{0})< \frac{\varepsilon}{2} \text{~and~} \\
kx^{*}(\sigma(t_{0}))
&=kx(t_{0})< \frac{\varepsilon}{2}.
\end{align*}
and it contribute to
\begin{align*}
|k_{n_{0}}x_{n_{0}}(t_{0})-kx(t_{0})|< \frac{\varepsilon}{2}+\frac{\varepsilon}{2}=\varepsilon, \; t_{0}\in G_{n_{0}},
\end{align*}
a contradiction, then $\sigma(G_{n_{0}})\subset (0, M_{n_{0}})$.
Thus
\begin{align*}
\frac{\delta_{2}(W(M_{n_{0}}+\frac{\delta}{4})-W(M_{n_{0}}))}{2}
&\geq
\int_{G_{n_{0}}}\left(\varphi(k_{n}x_{n}(t))+\psi(\frac{v(t)}{\omega(\sigma(t))})-
k_{n}x_{n}\frac{v(t)}{\omega(\sigma(t))}\right)\omega(\sigma(t))dt\\
&\geq \delta_{2}\int_{G_{n_{0}}}\omega(\sigma(t))dt\\
&\geq \delta_{2}(W(M_{n_{0}}+\frac{\delta}{4})-W(M_{n_{0}})),
\end{align*}
a contradiction.

(2) $k_{n}x_{n}\stackrel{\mu}{\longrightarrow}kx$ on $\sigma^{-1}(\cup_{i}e_{i})$.

If there exist $e_{i_{0}}\in \cup_{i}e_{i}$, $\varepsilon>0$ and $\delta>0$ such that
\begin{equation*}
e_{n, i_{0}}=\{t\in e_{i_{0}}:k_{n}x_{n}(t)\geq r_{i_{0}}+\varepsilon\} \text{~and~}
\liminf_{n\rightarrow \infty}
\mu e_{i_{0},n}\geq \delta.
\end{equation*}
Therefore there exists an subsequence $\{k_{n_{j}}x_{n_{j}}\}_{j\in N}\subset \{k_{n}x_{n}\}_{n\in N}$  such that
$$
p(k_{n_{j}}x_{n_{j}}(t))\geq p_{-}(k_{n_{j}}x_{n_{j}}(t))\geq p(r_{i_{0}})=p_{-}(r_{i_{0}})+\varepsilon_{0}
$$
whenever $t\in e_{n, i_{0}}$.
Then
\begin{equation*}
p_{-}(r_{i_{0}})\notin [p_{-}(k_{n_{j}}x_{n_{j}}(t)), p(k_{n_{j}}x_{n_{j}}(t))]
\end{equation*}
and
\begin{equation*}
\min\{|p_{-}(r_{i_{0}})-p_{-}(k_{n_{j}}x_{n_{j}}(t))|,\: |p_{-}(r_{i_{0}})-p(k_{n_{j}}x_{n_{j}}(t))|\}>\varepsilon_{0}.
\end{equation*}
Therefore there exists an $\varepsilon^{\prime}>0$ such that
\begin{equation*}
\varphi(k_{n_{j}}x_{n_{j}}(t))+\psi(p_{-}(r_{i_{0}}))
-k_{n_{j}}x_{n_{j}}(t)(p_{-}(r_{i_{0}}))>\varepsilon^{\prime}.
\end{equation*}
whenever $t\in e_{n, i_{0}}$.
Therefore there exist an $n_{0}$, $M_{n_{0}}$ such that

\begin{align*}
\int_{e_{n_{0}, i_{0}}}\left( \varphi(k_{n_{0}}x_{n_{0}}(t))+\psi(p_{-}(kx^{*}(t)))-k_{n_{0}}x_{n_{0}}p_{-}(kx^{*}(t))
 \right)\omega(\sigma(t))dt <\frac{\varepsilon^{\prime}}{2}\left(W(M_{n_{0}}+\delta)-W(M_{n_{0}})\right)
\end{align*}
and
\begin{align*}
k_{n_{0}}x_{n_{0}}(\sigma^{-1}(t))
< \frac{\varepsilon}{2},\;t\geq M_{n_{0}},
kx^{*}(t)
\leq \frac{\varepsilon}{2}, \;t\geq M_{n_{0}}.
\end{align*}
It is obviously
$\sigma(t)\leq M_{n_{0}}$ whenever $t\in e_{n_{0}, i_{0}}$. If not, assume there exist $t_{0}\in e_{n_{0}, i_{0}}$ such that $\sigma(t_{0})\geq M_{n_{0}}$, then $kx(t)=kx^{*}(\sigma(t)\leq \frac{\varepsilon}{2}$ and $k_{n_{0}}x_{n_{0}}(t)\leq \frac{\varepsilon}{2}$, then $|k_{n_{0}}x_{n_{0}}-kx|\leq \varepsilon$, a contradiction to $t_{0}\in e_{n_{0}, i_{0}}$.

\begin{align*}
\frac{\varepsilon^{\prime}}{2}( W(M_{n_{0}})-W(M_{n_{0}}) )
&\geq
\int_{e_{n_{0}, i_{0}}}\left( \varphi(k_{n_{0}}x_{n_{0}}(t))
+\psi(p_{-}(kx(t)))
-k_{n_{0}}x_{n_{0}}p_{-}(kx(t))
 \right)\omega(\sigma(t))dt\\
&\geq \varepsilon^{\prime}\int_{e_{n_{0}, i_{0}}}\omega(\sigma(t))dt\\
&\geq \varepsilon^{\prime}\left(W(M_{n_{0}}+\delta)-W(M_{n_{0}})\right),
\end{align*}
a contradiction.
By the same argument, we can prove
$$
\mu\{t\in e_{i}: k_{n}x_{n}\leq r_{i}-\varepsilon\}\rightarrow 0(n\rightarrow \infty),
$$
which implies
\begin{equation*}
k_{n}x_{n}(t)-kx(t)\stackrel{\mu}{\longrightarrow}0\; on \;\cup_{i} e_{i}.
\end{equation*}

(3)\; $k_{n}x_{n}(t)-kx(t)\stackrel{\mu}{\longrightarrow}0$ on $\sigma^{-1}(\cup_{i}E_{i})$.

From the result of (1) and (2), it is easy to know
$$
k_{n}x_{n}(t)-kx(t)\stackrel{\mu}{\longrightarrow} 0
\text{~on~} G\backslash \cup_{i}\sigma^{-1}(E_{i}).
$$
Define
\begin{equation}
u_{n}=\inf_{m>n}k_{m}x_{m}, \;t\in R^{+}.
\end{equation}
Then $u_{n}\leq k_{n}x_{n}$ and
\begin{align*}
u_{n}\uparrow kx \text{~on~} R^{+}\backslash \cup_{i}\sigma^{-1}(E_{i})
\end{align*}
Thus from Lemma 14 we have
\begin{align*}
u_{n}^{*}\uparrow kx^{*} \text{~on~} R^{+}\backslash \cup_{i}E_{i}.
\end{align*}
Notice that $u_{n}\leq k_{n}x_{n}$, by Fatou Lemma we have
\begin{align*}
\liminf_{n\rightarrow\infty}\int_{R^{+}}\psi(p_{-}(k_{n}x_{n}^{*})\chi_{R^{+}\backslash \cup_{i}E_{i}})\omega(t)dt
&\geq
\liminf_{n\rightarrow\infty}\int_{R^{+}}\psi(p_{-}(u_{n}^{*})\chi_{R^{+}\backslash \cup_{i}E_{i}})\omega(t)dt\\
&\geq
\int\psi(p_{-}(kx^{*}(t))\chi_{R^{+}\backslash \cup_{i}E_{i}})\omega(t).
\end{align*}
Since $\rho_{\psi, \omega}(p_{-}(k_{n}x_{n}))\leq 1=\rho_{\psi, \omega}(p_{-}(kx^{*}))$, therefore we have
\begin{equation}
\limsup_{n\rightarrow \infty}\int\psi(p_{-}(k_{n}x_{n}^{*})\chi_{\cup_{i}E_{i}})\omega(t)dt\leq \int_{R^{+}}\psi(p_{-}(kx^{*})\chi_{\cup_{i}E_{i}})\omega(t).
\end{equation}
For every $i$ and arbitrary $\varepsilon>0$, in view of $a_{i}$ is a left extreme point of a affine interval of $\varphi$, then we have
\begin{equation*}
\mu\{t\in \sigma^{-1}(E_{i}): k_{n}x_{n}(t)\leq kx(t)-\varepsilon\}\rightarrow 0 \:(n\rightarrow \infty).
\end{equation*}
If not, there exists a subsequence of $\{x_{n}\}_{n\in N}$, still denoted as $\{x_{n}\}_{i\in N}$, and $E_{i_{0}}\in \cup_{i}E_{i}$, $\varepsilon_{0}>0$, $\delta_{0}>0$ such that
\begin{equation*}
\mu\{t\in \sigma^{-1}(E_{i_{0}}): k_{n}x_{n}(t)\leq kx(t)-\varepsilon_{0}\}\geq \delta_{0}
\end{equation*}
Since there exists $D$ large enough such that
\begin{align*}
\mu\{t\in \sigma^{-1}(E_{i_{0}})&: |k_{n}x_{n}(t)|>D\}<\frac{\delta_{0}}{4},\\
\mu\{t\in \sigma^{-1}(E_{i_{0}})&: |kx(t)|>D\}< \frac{\delta_{0}}{4}.
\end{align*}
Then the bounded closed set $E^{\prime}_{i_{0}}$ defined by
\begin{equation*}
\sigma^{-1}(E_{i_{0}}^{\prime})=\{t\in \sigma^{-1}(E_{i_{0}}): |k_{n}x_{n}(t)-kx|\geq \varepsilon_{0}, |k_{n}x_{n}(t)|\leq D, |kx(t)|\leq D\}
\end{equation*}
satisfying $\mu(\sigma^{-1}(E_{i_{0}}^{\prime}))=\mu(E_{i_{0}})\geq \frac{\delta_{0}}{2}$.
Therefore therefore exists $\varepsilon^{\prime}>0$ such that
\begin{equation*}
p(kx)\geq p(k_{n}x_{n})+\varepsilon^{\prime}
\end{equation*}
when $t\in\sigma^{-1}(E_{i_{0}}^{\prime})$.
Then there exists $\varepsilon^{\prime\prime}>0$ such that
\begin{equation*}
\varphi(k_{n}x_{n}(t))+\psi(p(kx(t)))-k_{n}x_{n}(t)p(kx(t))\geq \varepsilon^{\prime\prime}
\end{equation*}
whenever $t\in\sigma^{-1}(E_{i_{0}})$.
Select $x_{n_{0}}$ and $M_{n_{0}}$ such that
\begin{equation*}
\int_{\sigma^{-1}(E_{i_{0}}^{\prime})}\left(\varphi(k_{n}x_{n}(t))+\psi(p(kx(t)))-k_{n}x_{n}p(kx)\right)\omega(\sigma(t))dt\leq \frac{\varepsilon^{\prime\prime}}{2}(W(M_{n_{0}}+\frac{\delta_{0}}{2})
-W(M_{n_{0}}))
\end{equation*}
and
\begin{equation*}
k_{n_{0}}x_{n_{0}}(\sigma^{-1}(t))\leq \frac{\varepsilon_{0}}{2}, kx^{*}(t)\leq \frac{\varepsilon_{0}}{2}\; \:\text{when}\;t\geq M_{n_{0}}.
\end{equation*}
Thus
\begin{align*}
\frac{\varepsilon^{\prime\prime}}{2}(W(M_{n_{0}}+\frac{\delta}{2})-W(M_{n_{0}}))
&\geq
\int_{\sigma^{-1}(E_{i_{0}}^{\prime})}\left(\varphi(k_{n}x_{n}(t))+\psi(p(kx(t)))-k_{n}x_{n}p(kx)\right)\omega(\sigma(t))dt\\
&\geq \varepsilon^{\prime\prime} \int_{E_{i_{0}}^{\prime}}\omega(t)dt\\
&\geq \varepsilon^{\prime\prime}(W(M_{n_{0}}+\delta_{0})-W(M_{n_{0}}))
\end{align*}
a contradiction.
Then for every $E_{i}$ and arbitrary $\varepsilon>0$, we have
\begin{equation*}
\mu\{t\in E_{i}: k_{n}x_{n}(t)\leq kx(t)-\varepsilon\}\rightarrow\infty
\; as \; n\rightarrow\infty.
\end{equation*}
Next we will prove that for arbitrary $a_{i}\in A$ and $\varepsilon>0$, we have
\begin{equation*}
\mu\{t\in \sigma^{-1}(E_{i}): k_{n}x_{n}(t)\geq kx(t)+\varepsilon\}\rightarrow\infty
\; as \; n\rightarrow\infty.
\end{equation*}
If there exist $E_{i_{1}}$, $\delta_{1}$, $\varepsilon_{1}$ such that
\begin{equation*}
\mu\{t\in\sigma^{-1}(E_{i_{1}}): k_{n}x_{n}(t)-kx(t)\geq \varepsilon_{1} \}> \delta_{1},
\end{equation*}
Then we have
\begin{equation*}
u_{n}\geq kx+\varepsilon_{1}, \;t\in \sigma^{-1}(E_{i_{1}}).
\end{equation*}
Where $u_{n}$ is defined in formula (11).
Thus there exist $u_{0}\in\Lambda_{\varphi, \omega}^{o}$ such that
\begin{align*}
u_{n}
&\uparrow u_{0}\geq kx+\varepsilon_{1}, \;t\in\sigma^{-1}(E_{i_{1}}).\\
u_{n}^{*}
&\uparrow u^{*}_{0}\geq (kx+\varepsilon_{1})^{*}= kx^{*}+\varepsilon_{1}, \;t\in E_{i_{1}}.
\end{align*}
 From Fatou Lemma we have
\begin{align*}
\limsup_{n\rightarrow \infty}\int_{R^{+}}\psi(p_{-}(k_{n}x^{*}_{n}(t)\chi_{E_{i_{1}}}))
&\geq\liminf_{n\rightarrow \infty}\int_{R^{+}}\psi(p_{-}(k_{n}x^{*}_{n}(t)\chi_{E_{i_{1}}}))\\
&\geq\liminf_{n\rightarrow\infty}\int_{R^{+}}\psi((p_{-}(u_{n}^{*})\chi_{E_{i_{1}}})\omega(t)dt\\
&\geq\int_{R^{+}}\psi(p_{-}((u_{0}^{*})\chi_{E_{i_{1}}}))\omega(t)dt.\\
&>\int_{R^{+}}\psi(p_{-}((kx^{*}(t))\chi_{E_{i_{1}}}))\omega(t)dt.
\end{align*}
Thus
\begin{equation*}
\limsup_{n\rightarrow \infty}\int_{R^{+}}\psi(p_{-}(k_{n}x^{*}_{n}(t)\chi_{\cup_{}iE_{i}}))
>\int_{R^{+}}\psi(p_{-}((kx^{*}(t))\chi_{\cup_{i}E_{i}}))\omega(t)dt.
\end{equation*}
This is a contradiction to formula (12).
Therefore $k_{n}x_{n}-kx\stackrel{\mu}\longrightarrow 0$ as $n\rightarrow \infty$ on $\cup_{i}\sigma^{-1}(E_{i})$.
Finally, we obtain $k_{n}x_{n}\stackrel{\mu}\longrightarrow kx$ on $R^{+}$ as $n\rightarrow\infty$.

\textsl{Case 2.}
$P_{\psi, \omega}(p_{-}((kx^{*}(t))\omega))<1$ and $\theta(kx)=1$.

Since $\varphi\in\Delta_{2}$, $E_{\varphi, \omega}^{o}=\Lambda^{o}_{\varphi, \omega}$. Then under the condition that $\varphi\in\Delta_{2}$, $\theta(kx)\neq1$. This case will not happen.

\textsl{Case 3.}
If $\:\theta(kx)<1$ and $P_{\psi, \omega}(p(kx^{*})\omega)=1$, then
$\mu\{t\in R^{+}: kx^{*}(t)\in A\}=0$. 

\textsl{Subcase 3.1}
$\theta(kx)<1$, $P_{\psi, \omega}(p(kx^{*})\omega)=1$ and $\mu\{t\in R^{+}: kx^{*}(t)\in A\}=0$.

In this case, $v=p(kx(t))\omega(\sigma(t))$ is the unique supporting functional of $x$.
Assume $\{x_{n}\}_{n\in N}$ such that $\int_{R^{+}}x_{n}(t)v(t)dt\rightarrow 1$.
Take $k_{n}\in K(x_{n})$.

For every $b_{i}\in B$ we define $F_{i}$ by
$$
F_{i}=\{t\in R^{+}: kx^{*}(t)=b_{i}\}.
$$
Similarly, we can proof $k_{n}x_{n}\stackrel{\mu}{\rightarrow} kx$ on $R^{+}\backslash \cup_{i} \sigma^{-1}(F_{i})$ as $n\rightarrow \infty$.
Then we will prove $k_{n}x_{n}\stackrel{\mu}{\rightarrow}kx$ as $n\rightarrow \infty$ on $\sigma^{-1}(\cup_{i}F_{i})$.
Let
$$
y_{n}=\sup_{m>n}k_{m}x_{m}, \;m,n\in N.
$$
then $y_{n}$ is decreasing and
\begin{align*}
 y_{n}\downarrow x   \qquad \: &\text{~on~} R\backslash\cup_{i}F_{i},\\
 y_{n}\downarrow x^{\prime}>x &\text{~on~} \cup_{i}F_{i}.
\end{align*}
From Lemma 14 
\begin{align*}
y_{n}^{*}&\downarrow x^{*} \qquad \quad\;\;\; \text{~on~} R\backslash\cup_{i}F_{i},\\
y_{n}^{*}&\downarrow (x^{\prime})^{*}> x^{*} ~ \text{~on~} \cup_{i}F_{i}.
\end{align*}

By Fatou Lemma, and the fact that $p(u)$ is not decreasing and continuous on the right hand
we have
\begin{align*}
\limsup_{n\rightarrow\infty}\int_{R^{+}}\psi(p((k_{n}x_{n}^{*}(t))\chi_{R^{+}\backslash \cup_{j}F_{j}})\omega(t)dt
&\leq  \limsup_{n\rightarrow \infty} \int \psi(p(y_{n}\chi_{\cup_{j}F_{j}}))\omega(t)dt\\
&\leq \int_{R^{+}}\psi(p(kx^{*})\chi_{G\backslash \cup_{j}F_{j}})\omega(t)dt.
\end{align*}
Since $\rho_{\psi, \omega}(p(k_{n}x_{n}))= 1= \rho_{\psi, \omega}(p(kx))$, we have
\begin{equation}
\liminf_{n\rightarrow\infty}\int_{R^{+}}\psi(k_{n}x_{n}^{*}(t)\chi_{\cup_{j}F_{j}})\omega(t)dt
\geq \int_{R^{+}}\psi(p(kx^{*})\chi_{\cup_{j}F_{j}})\omega(t)dt
\end{equation}
In view of $b_{j}$ is a right extreme point of an affine interval, following the same method of case 1, we have
\begin{equation*}
\mu\{t\in \sigma^{-1}(E_{i}): k_{n}x_{n}(t)\geq kx(t)+\varepsilon\}\rightarrow 0 (n\rightarrow\infty).
\end{equation*}
If there exist $i_{0}$, $\varepsilon>0$ and $\delta_{0}>0$ such that
\begin{equation*}
\mu\{t\in \sigma^{-1}(E_{i_{0}}): k_{n}x_{n}(t)\leq kx(t)-\varepsilon_{0}\}>\delta_{0}.
\end{equation*}
Then
\begin{align*}
\liminf_{n\rightarrow\infty}\int_{R^{+}}\psi(p(k_{n}x_{n}^{*})\chi_{F_{i_{0}}})\omega(t)dt
&\leq\limsup_{n\rightarrow\infty}\int_{R^{+}}\psi(p(y_{n}^{*})\chi_{F_{i_{0}}})\omega(t)dt\\
&< \int_{R^{+}}\psi(p(kx^{*})\chi_{F_{i_{0}}})\omega(t)dt.
\end{align*}
A contradiction to formula (10).

\textsl{Case 4.}
$P_{\psi, \omega}(p(kx^{*}(t))\omega(t))<1<P_{\psi, \omega}(p(kx^{*}(t))\omega(t))$.
Select $\varepsilon_{i}$ such that
\begin{align*}
p_{-}(kx^{*}(t))+\varepsilon_{i}\leq p(kx^{*}(t)), \;\mu-a.e. on\; e_{i}
\end{align*}
and
\begin{equation*}
\int_{R^{+}\backslash \cup_{i}e_{i}}\psi(kx^{*}(t))\omega(t)dt+\int_{\cup_{i}e_{i}}\psi(kx^{*}(t)+\varepsilon_{i})\omega(t)dt=1.
\end{equation*}
Let
\begin{equation*}
v(t)=p_{-}(kx^{*}(\sigma(t)))\omega(\sigma(t))\chi_{R^{+}\backslash \sigma^{-1}(\cup_{i}e_{i})}+(p_{-}(kx^{*}(t)+\varepsilon_{i}))\omega(\sigma(t))\chi_{\sigma^{-1}(\cup_{i}e_{i})}.
\end{equation*}
Obviously, $v$ is a supporting functional of $x$.
Assume there exists a sequence $\{x_{n}\}_{n\in N}$ and $\lim_{n\rightarrow \infty}\int_{R^{+}}x(t)v(t)dt=1$. By the same method in case 1 we have
\begin{equation*}
k_{n}x_{n}(t)\stackrel{\mu}{\rightarrow} kx(t)\; on \; R^{+}\backslash \cup_{i}\sigma^{-1}(e_{i}).
\end{equation*}
Then we will prove that $k_{n}x_{n}(t)\stackrel{\mu}{\rightarrow}kx(t)$ on $\sigma^{-1}(\cup_{i}e_{i})$.
If not, assume there exists an subsequence of
$\{x_{n}\}_{n\in N}$, still named as $\{x_{n}\}_{n\in N}$, and $e_{i_{0}}$, $\varepsilon_{0}>0$, $\delta_{0}>0$ such that
\begin{equation*}
\mu\{t\in \sigma^{-1}(e_{i_{0}}): k_{n}x_{n}(t)\geq r_{i_{0}}+\varepsilon_{0}\}>\delta_{0}
\end{equation*}
Analogously, there exists $D$ large enough such that
\begin{equation*}
\sigma^{-1}(e_{i_{0}}^{\prime})=\{t\in e_{i_{0}}: |k_{n}x_{n}|<D, |kx(t)|<D, |k_{n}x_{n}-kx|>\varepsilon_{0}\}\text{~and~}
\mu (\sigma^{-1}(e_{i_{0}}^{\prime}))
>\frac{\delta_{0}}{2}.
\end{equation*}
And
\begin{equation*}
p(k_{n}x_{n})\geq p_{-}(k_{n}x_{n})>p(r_{i_{0}})=p_{-}(r_{i_{0}})+\varepsilon_{i_{0}}+a_{i_{0}}.
\end{equation*}
Thus
\begin{equation*}
\varphi(k_{n}x_{n})+\psi(p(r_{i_{0}})+\varepsilon_{i_{0}})-\int_{e_{i_{0}}}k_{n}x_{n}\left(p(r_{i_{0}})+\varepsilon_{i_{0}}\right)>\varepsilon^{\prime\prime}
\end{equation*}
There exist $n_{0}$ and $M_{n_{0}}$ such that
\begin{align*}
\int_{\sigma^{-1}(e_{n_{0}}^{\prime})}
&\left(\varphi(k_{n}x_{n})+\psi(p_{-}(r_{i_{0}}+\varepsilon_{i_{0}}))
-k_{n_{0}}x_{n_{0}}(t)(p_{-}(r_{i_{0}}+\varepsilon_{i_{0}}))\right)\omega(\sigma(t))dt\\
&\leq \varepsilon^{\prime\prime}\left(W(M_{n_{0}}+\frac{\delta_{0}}{2})-W(M_{n_{0}})\right)
\end{align*}
and
\begin{align*}
k_{n}x_{n}(\sigma^{-1}(t))
&\leq\frac{\varepsilon}{2}, \;t\geq M_{n_{0}}.\\
kx^{*}(t)
&\leq \frac{\varepsilon}{2}, \;t\geq M_{n_{0}}.
\end{align*}
Thus
\begin{align*}
\frac{\varepsilon^{\prime\prime}}{2}\left(W(M_{n_{0}}+\frac{\delta_{0}}{2})-W(M_{n_{0}})\right)
&\geq \int_{\sigma^{-1}(e^{\prime}_{i_{0}})}\left(\varphi(k_{n}x_{n})+\psi(kx(t))-k_{n}x_{n}p_{-}(r_{i_{0}}+\varepsilon_{n})\right)\omega(\sigma(t))dt\\
&\geq \varepsilon^{\prime\prime}\int_{e_{i_{0}}^{\prime}}\omega(t)dt\\
&\geq \varepsilon^{\prime\prime}\left(W(M_{n_{0}}+\frac{\delta_{0}}{2})-W(M_{n_{0}})\right).
\end{align*}
a contradiction.

\end{proof}
\subsection{Strongly exposed property in Orlicz-Lorentz space}
\begin{theorem}
$\Lambda_{\varphi, \omega}^{o}$ has strongly exposed property if and only if\\
(1)$\varphi\in\Delta_{2}\cap\nabla_{2}$.\\
(2)$\varphi(u)$ is strictly convex. \\
\end{theorem}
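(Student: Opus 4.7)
The approach is to apply the preceding characterization of strongly exposed points (the theorem with the four conditions for $x$ with $K(x)\neq\emptyset$) pointwise at every $x\in S(\Lambda_{\varphi,\omega}^{o})$ and translate the four pointwise requirements into global hypotheses on $\varphi$. Under $\varphi\in\nabla_{2}$ we have $\psi\in\Delta_{2}$ real-valued, forcing $\lim_{u\to\infty}\varphi(u)/u=\infty$, so $\varphi$ is an $N$-function and by Lemma 3(1) $K(x)\neq\emptyset$ for every $x\in S(\Lambda_{\varphi,\omega}^{o})$. Thus the $K(x)=\emptyset$ regime never arises under the hypotheses and the previous theorem applies at every point of the unit sphere.

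\textbf{Sufficiency.} Fix $x\in S(\Lambda_{\varphi,\omega}^{o})$. Condition (1) of the previous theorem is immediate from $\varphi\in\Delta_{2}$. Strict convexity of $\varphi$ gives $S=R^{+}$ and $A=A'=B=B'=\emptyset$, hence $\mu\{t:kx(t)\notin S\}=0$ and $\mu\{t:kx^{*}(t)\in A'\cup B'\}=0$, establishing the second half of (2) and all of (4). Strict convexity also makes $p$ strictly increasing, so the map $k\mapsto\int_{0}^{\infty}\psi(p(kx^{*}))\omega$ is strictly monotone where it is finite, forcing $k^{*}=k^{**}$ and completing (2). For (3), pick any supporting functional $v\in S(\mathcal{M}_{\psi,\omega})$; then $P_{\psi,\omega}(v)\le 1$, and using the representation $P_{\psi,\omega}(f)=\int\psi(f^{*}/\omega^{f^{*}})\omega^{f^{*}}$ together with the $\Delta_{2}$ property of $\psi$ one obtains $P_{\psi,\omega}((1+\delta)v)\le C\cdot P_{\psi,\omega}(v)<\infty$ for some $\delta>0$. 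Hence the previous theorem yields that $x$ is strongly exposed.

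\textbf{Necessity.} Conversely, assume $\Lambda_{\varphi,\omega}^{o}$ has the strongly exposed property. Applying the previous theorem at any single $x\in S(\Lambda_{\varphi,\omega}^{o})$ yields $\varphi\in\Delta_{2}$ at once. If $\varphi$ is not strictly convex, choose an affine interval $[a_{i},b_{i}]$ and construct a unit-norm $x$ such that $kx^{*}$ takes values in $[a_{i},b_{i}]$ on a set of positive measure; depending on whether the values fall in $(a_{i},b_{i})$ (giving $kx(t)\notin S$ on a positive-measure set, so condition (2) fails) or at the endpoints in $A'\cup B'$ (giving direct failure of condition (4)), one contradicts the assumption that $x$ is strongly exposed. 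If $\varphi\notin\nabla_{2}$, i.e.\ $\psi\notin\Delta_{2}$, select $u_{n}\uparrow\infty$ with $\psi((1+1/n)u_{n})\ge 2^{n}\psi(u_{n})$ and build a unit-norm $x$ whose essentially unique supporting functional $v$ (essentially unique because strict convexity forces $v^{*}=p(kx^{*})\omega$) attains value $\approx u_{n}$ on disjoint sets of carefully chosen measure. Then $P_{\psi,\omega}((1+\delta)v)=\infty$ for every $\delta>0$, contradicting condition (3).

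\textbf{Main obstacle.} The analytic crux is the necessity of $\nabla_{2}$: one must construct a unit-norm $x$ whose essentially unique supporting functional $v$ saturates $\psi$ at levels exploiting the failure of $\Delta_{2}$, while simultaneously maintaining $\|v\|_{\mathcal{M}_{\psi,\omega}}=1$. The delicate interplay between the weight $\omega$, the chosen levels $u_{n}$, and the level function $\omega^{v^{*}}$ appearing in the formula for $P_{\psi,\omega}$ is where most of the estimation must take place; the preceding characterization $p_{-}(kx^{*})\omega\le v^{*}\le p(kx^{*})\omega$ of supporting functionals in $\mathcal{M}_{\psi,\omega}$ is the key tool that makes this construction tractable.
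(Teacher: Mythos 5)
Your plan coincides with the paper's: the paper likewise derives everything except the necessity of $\nabla_{2}$ from the pointwise characterization theorem, and proves that necessity by exactly your construction --- choosing $u_{n}\uparrow\infty$ with $\psi((1+\tfrac{1}{n})u_{n})\geq 2^{n}\psi(u_{n})$, building a norm-one $x$ supported on level intervals of $\omega$ so that its supporting functional $v$ satisfies $P_{\psi,\omega}(v)=1$ while $P_{\psi,\omega}((1+\varepsilon)v)=\infty$ for every $\varepsilon>0$, and thereby violating condition (3). The only imprecision is your aside that $\nabla_{2}$ makes $\varphi$ an $N$-function (it only forces $\lim_{u\to\infty}\varphi(u)/u=\infty$, not the condition at zero, though that is all that is needed for $K(x)\neq\emptyset$); otherwise the two arguments are essentially identical.
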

\begin{proof}
We only need to prove the necessity of $\varphi\in \nabla_{2}$.
If $\varphi\notin\nabla_{2}$,
there exists a sequence $\{u_{n}\}_{n\in N}$
such that $u_{k}\uparrow\infty$ and $\psi((1+\frac{1}{k})u_{k})\geq 2^{k}\psi(u_{k})$. Let $b=\sum_{k=1}^{\infty}\frac{1}{2^{k}\psi(u_{k})}\leq \frac{1}{\psi(u_{1})}$. Select $t_{0}>0$ such that $b=\int_{0}^{t_{0}}\omega(t)dt$. Then we can find a sequence $\{t_{k}\}$, $t_{k}$ decrease to zero and
$$
\int_{t_{k}}^{t_{k-1}}\omega(t)dt=\frac{1}{2^{k}\psi(u_{k})}.
$$
Let
$$
f=\sum_{i=1}^{\infty}u_{k}\omega(t)\chi_{[t_{k}, t_{k-1}]}.
$$
Then every $[t_{k}, t_{k-1}]$ is a maximal level interval and $R[t_{k}, t_{k-1}]=u_{k}$. Therefore
\begin{align*}
P_{\psi, \omega}(f)&=\sum_{k=1}^{\infty}\psi(u_{k})\int_{[t_{k}, t_{k-1}]}\omega(t)dt=1,\\
P_{\psi, \omega}((1+\varepsilon)f)&>\sum_{k=1}^{\infty}2^{k}\psi(u_{k})\int_{[t_{k}, t_{k-1}]}\omega(t)dt=\infty.
\end{align*}
Then we have $\|f\|_{\mathcal{M}_{\psi, \omega}}=1$. Let $\{v_{n}\}$ satisfying
\begin{equation*}
u_{n}\in [p_{-}(v_{n}), p(v_{n})]
\end{equation*}
and define
$$
x(t)=\sum_{k=1}^{\infty}v_{k}\chi_{[t_{k}, t_{k-1}]}.
$$
Then $y\in Grad(\frac{x}{\|x\|_{\varphi, \omega}^{o}})$.
Let $A_{n}=\{t\in R^{+}: n<|y(t)|\leq 2n\}$.
 It is easy to prove $\|y\chi_{A_{n}}\|_{\mathcal{M}_{\psi, \omega}}=1$. Since $\|y\chi_{A_{n}}\|_{\mathcal{M}_{\psi,
\omega}}=\sup\left\{\int_{A_{n}} z(t)y(t)dt : \|z\|_{\mathcal{M}_{\psi, \omega}}=1\right\}$, there exist sequence $\{z_{n}\}_{n\in N}$, $z_{n}=z_{n}\chi_{A_{n}}$ and $\int_{A_{n}}z_{n}(t)y(t)dt\rightarrow 1$.
Since
$$
\|\frac{x}{\|x\|}-z_{n}\|_{\varphi, \omega}^{o}\geq \frac{\|x_{\{t\in R^{+}: |x(t)|<n\}}\|_{\varphi, \omega}^{o}}{\|x\|_{\varphi, \omega}^{o}}>0,
$$
then $x$ is not an exposed point, and $\Lambda_{\varphi, \omega}^{o}$ does not have strongly exposed property.
\end{proof}
\nocite{Foralewski2013605}
\nocite{GRZAExposed992}
\nocite{Allen1978DualsOL}
\nocite{Kaminska2009lattice}

\nocite{Halperin195305}\nocite{Ryff1970449}\nocite{Wu1999235}
\nocite{WangSEP2023}
\nocite{Kaminska2013NewFF}


\section{Acknowledgement}
The authors gratelfully acknowledge financial support from China Scholarship Council and the National Natural Science Foundation of P. R. China (Nos. 11971493).

\bibliography{sn-bibliography}

\bigskip
\begin{appendices}
\end{appendices}
\end{document}